\begin{document}

\newtheorem{tm}{Theorem}[section]
\newtheorem{prop}[tm]{Proposition}
\newtheorem{defin}[tm]{Definition}
\newtheorem{coro}[tm]{Corollary}

\newtheorem{lem}[tm]{Lemma}
\newtheorem{assumption}[tm]{Assumption}

\newtheorem{rk}[tm]{Remark}

\newtheorem{nota}[tm]{Notation}
\numberwithin{equation}{section}

\newcommand{\stk}[2]{\stackrel{#1}{#2}}
\newcommand{\dwn}[1]{{\scriptstyle #1}\downarrow}
\newcommand{\upa}[1]{{\scriptstyle #1}\uparrow}
\newcommand{\nea}[1]{{\scriptstyle #1}\nearrow}
\newcommand{\sea}[1]{\searrow {\scriptstyle #1}}
\newcommand{\csti}[3]{(#1+1) (#2)^{1/ (#1+1)} (#1)^{- #1
 / (#1+1)} (#3)^{ #1 / (#1 +1)}}
\newcommand{\RR}[1]{\mathbb{#1}}

\newcommand{ \bl}{\color{blue}}
\newcommand {\rd}{\color{red}}
\newcommand{ \bk}{\color{black}}
\newcommand{ \gr}{\color{OliveGreen}}
\newcommand{ \mg}{\color{RedViolet}}

\newcommand{\ep}{\varepsilon}
\newcommand{\rr}{{\mathbb R}}
\newcommand{\alert}[1]{\fbox{#1}}

\newcommand{\eqd}{\sim}
\def\R{{\mathbb R}}
\def\N{{\mathbb N}}
\def\Q{{\mathbb Q}}
\def\C{{\mathbb C}}
\def\l{{\langle}}
\def\r{\rangle}
\def\t{\tau}
\def\k{\kappa}
\def\a{\alpha}
\def\la{\lambda}
\def\De{\Delta}
\def\de{\delta}
\def\ga{\gamma}
\def\Ga{\Gamma}
\def\ep{\varepsilon}
\def\eps{\varepsilon}
\def\si{\sigma}
\def\Re {{\rm Re}\,}
\def\Im {{\rm Im}\,}
\def\E{{\mathbb E}}
\def\P{{\mathbb P}}
\def\Z{{\mathbb Z}}
\def\D{{\mathbb D}}
\newcommand{\ceil}[1]{\lceil{#1}\rceil}

%\allowdisplaybreaks
\title{Spreading Speeds and Traveling waves of a parabolic-elliptic chemotaxis system with  logistic source on $\mathbb{R}^N$}

\author{
Rachidi B. Salako and Wenxian Shen  \\
Department of Mathematics and Statistics\\
Auburn University\\
Auburn University, AL 36849\\
U.S.A. }

\date{}
\maketitle
%\begin{document}

\begin{abstract}
The current paper is devoted to the study of  spreading speeds and traveling wave solutions of the following parabolic-elliptic chemotaxis system,\begin{equation*}\label{IntroEq0-2}\begin{cases}u_{t}=\Delta{u}-\chi\nabla\cdot(u\nabla{v})+u(1-u),\quad{x}\in\mathbb{R}^N\\{0}=\Delta{v}-v+u,\quad{x}\in\mathbb{R}^N,\end{cases}\end{equation*}
where $u(x,t)$\,\,{}represents the population density of a mobile species and\,\,{}$v(x,t)$\,\,{}represents the population density of a chemoattractant, and\,\,{}$\chi$\,\,{}represents the chemotaxis sensitivity. We first give a detailed study in the case\,\,{}$N=1$.\,\,{}In this case, it has been shown in an earlier work by the authors of the current paper that, when\,\,{}$0<\chi<1$,\,\,{}for every nonnegative uniformly continuous and bounded function\,\,{}$u_0(x)$,\,\,{}the system has a unique globally bounded classical solution\,\,{}$(u(x,t;u_0),v(x,t;u_0))$\,\,{}with initial condition\,\,{}$u(x,0;u_0)=u_0(x)$.\,\,{}Furthermore, it was shown that, if\,\,{}$0<\chi<\frac{1}{2}$,\,\,{}then the constant steady-state solution\,\,{}$(1,1)$\,\,{}is asymptotically stable with respect to strictly positive perturbations. In the current paper, we show that if\,\,{}$0<\chi<1$,\,\,{}then there are nonnegative constants\,\,{}$c_{-}^*(\chi)\leq c_+^*(\chi)$\,\,{}such that for every nonnegative initial function\,\,{}$u_0(\cdot)$\,\,{}with non-empty and compact support\,\,{}${\rm{supp}}(u_0)$,$$\lim_{t\to\infty}\sup_{|x|\leq{ct}}\big[|u(x,t;u_0)-1|+|v(x,t;u_0)-1|\big]=0\quad\forall\,\,{0}<c<c_{-}^*(\chi)$$and$$\lim_{t\to\infty}\sup _{|x|\geq{ct}}\big[u(x,t;u_0)+v(x,t;u_0)\big]=0\quad\forall\,\,{c}>c_{+}^*(\chi).$$We also show that if $0<\chi<\frac{1}{2}$, there is a positive constant\,\,{}$c^*(\chi)$\,\,{}such that for every\,\,{}$c\geq{c}^*(\chi)$,\,\,{}the system  has a traveling wave solution\,\,{}$(u(x,t),v(x,t))$\,\,{}with speed $c$ and connecting\,\,{}$(1,1)$\,\,{}and\,\,{}$(0,0)$,\,\,{}that is,\,\,{}$(u(x,t),v(x,t))=(U(x-ct),V(x-ct))$\,\,{}for some functions\,\,{}$U(\cdot)$\,\,{}and\,\,{}$V(\cdot)$\,\,{}satisfying\,\,{}$(U(-\infty),V(-\infty))=(1,1)$\,\,{}and\,\,{}$(U(\infty),V(\infty))=(0,0)$.\,\,{}Moreover, we show that$$\lim_{\chi\to{0}}c^*(\chi)=\lim_{\chi\to{0}}c_+^*(\chi)=\lim_{\chi\to{0}}c_-^*(\chi)=2.$$We then consider the extensions of the results in the case\,\,{}$N=1$\,\,{}to the case\,\,{}$N\geq2$.
\end{abstract}

\medskip
\noindent{\bf Key words.} Parabolic-elliptic chemotaxis system, logistic source, spreading speed, traveling wave solution.

\medskip
\noindent {\bf 2010 Mathematics Subject Classification.}  35B35, 35B40, 35K57, 35Q92, 92C17.

\section{Introduction}
The origin of  chemotaxis models was introduced by Keller and Segel  (see \cite{KeSe1}, \cite{KeSe2}). The following is a general Keller-Segel model
for the time evolution of both the density $u(x,t)$ of a mobile species and the density $v(x,t)$ of a  chemoattractant,
\begin{equation}\label{IntroEq0}
\begin{cases}
u_{t}=\nabla\cdot (m(u)\nabla u- \chi(u,v)\nabla v) + f(u,v),\quad   x\in\Omega \\
\tau v_t=\Delta v + g(u,v),\quad  x\in\Omega
\end{cases}
\end{equation}
complemented with certain boundary condition on $\partial\Omega$ if $\Omega$ is bounded, where $\Omega\subset \R^N$ is an open domain, $\tau\ge 0$ is a non-negative constant linked to the speed of diffusion of the chemical, the function $\chi(u,v)$ represents  the sensitivity with respect to chemotaxis,  and the functions $f$ and $g$ model the growth of the mobile species and the chemoattractant, respectively.

Among the central problems about \eqref{IntroEq0} are
 global existence of classical/weak solutions with given initial  functions;  finite-time blow-up;  pattern formation;  existence, uniqueness, and stability of certain special solutions;  spatial spreading and front propagation dynamics when the domain is a whole space; etc.

In the present paper, we restrict ourselves to  the case that $\tau=0$, which is supposed to model the situation when the chemoattractant diffuses very quickly. System \eqref{IntroEq0} with $\tau=0$ reads as
\begin{equation}\label{IntroEq0-1}
\begin{cases}
u_{t}=\nabla\cdot (m(u)\nabla u- \chi (u,v)\nabla v) + f(u,v),\quad  x\in\Omega \\
0=\Delta v + g(u,v),\quad  x\in\Omega
\end{cases}
\end{equation}
complemented with certain boundary condition on $\partial\Omega$ if $\Omega$ is bounded.

Global existence and asymptotic behavior of solutions of \eqref{IntroEq0-1} on bounded domain $\Omega$ have been extensively studied by many authors.
The reader is referred to \cite{BBTW}, \cite{DiNa}, \cite{GaSaTe}, \cite{TeWi},  \cite{WaMuZh}, \cite{Win}, \cite{win_jde}, \cite{win_JMAA_veryweak}, \cite{win_arxiv}, \cite{win_JNLS}, \cite{YoYo}, \cite{ZhMuHuTi},  and references
 therein for the studies of \eqref{IntroEq0-1} on bounded domain with Neumann or Dirichlet  boundary conditions and with $f(u,v)$ being logistic type source function or $0$ and  $m(u)$, $\chi(u,v)$, and $g(u,v)$ being  various kinds of functions.

 There are also several studies of \eqref{IntroEq0-1} when $\Omega$ is the whole space $\R^N$ and $f(u,v)=0$ (see \cite{DiNaRa}, \cite{KKAS} \cite{NAGAI_SENBA_YOSHIDA}, \cite{SuKu}, \cite{Sug}). For example,
in the  case of   $m(u)\equiv 1$, $\chi(u,v)=\chi u$,  $f(u,v)=0$, and $g(u,v)=u-v$, where  $\chi$ is a positive constant, it is known that blow-up occurs if
either N=2 and the total initial population mass is large enough, or $N\ge 3$ (see  \cite{BBTW}, \cite{DiNaRa},  \cite{NAGAI_SENBA_YOSHIDA} and  references therein).  However, there is not much study of \eqref{IntroEq0-1} when $\Omega=\R^N$
and $f(u,v)\not =0$.

In the current paper, we will study  spatial spreading and  front propagation dynamics of
 \eqref{IntroEq0-1}  with the following choices, $\Omega=\R^N$,   $m(u)=1$, $\chi(u,v)=\chi u$ with $\chi$ being a nonnegative constant,
 $f(u,v)=u(1-u)$, and $g(u,v)=u-v$. With such choices, \eqref{IntroEq0-1} becomes
 \begin{equation}\label{IntroEq0-2-new}
\begin{cases}
u_{t}=\Delta u- \chi \nabla \cdot (u \nabla v) + u(1-u),\quad x\in\R^N\cr
0=\Delta u-v+u, \quad x\in\R^N.
\end{cases}
\end{equation}
We will provide a detailed study on the spatial spreading and  front propagation dynamics of \eqref{IntroEq0-2-new}
in the case $N=1$ and then discuss the extensions of the obtained results for the case $N=1$ to $N\ge 2$.
Here are three main reasons for doing that. First, the study of traveling wave solutions on $\R^N$ reduces to the study of traveling wave solutions on $\R$. Second,  we can get some nicer results in the case $N=1$ (compare Theorem B(i) and Theorem D(i)). Third, it is for the simplicity in notations.

Consider \eqref{IntroEq0-2-new} with $N=1$, that is,
\begin{equation}\label{IntroEq0-2}
\begin{cases}
u_{t}=u_{xx}- \chi (u v_x)_x + u(1-u),\quad x\in\R\cr
0=v_{xx}-v+u, \quad x\in\R.
\end{cases}
\end{equation}
In the very recent work \cite{SaSh}, the authors of the current paper studied  the global existence of classical solutions with various given initial functions and the asymptotic behavior of global positive solutions of \eqref{IntroEq0-2} (actually, \cite{SaSh} considered a little more general system, namely, \eqref{IntroEq0-2-new} with $u(1-u)$ being replaced by $u(a-bu)$).
Let
\begin{equation}
\label{unif-cont-space}
C_{\rm unif}^b(\R)=\{u\in C(\R)\,|\, u(x)\,\,\text{is uniformly continuous in}\,\, x\in\R\,\, {\rm and}\,\, \sup_{x\in\R}|u(x)|<\infty\}
\end{equation}
equipped with the norm $\|u\|_\infty=\sup_{x\in\R}|u(x)|$.
For given $0<\nu<1$ and $0<\theta<1$, let
\begin{equation}
\label{holder-cont-space}
C^{\nu}_{\rm unif}(\R)=\{u\in C_{\rm unif}^b(\R)\,|\, \sup_{x,y\in\R,x\not =y}\frac{|u(x)-u(y)|}{|x-y|^\nu}<\infty\}
\end{equation}
equipped with the norm $\|u\|_{C^\nu_{\rm unif}}=\sup_{x\in\R}|u(x)|+\sup_{x,y\in\R,x\not =y}\frac{|u(x)-u(y)|}{|x-y|^\nu}$, and
\begin{align*}
C^{\theta}((t_1,t_2),C^\nu_{\rm unif}(\R))=\{ u(\cdot)\in C((t_1,t_2),C^{\nu}_{\rm unif}(\R))\,|\, u(t)\,\, \text{is locally H\"{o}lder continuous with exponent}\,\, \theta\}.
\end{align*}
Among other, the following are proved in \cite{SaSh}.

\medskip

\noindent (i) {\it For any $u_0 \in C_{\rm unif}^{b}(\R)$ with  $u_0 \geq 0$,  there exists $T_{\max}(u_0) \in (0,\infty]$  such that \eqref{IntroEq0-2}  has a unique non-negative classical solution $(u(x,t;u_0),v(x,t;u_0))$ on $[0,T_{\max}(u_0))$ satisfying that $\lim_{t\to 0+}u(\cdot,t;u_0)=u_0$ in the
$C_{\rm unif}^b(\R)$-norm,
$$
u(\cdot,\cdot;u_0) \in C([0, T_{\max}(u_0) ), C_{\rm unif}^b(\R) )\cap C^1((0,T_{\max}(u_0)),C_{\rm unif}^b(\R))
$$
and
$$
u(\cdot,\cdot;u_0),\,\, \partial_{x} u(\cdot,\cdot),\,\, \partial^2_{x x} u(\cdot,\cdot),\,\, \partial_t u(\cdot,\cdot;u_0)\in C^\theta((0,T_{\max}(u_0)),C^\nu_{\rm unif}(\R))
$$
for  $0<\nu\ll 1$.
Moreover, if $T_{\max}(u_0)< \infty,$ then
$\limsup_{t \to T_{\max}(u_0)}   \Big\| u(\cdot,t;u_0) \Big\|_{\infty}  =\infty.$ Furthermore, if $0<\chi<1$, then $T_{\max}=+\infty$ and the solution $(u(\cdot,\cdot,u_{0}),v(\cdot,\cdot,u_{0}))$ is globally bounded.
}

\smallskip

\noindent (ii) {\it Suppose that $u_{0}\in C_{\rm uinf}^{b}(\R)$ with $\inf_{x\in\R}u_{0}(x)>0.$ If $0<\chi<\frac{1}{2}$,
then the unique global classical solution $(u(x,t;u_0),v(x,t;u_0))$ of \eqref{IntroEq0-2} with $u(x,0;u_0)=u_0(x)$  satisfies that
$$
\|u(\cdot,t;u_0)-1\|_{\infty} + \|v(\cdot,t;u_0)-1\|_{\infty}\to 0 \ \text{as} \ t\to \infty.
$$
}

\vspace{-0.1in}\noindent (iii)  {\it  Assume that $0<\chi< \frac{2}{3+\sqrt{2}}$. For any $u_{0}\in C_{\rm unif}^b(\R)$ with $u_0(x)\ge 0$  and ${\rm supp}(u_0)$ being non-empty and compact, there are  $c_{\rm low}^*(u_0)$ and $c_{\rm up}^*(u_0)$ with $0<c_{\rm low}^*(u_0)\le c_{\rm up}^*(u_0)$ such that  the   unique global classical solution $(u(x,t;u_0)$, $v(x,t;u_0))$ of \eqref{IntroEq0-2}  satisfies that
\begin{equation}
\label{spreading-eq1}
\lim_{t\to\infty}\Big[\sup_{|x|\leq ct}|u(x,t;u_0)-1|+\sup_{|x|\leq ct}|v(x,t;u_0)-1|\Big]=0\quad \forall\,\, 0\leq c < c_{\rm low}^{\ast}(u_0),
\end{equation}
\begin{equation}
\label{spreading-eq2}
\lim_{t\to\infty}\Big[\sup_{|x|\geq ct}u(x,t;u_0)+ \sup_{|x|\geq ct}v(x,t;u_0)\Big]=0\quad \forall\,\, c > c_{\rm up}^{\ast}(u_0).
\end{equation}
}

We point out that, considering a chemotaxis model on the whole
space, it is important to study the spatial spreading and
propagating properties of the mobile species in the model. Traveling
wave solutions and spatial spread speeds are among those used to
characterize such properties. There are many studies on traveling
wave solutions of various types of chemotaxis models, see, for
example, \cite{AiHuWa, AiWa, FuMiTs, HoSt, LiLiWa, MaNoSh,
NaPeRy,Wan}, etc. In particular, the reader is referred to the
review paper \cite{Wan}.

The limit properties stated in (iii) in the above  reflect some
spatial spreading feature of the mobile species in
\eqref{IntroEq0-2}. Note that in \cite{NaPeRy}, the authors studied
traveling wave solutions of \eqref{IntroEq0-2} and proved that for
any $0<\chi<1$, there is a $c_*\in [2,2+\frac{\chi}{1-\chi}]$ such
that \eqref{IntroEq0-2} has a traveling wave solution connecting
$(1,1)$ and $(0,0)$ with speed $c_*$  (see \cite[Theorem
1.1]{NaPeRy}).  Besides the above mentioned results, up to our best
knowledge, there is no other existing results on the spatial
spreading and front propagation dynamics of \eqref{IntroEq0-2}.

In the absence of the chemotaxis
(i.e. $\chi=0$), the first equation in  \eqref{IntroEq0-2} becomes the following scalar reaction diffusion equation,
\begin{equation}
\label{fisher-eq}
u_{t}=u_{xx}+ u(1-u),\quad  x\in\R,
\end{equation}
which is referred to as Fisher or KPP equations due to  the pioneering works by Fisher (\cite{Fis}) and Kolmogorov, Petrowsky, Piscunov
(\cite{KPP}) on the spreading properties of \eqref{fisher-eq}. The spatial spreading and  front propagation dynamics of \eqref{fisher-eq} is well understood.
For example,
it follows from the works \cite{Fis}, \cite{KPP}, and \cite{Wei1}  that $c^*_{\rm low}(u_0)$ and $c^*_{\rm up}(u_0)$ can be chosen so that $c^{\ast}_{\rm low}(u_0)=c^{\ast}_{\rm up}(u_0)=2$ for any nonnegative $u_0\in C_{\rm unif}^b(\R^N)$ with ${\rm supp}(u_0)$ being not empty and compact
 ($c^*:=2$ is called the {\it spatial spreading speed} of \eqref{fisher-eq} in literature), and that \eqref{fisher-eq} has traveling wave solutions $u(t,x)=\phi(x-ct)$  connecting $1$ and $0$ (i.e.
$(\phi(-\infty)=1$, $\phi(\infty)=0)$) for all speeds $c\geq c^*$ and has no such traveling wave
solutions of slower speed. Moreover, the stability of traveling wave solutions of \eqref{fisher-eq} connecting $1$ and $0$ has also been proved
(see \cite{Bra}, \cite{Sat}, \cite{Uch}, etc.).
 Since the pioneering works by  Fisher \cite{Fis} and Kolmogorov, Petrowsky,
Piscunov \cite{KPP},  a huge amount research has been carried out toward the front propagation dynamics of
  reaction diffusion equations of the form,
\begin{equation}
\label{general-fisher-eq}
u_t=\Delta u+u f(t,x,u),\quad x\in\R^N,
\end{equation}
where $f(t,x,u)<0$ for $u\gg 1$,  $\partial_u f(t,x,u)<0$ for $u\ge 0$ (see \cite{Berestycki1, BeHaNa1, BeHaNa2, Henri1, Fre, FrGa, LiZh, LiZh1, Nad, NoRuXi, NoXi1, She1, She2, Wei1, Wei2, Zla}, etc.).

  When $\chi> 0$, the study of \eqref{IntroEq0-2} is much difficult because of the lack of comparison principle.
The objective of this paper is to  further investigate the spreading feature of \eqref{IntroEq0-2} and to study
the existence of traveling wave solution of \eqref{IntroEq0-2} connecting $(1,1)$ and $(0,0)$.

A {\it traveling wave solution} of \eqref{IntroEq0-2} connecting $(1,1)$ and $(0,0)$  with speed $c$ is an entire solution $(u(x,t),v(x,t))$ satisfying that $(u(x,t),v(x,t))=(U(x-ct), V(x-ct))$ for
some continuous function $(U(\cdot),V(\cdot))\in C_{\rm unif}^b(\R)\times C_{\rm unif}^b(R)$ with $U(-\infty)=1$ and $U(\infty)=0$.

Observe that \eqref{IntroEq0-2} is equivalent to
\begin{equation}
\label{reduced-main-eq}
\begin{cases}
u_{t}=u_{xx}-\chi u_{x}v_{x} + u(1-\chi v-(1-\chi)u),\quad x\in\R\\
0=v_{xx}-v+u, \quad x\in\R.
\end{cases}
\end{equation}
 Observe also that the function $(0,1)\ni \mu \mapsto \frac{\mu(\mu +\sqrt{1-\mu^2})}{1-\mu^{2}}$ is strictly
  increasing, continuous and satisfies
$$
 \lim_{\mu\to 0^+}\frac{\mu(\mu +\sqrt{1-\mu^2})}{1-\mu^{2}} =0\quad \text{and}\quad \lim_{\mu\to 1^-}\frac{\mu(\mu +\sqrt{1-\mu^2})}{1-\mu^{2}}=\infty.
$$
 Hence the Intermediate Value Theorem implies that that for any $\chi\in (0,1)$, there is a unique $\mu^{*}\in(0, 1)$
 such that
\begin{equation}
\label{mu-star-new-eq}
 \frac{\mu^*(\mu^*
+\sqrt{1-\mu^{*2}}) }{1-\mu^{*2}}=\frac{1-\chi}{\chi}.
\end{equation}
We may denote $\mu^*$ satisfying \eqref{mu-star-new-eq} by $\mu^*(\chi)$ to indicate its dependence on $\chi$.
For given $\chi\in(0,1)$, let
 \begin{equation}
 \label{c-star-eq}
 c^*(\chi)=\mu^*(\chi)+\frac{1}{\mu^*(\chi)}.
 \end{equation}

 We prove the following theorem on traveling wave solutions of
\eqref{IntroEq0-2} or \eqref{reduced-main-eq}.

\medskip

\noindent {\bf Theorem A.} {\it Assume that $0< \chi<\frac{1}{2}$.
Then  for every $c\geq c^*(\chi)$, \eqref{reduced-main-eq} has a
traveling wave solution $(u(x,t),v(x,t))=(U(x-ct),V(x-ct))$ with
speed $c$ and connecting $(1,1)$ and $(0,0)$. Moreover,
\begin{equation}\label{eq-existence-tv-sol-and-spreading-speed}
\lim_{x\to \infty}\frac{U(x)}{e^{-\mu x}}=1,
\end{equation}
where $\mu$ is the only solution of the equation
$c=\mu+\frac{1}{\mu}$ in $(0, 1)$. }

\begin{rk} \label{remark-on-theorem-A}
\begin{itemize}
\item[(i)] By the definition of $c^*(\chi)$, it is easy to see that
\begin{equation}
\label{tv-aux-eq} c^*(\chi)> 2\quad {\rm and}\quad \lim_{\chi\to
0^+}c^*(\chi)=2.
\end{equation}
Hence, as $\chi\to 0+$,  $c^*(\chi)$ converges to the minimal wave
speed (i.e. $2$)  of \eqref{fisher-eq}.

\item[(ii)] Let $0<\chi<\frac{1}{2}$ and
\vspace{-0.1in}\begin{align*}
c_{\min}^*(\chi)=\inf \{c(\chi)\,|\, &\forall\,\,  c\ge c(\chi), \,\, \text{\eqref{reduced-main-eq} has a traveling wave solution}\,\, (u,v)=(U(x-ct),V(x-ct))\\
& {\rm with}\,\, (U(-\infty),V(-\infty))=(1,1),\,\, (U(\infty),V(\infty))=(0,0)\}.
\end{align*}
Theorem A shows that $c_{\rm min}^*(\chi)$ exists and
$c_{\min}^*(\chi)\le c^*(\chi)$. It remains open whether
$c_{\min}^*(\chi)\ge 2$. It also remains open whether
\eqref{reduced-main-eq} has no traveling wave solutions with speed
$c<c_{\rm min}^*(\chi)$ and  connecting $(1,1)$ and $(0,0)$. These
questions reflect the effect of chemotaxis on the wave front
dynamics and are very interesting.

\item[(iii)] The stability and uniqueness of traveling wave solutions of \eqref{reduced-main-eq} connecting
$(1,1)$ and $(0,0)$ is also a very interesting problem. We believe that the limit behavior described in
\eqref{eq-existence-tv-sol-and-spreading-speed} would play a role in the study of this problem.

\item[(iv)] As it is pointed out in the above, the authors in \cite{NaPeRy} proved that for any
$0<\chi<1$,  there is $c_*\in [2, 2+\frac{\chi}{1-\chi}]$ such that
\eqref{reduced-main-eq} has  a  traveling wave solution
 with speed $c_*$ and connecting $(1,1)$ and $(0,0)$. When
 $0<\chi<\frac{1}{2}$, the result in Theorem A  and the result in \cite{NaPeRy} complements
 each other.
 It is interesting to know whether $c^*_{\rm min}(\chi)=c_*$ in
 this case. When $\frac{1}{2}\le \chi<1$, it remains open whether
 \eqref{reduced-main-eq} has traveling wave solutions with sufficiently large speed $c$ and connecting
 $(1,1)$ and $(0,0)$.
 \item[(v)]  Suppose that the logistic source function is replaced by $f(u)=u(a-bu)$ with $a>0$ and $b>0$. For any given $0<\chi<\frac{b}{2}$, let
$\mu^*(\chi)$ be defined by
$$
\mu^*(\chi)=\sup\{\mu\,|\, 0<\mu<\min\{1,\sqrt a\},\,\,\,  \frac{\mu (\mu+\sqrt {1-\mu^2})}{1-\mu^2}\le \frac{b-\chi}{\chi}\}.
$$
Let
$$
c^*(\chi)=\mu^*(\chi)+\frac{a}{\mu^*(\chi)}.
$$
Similarly, we can prove that for any  $c> c^*(\chi)$ ($c$ can also equal $c^*(\chi)$ when $a\ge 1$), \eqref{reduced-main-eq} has a traveling wave solution $(u,v)=(U(x-ct),V(x-ct))$ with speed $c$ connecting the constant solutions $(\frac{a}{b},\frac{a}{b})$ and $(0,0)$. Moreover,
$$
\lim_{\chi\to 0+}c^{*}(\chi)=\begin{cases}
2\sqrt{a}\qquad \qquad  \qquad\ \text{if} \quad 0<a\leq 1\cr
1+a\qquad\qquad\qquad \text{if} \quad a>1.
\end{cases}
\quad \text{and}\quad \lim_{x\to\infty}\frac{U(x)}{e^{-\mu x}}=1,
$$
where $\mu$ is the only solution of the equation $\mu+\frac{a}{\mu}=c$ in the interval $(0\ ,\ \min\{\sqrt{a}, 1\}).$
\end{itemize}
\end{rk}

To state our main results on spreading speeds for \eqref{IntroEq0-2}, we first introduce some standing notations.
Let
$$
C_c^+(\R)=\{u\in C_{\rm unif}^b(\R)\,|\, u(x)\ge 0,\,\, {\rm supp}(u)\,\,\, \text{is non-empty and compact}\}.
$$
Let
$$
C_{-}^*(\chi)=\{c^*_->0\,|\,
 \lim_{t\to\infty} \sup_{|x|\le ct} \big[|u(x,t;u_0)-1|+|v(x,t;u_0)-1|\big]=0\quad \forall\,\, u_0\in C_c^+(\R),\,\, \forall\,  0<c<c_{-}^*\}
 $$
 and
 $$
 C_+^*(\chi)=\{c ^*_+>0\,|\,
 \lim_{t\to\infty}\sup _{|x|\ge ct} \big[ u(x,t;u_0)+v(x,t;u_0)\big]=0\quad \forall\,\, u_0\in C_c^+(\R),\,\, \forall\, c>c_{+}^*\}.
 $$
 Let
 $$
 c_{-}^*(\chi)=\sup\{c\in C_-^*(\chi)\}\quad {\rm and}\quad c_+^*(\chi)=\inf\{c\in C_+^*(\chi)\},
 $$
 where $c_-^*(\chi)=0$ if $C_-^*(\chi)=\emptyset$ and $c_+^*(\chi)=\infty$ if $C_+^*(\chi)=\emptyset$.
  It is clear that
$$
0\le c_-^*(\chi)\le c_+^*(\chi)\le\infty.
$$
 Thanks to the feature of $c_-^*(\chi)$ and $c_+^*(\chi)$, we call the interval $[c_-^*(\chi),c_+^*(\chi)]$
the {\it spreading speed interval} of \eqref{IntroEq0-2}.
We prove the following theorem on the upper and lower bounds of the spreading speed interval $[c_-^*(\chi),c_+^*(\chi)]$ of \eqref{IntroEq0-2}.

\medskip
\noindent {\bf Theorem B.}
{\it
\begin{itemize}
\item[(i)] If $0<\chi<1$, then
 \begin{equation}
 \label{spreading-eq3}
  c_+^*(\chi)\le\min\{ 2+\frac{\chi}{1-\chi},  c^*(\chi)\}.
\end{equation}
where $c^*(\chi)$ is as in \eqref{c-star-eq}.

\item[(ii)] If $0<\chi<\frac{2}{3+\sqrt 2}$, then
\begin{equation}
\label{spreading-eq4}
 c_-^*(\chi)\ge 2\sqrt{1-\frac{\chi}{1-\chi}}-\frac{\chi}{1-\chi}>0.
\end{equation}
\end{itemize}
}

\begin{rk}
\label{spreading-rk}
\begin{itemize}

\item[(i)] Observe that $2\leq \lim_{\chi\to 0+} c_-^*(\chi)\leq \lim_{\chi\to 0+}c_+^*(\chi)\le 2$.  Hence the spreading speed interval
$[c_-^*(\chi),c_+^*(\chi)]$
converges to the single point $\{2\}$ as $\chi\to 0+$, which is the spreading speed of  \eqref{fisher-eq}.

\item[(ii)]  For any given $u_0\in C_c^+(\R)$, $c_{\rm low}^*(u_0)$ and $c_{\rm up}^*(u_0)$ can be chosen so that
 $ c_-^*(\chi)\le c_{\rm low}^*(u_0)\le c_{\rm up}^*(u_0)\le c_+^*(\chi)$.

\item[(iii)]
When the source function in \eqref{IntroEq0-2} is replaced by  $f(u)=u(a-bu)$, similarly, we can prove that
 if $0<\chi<b$, then
 \begin{equation}
 \label{spreading-eq5}
 0\le c_-^*(\chi)\le c_+^*(\chi)<\infty,
\end{equation}

and if $0<\chi<\frac{2b}{3+\sqrt {a+1}}$, then
\begin{equation}
\label{spreading-eq6}
0<2\sqrt{a-\frac{a\chi}{b-\chi}}-\frac{a\chi}{b-\chi}\le c_-^*(\chi)\le c_+^*(\chi)\le  2\sqrt a+\frac{a\chi}{b-\chi}<\infty,
\end{equation}

where $c_-^*(\chi)$ and $c_+^*(\chi)$ are such that
$$ \lim_{t\to\infty} \sup_{|x|\le ct} \big[|u(x,t;u_0)-\frac{a}{b}|+|v(x,t;u_0)-\frac{a}{b}|\big]=0\quad \forall u_0\in C_c^+(\R),\,\,
0<c<c_-^*(\chi)
$$
and
$$
 \lim_{t\to\infty}\sup _{|x|\ge ct} \big[ u(x,t;u_0)+v(x,t;u_0)\big]=0\quad \forall\,\, u_0\in C_c^+(\R),\,\, c> c_+^*(\chi).
$$

\item[(iv)] Regarding the spatial spreading speeds of \eqref{IntroEq0-2}, there are still many interesting problems to be studied.
 For example, whether $c_-^*(\chi)=c_+^*(\chi)$;  whether $c_+^*(\chi)=c^*(\chi)$ for $0<\chi<\frac{1}{2}$;
   what is the relation between $c^*_-(\chi)$, $c^*_+(\chi)$ and $2$ for $0<\chi<1$. These questions are
    important in the understanding of the spreading feature of \eqref{IntroEq0-2} because they are related to the issue whether the chemotaxis speeds up or slows down the spreading of the species. % We plan to study these problems in our future works.
\end{itemize}
\end{rk}

We now consider the extensions of Theorems A and B for \eqref{IntroEq0-2} to \eqref{IntroEq0-2-new}. We have the following theorems.

\medskip

\noindent {\bf Theorem C.} {\it Assume that $0< \chi<\frac{1}{2}$.
Let $c^*(\chi)$ be as in Theorem A. Then for any $c\ge c^*(\chi)$
and $\xi\in S^{N-1}$, \eqref{IntroEq0-2-new} has a traveling wave
solution  which connects $(1,1)$ and $(0,0)$ and propagates in the
direction of $\xi\in S^{N-1}$ with speed $c$ } (see Section 5 for
the detail).

\medskip

\noindent {\bf Theorem D.}
{\it Consider \eqref{IntroEq0-2-new}. Let $[c_-^*(\chi),c_+^*(\chi)]$ be the spreading speed interval of
\eqref{IntroEq0-2-new} (see Section 5 for the detail).
\begin{itemize}
\item[(i)] If $0<\chi<1$, then
 \begin{equation}
 \label{spreading-general-eq3}
  0\le c_-^*(\chi)\le c_+^*(\chi) \leq \min\{2+\frac{{ \sqrt{N}} \chi  }{1-\chi},  \frac{1}{\mu_N^*}+\mu_N^*\},
\end{equation}
where $\mu_N^*\in (0,\frac{1}{\sqrt N})$ solves the equation
\begin{equation}
\frac{2^{N}\sqrt{N}\mu_N^*(\mu_N^* +\sqrt{1-N\mu_N^{*2}}) }{1-N\mu_N^{*2}}=\frac{1-\chi}{\chi} .
\end{equation}

\item[(ii)]  If $0<\chi<\frac{2}{3+\sqrt{N+1} }$, then
\begin{equation}
\label{spreading-general-eq4}
 c_-^*(\chi)\ge  2\sqrt{1-\frac{\chi}{1-\chi}}-\frac{\chi\sqrt{N}}{1-\chi}>0.
\end{equation}
\end{itemize}
}

\medskip

  Because of the lack of comparison principle,  the proofs of Theorems A - D are highly non trivial. Our approach to prove  Theorem A is based on the construction of a bounded convex non-empty subset of $C_{\rm unif}^{b}(\R)$, called $\mathcal{E}_{\mu}$(see \eqref{definition-E-mu}), and a continuous and compact function  $U : \mathcal{E}_{\mu}\to \mathcal{E}_{\mu}$. Any fixed point of this function, whose existence is guaranteed by the Schauder's fixed theorem,  becomes a traveling solution of  \eqref{IntroEq0-2}. The construction of the set $\mathcal{E}_{\mu}$ itself is also based on the construction of two special functions. These two special functions are sub-solution and sup-solution of a collection of parabolic equations. At each $u\in\mathcal{E}_{\mu}$ we shall first associate a function which is the solution of a certain parabolic equation, and next define $U(\cdot,u)$ to be the pointwise limit as $t$ goes to infinity of the previous function.  One important ingredient in the proof of
Theorem B is to prove that  for  any $u_{0}\in C^{+}_{c}(\R)$, there is $M>0$ such that
$$
0\le  u(x,t;u_0)\le  M e^{-\mu^*(\chi) (|x|-c_{\mu^*(\chi)} t)},
$$
where $(u(x,t;u_0),v(x,t;u_0))$ is the solution of \eqref{IntroEq0-2}  with $u(x,0;u_0)=u_0(x)$.
To do so, for given $u_0\in C_c^+(\R)$ and $T>0$, we also construct a bounded convex non-empty subset $\mathcal{E}_{\mu}^T(u_0)$
of $C_{\rm unif}^b(\R\times [0,T])$ and a continuous and compact function  $\bar U : \mathcal{E}_{\mu^*(\chi)}^T(u_0)\to \mathcal{E}_{\mu^*(\chi)}^T(u_0)$.
Then we prove $u(\cdot,\cdot;u_0)|_{\R\times[0,T]}$ is a fixed point of $\bar U$. We use the ideas in the proofs of Theorems A and B and some results
in Theorems A and B to prove Theorems C and D.

\smallskip

The rest of this paper is organized as follows. Section 2 is to establish the tools that will be needed in the proof of our main results. It is here that we define the two special functions, which are sub-solution and sup-solution of a collection of parabolic equations,  and the non-empty bounded and convex subset $\mathcal{E}_{\mu}$. In sections 3 and 4, we
prove the  main results on  the existence of traveling wave solutions and on the spreading speeds for \eqref{IntroEq0-2}, respectively.
We give the idea of proofs of Theorems C and D in section 5.

\section{Super- and sub-solutions}

In this section, we will construct super- and sub-solutions of some related equations of \eqref{reduced-main-eq}, which will be used to prove the existence of traveling wave solutions of \eqref{reduced-main-eq} in next section.

 Observe that, if $(u(x,t),v(x,t))=(U(x-ct),V(x-ct))$ is a traveling wave solution of \eqref{reduced-main-eq} connecting $(1,1)$ and $(0,0)$ with speed $c$, then $(u,v)=(U(x),V(x))$ is a stationary solution of
\begin{equation}
\label{stationary-eq}
\begin{cases}
u_{t}=u_{xx}+c u_x-\chi u_{x}v_{x} + u(1-\chi v-(1-\chi)u),\quad x\in\R\\
0=v_{xx}-v+u, \quad x\in\R
\end{cases}
\end{equation}
connecting $(1,1)$ and $(0,0)$. For given $c$, to show the existence of a traveling wave solution of \eqref{reduced-main-eq} connecting $(1,1)$ and
 $(0,0)$ is then equivalent to
show the existence of a stationary solution connecting $(1,1)$ and $(0,0)$.  Throughout this section, we assume that
$0<\chi<1$, unless specified otherwise.

For every $0<\mu<1$ and $x\in \R$ define
$$\varphi_{\mu}(x)=e^{-\mu x}\quad {\rm and}\quad c_{\mu}=\mu+\frac{1}{\mu}.
 $$
 Note that for every fixed $0<\mu<1$, the function $\varphi_{\mu}$ is decreasing, infinitely many differentiable, and satisfied
 \begin{equation}\label{Eq1 of varphi}
\varphi_{\mu}''(x)+c_{\mu}\varphi_{\mu}'(x)+\varphi(x)=0 \quad\forall\ x\in\R
\end{equation}
and
\begin{equation}\label{Eq2 of varphi}
\frac{1}{1-\mu^2}\varphi_{\mu}''(x) - \frac{1}{1-\mu^2}\varphi_{\mu}(x)=-\varphi_{\mu}(x)\quad \forall\,\, x\in\R.
\end{equation}

For every $\mu\in (0, 1)$ define
\begin{equation}
U_{\mu}^{+}(x)=\min\{\frac{1}{1-\chi}, \varphi_{\mu}(x)\}=\begin{cases}
\frac{1}{1-\chi} \ \quad \text{if }\ x\leq \frac{\ln(1-\chi)}{\mu}\\
e^{-\mu x} \quad \ \text{if}\ x\geq \frac{\ln(1-\chi)}{\mu}.
\end{cases}
\end{equation}
and
\begin{equation}V_{\mu}^{+}(x)=\min\{\frac{1}{1-\chi}, \,\  \frac{1}{1-\mu^{2}}\varphi_{\mu}(x)\}.
\end{equation}
Since $\varphi_{\mu}$ is decreasing, then the functions $U^{+}_{\mu}$ and $V_{\mu}^{+}$ are both non-increasing. Furthermore, the functions $U^{+}_{\mu}$ and $V_{\mu}^{+}$ belong to $C^{\delta}_{\rm unif}(\R)$ for every $0\leq \delta< 1$ and  $0< \mu<1$.

Let $0< \mu<1$ be fixed. Next, let $\mu<\tilde{\mu}<\min\{1,2\mu\}$ and $d>1$. The function $\varphi_{\mu}-d\varphi_{\tilde{\mu}}$ achieved its maximum value at $\bar{a}_{\mu,\tilde{\mu},d}:=\frac{\ln(d\tilde{\mu})-\ln(\mu)}{\tilde{\mu}-\mu}$ and takes the value zero at $\underline{a}_{\mu,\tilde{\mu},d}:= \frac{\ln(d)}{\tilde{\mu}-\mu}$.
%For every  $\underline{a}_{\mu,\tilde{\mu},d} < x_{1} < \bar{a}_{\mu,\tilde{\mu},d}$, we shall associate the open interval $\Omega_{x_{1}}=(x_{1}\ ,  \ x_{2})$ where $x_{2}>\bar{a}_{\mu,\tilde{\mu},d}$ is the only real number satisfying
%\begin{equation}
%\varphi_{\mu}(x_{1})-d\varphi_{\tilde{\mu}}(x_{1})=\varphi_{\mu}(x_{2})-d\varphi_{\tilde{\mu}}(x_{2}).
%\end{equation}
Define
\begin{equation}
U_{\mu}^{-}(x):= \max\{ 0, \varphi_{\mu}(x)-d\varphi_{\tilde{\mu}}(x)\}=\begin{cases}
0\qquad \qquad \qquad \quad \text{if}\ \ x\leq \underline{a}_{\mu,\tilde{\mu},d}\\
\varphi_{\mu}(x)-d\varphi_{\tilde{\mu}}(x)\quad \text{if}\ x\geq \underline{a}_{\mu,\tilde{\mu},d}.
\end{cases}
\end{equation}
Clearly, $0\leq U_{\mu}^{-}\leq U^{+}_{\mu}\leq \frac{1}{1-\chi}$ and $U_{\mu}^{-}\in C^{\delta}_{\rm unif}(\R)$ for every $0\leq \delta< 1$.

\medskip

Let us consider the set $\mathcal{E}_{\mu}$ defined by
\begin{equation}\label{definition-E-mu}
\mathcal{E}_{\mu}=\{u\in C^{b}_{\rm unif}(\R) \,|\, U_{\mu}^{-}\leq u\leq U_{\mu}^{+}\}
\end{equation}
for every $0<\mu<1$. It should be noted that $U_{\mu}^{-}$ and $\mathcal{E}_{\mu}$ all depend on $\tilde{\mu}$ and $d$. Later on, we shall provide more information on how to choose $d$ and $\tilde{\mu}$ whenever $\mu$ is given.

For every $u\in C_{\rm unif}^b(\R)$, consider
\begin{equation}\label{ODE2}
U_{t}=U_{xx}+(c_{\mu}-\chi V'(x;u))U_{x}+(1-\chi V(x;u)-(1-\chi)U)U, \quad x\in \R,\ t>0,
\end{equation}
where
\begin{equation}\label{Inverse of u}
V(x;u)=\int_{0}^{\infty}\int_{\R}\frac{e^{-s}}{\sqrt{4\pi s}}e^{-\frac{|x-z|^{2}}{4s}}u(z)dzds.
\end{equation}
It is well known that the function $V(x;u)$
is the solution of the second equation of \eqref{reduced-main-eq} in $C^{b}_{\rm unif}(\R)$ with given $u\in C_{\rm unif}^b(\R)$.

For  given open intervals $D\subset \R$ and $I\subset \R$, a function $U(\cdot,\cdot)\in C^{2,1}(D\times I,\R)$ is called a {\it super-solution} or {\it sub-solution} of \eqref{ODE2} on $D\times I$ if
$$U_{t}\ge U_{xx}+(c_{\mu}-\chi V'(x;u))U_{x}+(1-\chi V(x;u)-(1-\chi)U)U \quad {\rm for}\,\, x\in D,\,\,\, t\in I
$$
or
$$
U_{t}\le U_{xx}+(c_{\mu}-\chi V'(x;u))U_{x}+(1-\chi V(x;u)-(1-\chi)U)U \quad {\rm for}\,\, x\in D,\,\,\, t\in I.
$$
\begin{tm}
\label{super-sub-solu-thm}
Suppose that $0<\chi<\frac{1}{2}$ and  $0<\mu<1$ satisfy
\begin{equation}\label{Eq01_Th1}
 \frac{\mu(\mu+\sqrt{1-\mu^2})}{1-\mu^2} \leq \frac{1- \chi}{\chi}.
\end{equation}

 Then for every $u\in \mathcal{E}_{\mu}$, the following hold.
\begin{itemize}
\item[(1)] $U(x,t)=\frac{1}{1-\chi}$ and $U(x,t)=\varphi_\mu(x)$ are supper-solutions of \eqref{ODE2} on $\R\times\R$.

\item[(2)] There is $d_0>0$ such that $U(x,t)=U_\mu^-(x)$ is a sub-solution of \eqref{ODE2} on
$(\underline{a}_{\mu,\tilde{\mu},d},\infty)\times \R$ for all $d\ge d_0$ and $\mu< \tilde{\mu}<\min\{1,2\mu,\mu+ \frac{1}{\mu+\sqrt{1-\mu^2}}\}$.
Moreover, $U(x,t)=U_\mu^-(x_\delta)$ is a sub-solution of \eqref{ODE2} on $\R\times \R$ for $0<\delta\ll 1$,
where $x_\delta=\underline{a}_{\mu,\tilde{\mu},d}+\delta$.
\end{itemize}
\end{tm}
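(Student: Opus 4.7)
The plan is to exploit three structural ingredients: (a) the exponential $\varphi_\mu$ is annihilated by the linear operator $\partial_x^2 + c_\mu\partial_x + 1$ by \eqref{Eq1 of varphi}; (b) the Bessel representation
$$V(x;u) = \frac{1}{2}\int_\R e^{-|x-z|} u(z)\, dz,$$
which yields the pointwise bounds $V(x;u)\le\frac{1}{1-\chi}$ whenever $u\le\frac{1}{1-\chi}$ and $V(x;u)\le\frac{\varphi_\mu(x)}{1-\mu^2}$ whenever $u\le\varphi_\mu$ (via \eqref{Eq2 of varphi}); and (c) the universal gradient estimate $|V'(x;u)|\le V(x;u)$, obtained by differentiating the convolution in (b). For any $u\in\mathcal{E}_\mu$ both upper bounds on $V$ are simultaneously available.

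For claim (1) with $U\equiv\frac{1}{1-\chi}$, all derivatives of $U$ vanish and the super-solution inequality collapses to $0\ge -\frac{\chi V(x;u)}{1-\chi}$, which is immediate from $V\ge 0$. For $U=\varphi_\mu$, I would substitute into \eqref{ODE2} and use \eqref{Eq1 of varphi} to cancel the purely linear part, so that the super-solution defect becomes
$$U_t - U_{xx} - (c_\mu-\chi V')U_x - \bigl(1-\chi V -(1-\chi)U\bigr)U = \varphi_\mu\bigl[\chi(V-\mu V') + (1-\chi)\varphi_\mu\bigr],$$
which is nonnegative because (c) gives $V-\mu V'\ge(1-\mu)V\ge 0$ for $\mu\in(0,1)$, while $(1-\chi)\varphi_\mu\ge 0$.

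For claim (2), on the open set $\{x>\underline{a}_{\mu,\tilde\mu,d}\}$ the function $U_\mu^-$ coincides with the smooth profile $\varphi_\mu-d\varphi_{\tilde\mu}$, and two applications of \eqref{Eq1 of varphi} give
$$(U_\mu^-)''+c_\mu(U_\mu^-)' = -U_\mu^- + dA\,\varphi_{\tilde\mu}, \qquad A := c_\mu\tilde\mu-\tilde\mu^2-1,$$
with $A>0$ exactly when $\tilde\mu\in(\mu,1/\mu)$. Substituting into \eqref{ODE2} reduces the sub-solution inequality to
$$dA\,\varphi_{\tilde\mu}(x) \;\ge\; \chi V(x;u)U_\mu^-(x) + (1-\chi)(U_\mu^-(x))^2 + \chi V'(x;u)(U_\mu^-)'(x).$$
Applying the bounds $U_\mu^-\le\varphi_\mu$, $V\le\varphi_\mu/(1-\mu^2)$, $|V'|\le\varphi_\mu/(1-\mu^2)$ and $|(U_\mu^-)'|\le\mu\varphi_\mu+d\tilde\mu\varphi_{\tilde\mu}$, the right-hand side is dominated by a combination of $\varphi_\mu^2$ and $d\varphi_\mu\varphi_{\tilde\mu}$. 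Dividing through by $\varphi_{\tilde\mu}$ and using that on $\{x\ge\underline{a}_{\mu,\tilde\mu,d}\}$ one has $\varphi_\mu(x)\le d^{-\mu/(\tilde\mu-\mu)}$ and $\varphi_\mu^2(x)/\varphi_{\tilde\mu}(x)\le d^{-(2\mu-\tilde\mu)/(\tilde\mu-\mu)}$ — both negative powers of $d$ once $\tilde\mu<2\mu$ — the inequality reduces to $dA\ge C_1 d^{-\alpha_1}+C_2 d^{1-\alpha_2}$ with positive $\alpha_1,\alpha_2$, and therefore holds for all $d\ge d_0$ sufficiently large.

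The main obstacle will be the cross-term $\chi V'(U_\mu^-)'$: a rough triangle-inequality bound suffices to close the argument for large $d$, but pinning down the optimal threshold on $\tilde\mu$ requires combining the refined identity $V(x;\varphi_\mu)=\varphi_\mu/(1-\mu^2)$ with the characteristic algebra of $\mu^*$ in \eqref{mu-star-new-eq}; this is precisely where the hypotheses \eqref{Eq01_Th1} and $\tilde\mu<\mu+\frac{1}{\mu+\sqrt{1-\mu^2}}$ enter. Finally, for the "moreover" statement, $U_\mu^-(x_\delta)$ is a small positive constant (since $x_\delta>\underline{a}_{\mu,\tilde\mu,d}$), and for a constant function $U\equiv K$ the sub-solution inequality becomes $(1-\chi)K\le 1-\chi V(x;u)$; by (b), $\chi V(x;u)\le\chi/(1-\chi)$, so the right side is bounded below by $(1-2\chi)/(1-\chi)>0$ whenever $\chi<1/2$, and thus for $\delta>0$ small enough the inequality holds uniformly in $x\in\R$ and $u\in\mathcal{E}_\mu$.
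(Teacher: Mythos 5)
Your proposal is correct and takes a genuinely different route from the paper, principally via the sharp gradient estimate in your ingredient (c). You observe that for the 1D Bessel kernel $V(x;u)=\frac12\int_\R e^{-|x-z|}u(z)\,dz$, writing $V=A+B$ and $V'=B-A$ with $A,B\ge0$ gives the pointwise bound $|V'(x;u)|\le V(x;u)$ for any nonnegative $u$, hence $|V'(\cdot;u)|\le V(\cdot;\varphi_\mu)=\varphi_\mu/(1-\mu^2)$ on $\mathcal E_\mu$. The paper's Lemma~\ref{Mainlem3} instead works through the Gaussian heat-kernel representation and a triangle inequality, producing the weaker constant $(\mu+\sqrt{1-\mu^2})/(1-\mu^2)$; since $\mu+\sqrt{1-\mu^2}\ge1$ on $(0,1)$, your bound is strictly sharper. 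This buys two simplifications. First, for part (1) with $U=\varphi_\mu$ your defect identity $\varphi_\mu[\chi(V-\mu V')+(1-\chi)\varphi_\mu]$ is nonnegative directly from $V-\mu V'\ge(1-\mu)V\ge0$, so you do not need \eqref{Eq01_Th1} at all, whereas the paper invokes \eqref{Eq01_Th1} precisely because of the extra $\mu+\sqrt{1-\mu^2}$ factor. Second, for part (2) your reduction $dA\ge C_1 d^{-(2\mu-\tilde\mu)/(\tilde\mu-\mu)}+C_2 d^{1-\mu/(\tilde\mu-\mu)}$ closes for all $d\gg1$ whenever $\mu<\tilde\mu<\min\{1,2\mu\}$, a strictly larger interval than the theorem's $\min\{1,2\mu,\mu+\tfrac1{\mu+\sqrt{1-\mu^2}}\}$; the paper's tighter constraint is needed only to make its cleaner algebraic decomposition (the absorption of the $-d^2(1-\chi)\varphi_{\tilde\mu}^2$ term and the requirement $A_2\ge0$) go through, not to make the statement true. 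So the caveat at the end of your plan about "pinning down the optimal threshold on $\tilde\mu$" is unnecessary: you have already covered the entire range of $\tilde\mu$ in the theorem hypothesis, and what you would be matching is only an artifact of the paper's particular bookkeeping. The "moreover" argument for the constant $U_\mu^-(x_\delta)$ is essentially identical in both proofs.

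One small point worth making explicit if you write this up: the identity $V(x;\varphi_\mu)=\varphi_\mu(x)/(1-\mu^2)$ should be stated as an upper bound $V(x;u)\le\varphi_\mu(x)/(1-\mu^2)$ for $u\in\mathcal E_\mu$ (which follows by monotonicity of the Bessel convolution), and you also need the separate bound $V(x;u)\le\|u\|_\infty\le\frac1{1-\chi}$ for the constant super-solution and for the "moreover" part, both of which you already have in ingredient (b). Your plan uses all the same structural facts \eqref{Eq1 of varphi}, \eqref{Eq2 of varphi} and the decomposition $(U_\mu^-)''+c_\mu(U_\mu^-)'=-U_\mu^-+dA\varphi_{\tilde\mu}$ as the paper does; the divergence is confined to the choice of gradient estimate and the coarser large-$d$ argument, both of which are sound.
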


To prove Theorem \ref{super-sub-solu-thm}, we first establish some estimates on
$V(\cdot;u)$ and $V^{'}(\cdot;u)$.

It was established in \cite{SaSh} that
\begin{equation}\label{Estimates on Inverse of u}
\max\{\|V(\cdot;u)\|_{\infty}, \ \|V'(\cdot;u)\|_{\infty} \}\leq \|u\|_{\infty}\quad \forall\ u\in C^{b}_{\rm unif}(\R).
\end{equation}
Furthermore, let
$$
C_{\rm unif}^{2,b}(\R)=\{u\in C_{\rm unif}^b(\R)\,|\, u^{'}(\cdot),\, u^{''}(\cdot)\in C_{\rm unif}^b(\R)\}.
$$
For every $u\in C_{\rm unif}^{b}(\R)$, $u\geq 0,$ we have that  $V(\cdot;u)\in C^{2,b}_{\rm unif}(\R)$ with $V(\cdot;u)\geq 0$ and
$$\| V''(\cdot;u)\|_{\infty}=\|V(\cdot;u)-u\|_{\infty}\leq \max\Big\{\|V(\cdot;u)\|_{\infty},\|u\|_{\infty}\Big\}.
$$
Combining this with inequality \eqref{Estimates on Inverse of u}, we obtain that
\begin{equation}\label{Estimates on Inverse of V}
\max\{\|V(\cdot;u)\|_{\infty}, \ \|V'(\cdot;u)\|_{\infty}, \|V''(\cdot;u)\|_{\infty}\}\leq \|u\|_{\infty}\quad \forall\ u\in \mathcal{E}_{\mu}.
\end{equation}
% Using Taylor's expansion theorem with integral reminder, we obtain for every $x,y\in\R$ that
%the first fundamental theorem of calculus we have, for every $x,y,s\in\R$
%$$
%V(y;u)=V(x;u)+\int_{x}^{y}V'(\tau;u)d\tau
%$$
%and
%$$
%V'(s;u)=V'(x;u)+\int_{x}^{s}V''(\eta;u)d\eta
%$$
%Thus, using Fubini's Theorem, we obtain for every $x,y\in\R$ that
%\begin{eqnarray}\label{Integral equation of v}
%V(y;u)
%& = & V(x;u)+\int_{x}^{y}\Big(V'(x, u)+\int_{x}^{s}V''(\tau, u)d\tau\Big)ds\nonumber\\
%& = & V(x, u)+ (y-x)V'(x, u) +\int_{x}^{y}\int_{x}^{s}V''(\tau , u)d\tau ds\nonumber\\
%& = & V(x;u)+ (y-x)V'(x;u) +\int_{x}^{y}\int_{\tau}^{y}V''(\tau, u)ds d\tau \nonumber\\
%& = & V(x; u)+ (y-x)V'(x;u) +\int_{x}^{y}(y-z)V''(z; u)dz\nonumber\\
%& = & V(x;u)+ (y-x)V'(x; u) +\int_{x}^{y}(y-z)[V(z; u)-u(z)]dz.
%\end{eqnarray}

The next Lemma provide a pointwise estimate  for $|V(\cdot;u)|$ whenever $u\in \mathcal{E}_{\mu}.$

\begin{lem}\label{Mainlem2}
For every $0<\mu<1$ and $u\in \mathcal{E}_{\mu}$, let $V(\cdot;u)$ be defined as in \eqref{Inverse of u}, then
\begin{equation}\label{Eq_MainLem2}
0\leq  V(\cdot;u)\leq V^{+}_{\mu}(\cdot).
\end{equation}
\end{lem}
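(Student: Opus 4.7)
The plan is to prove the two inequalities separately, with the lower bound being essentially immediate and the upper bound coming from the linearity and positivity of the representation $V(\cdot;u)$.

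For the lower bound $V(\cdot;u) \ge 0$: since $u \in \mathcal{E}_\mu$ satisfies $u \ge U_\mu^- \ge 0$, and the kernel $\frac{e^{-s}}{\sqrt{4\pi s}}e^{-|x-z|^2/(4s)}$ in \eqref{Inverse of u} is strictly positive, the integral is manifestly non-negative.

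For the upper bound, I would use that $V(\cdot;u)$ is linear and monotone in $u$ (from the positivity of the kernel), and that $u \le U_\mu^+ = \min\{\frac{1}{1-\chi}, \varphi_\mu\}$. Hence it suffices to establish the two identities $V(\cdot\,;\frac{1}{1-\chi}) \equiv \frac{1}{1-\chi}$ and $V(\cdot\,;\varphi_\mu) = \frac{1}{1-\mu^2}\varphi_\mu$, since then the monotonicity gives
\[
V(x;u) \le \min\bigl\{V(x;\tfrac{1}{1-\chi}),\, V(x;\varphi_\mu)\bigr\} = \min\bigl\{\tfrac{1}{1-\chi},\, \tfrac{1}{1-\mu^2}\varphi_\mu(x)\bigr\} = V_\mu^+(x).
\]
The first identity follows because $V(\cdot\,;c) = c\int_0^\infty e^{-s}\,ds = c$ for any constant $c$. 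For the second, I would compute directly: after the substitution $y = z - x$ in the inner Gaussian integral and completing the square $-\frac{y^2}{4s} - \mu y = -\frac{(y+2s\mu)^2}{4s} + s\mu^2$, the $y$-integral evaluates to $e^{s\mu^2}$, leaving
\[
V(x;\varphi_\mu) = \varphi_\mu(x)\int_0^\infty e^{-(1-\mu^2)s}\,ds = \frac{\varphi_\mu(x)}{1-\mu^2},
\]
where convergence at $s = \infty$ uses $\mu^2 < 1$. (One can also verify this by checking that $w := \frac{1}{1-\mu^2}\varphi_\mu$ solves $w'' - w + \varphi_\mu = 0$, which is exactly \eqref{Eq2 of varphi}.) Finally, the monotonicity of $V$ in $u$ must be applied with some care because $\varphi_\mu$ is unbounded as $x \to -\infty$, but since all integrands in the representation are non-negative, Tonelli's theorem legitimizes swapping integrals and allows the inequality $u \le \varphi_\mu$ to pass termwise.

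The only mild obstacle is that $\varphi_\mu \notin C_{\rm unif}^b(\R)$, so one cannot cite the uniqueness statement "$V(\cdot;u)$ is the $C^b_{\rm unif}$-solution of $v_{xx}-v+u=0$" directly for $u = \varphi_\mu$. The direct computation of the integral circumvents this entirely, which is why I would favour the computational route over an a priori comparison argument on the elliptic equation.
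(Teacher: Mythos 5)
Your proof is correct and follows essentially the same route as the paper's: you exploit monotonicity of $u\mapsto V(\cdot;u)$ coming from positivity of the kernel, and you evaluate the kernel applied to $\varphi_\mu$ by the same completing-the-square computation that yields $\frac{1}{1-\mu^2}\varphi_\mu$ (the paper's equation \eqref{Eq011}). The only superficial difference is that you pass the $\min$ through $V$ by applying monotonicity twice ($u\le\frac{1}{1-\chi}$ and $u\le\varphi_\mu$ separately), whereas the paper first bounds $V(\cdot;u)\le V(\cdot;U_\mu^+)$ and then pulls the $\min$ out of the integral — these are the same estimate, and your side remark about why one should not invoke uniqueness in $C_{\rm unif}^b(\R)$ for the unbounded test function $\varphi_\mu$ is a sound, if small, refinement.
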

\begin{proof} For every  $u\in \mathcal{E}_{\mu}$, since $ 0 \leq U^{-}_{\mu}\leq u\leq U^{+}_{\mu}$ then
$$ 0\leq V(\cdot;U^{-}_{\mu})\leq V(\cdot;u)\leq V(\cdot;U^{+}_{\mu}).$$ Hence it is enough to prove that $V(\cdot;U_{\mu}^{+})\leq V^{+}_{\mu}(\cdot)$.  For every $x\in \R$, $0<\mu<1$,  we have that
\begin{eqnarray}\label{Eq011}
\int_{0}^{\infty}\Big(\int_{\R}\frac{e^{-s}e^{-\frac{|x-z|^2}{4s}}\varphi_{\mu}( z)}{\sqrt{4\pi s}}dz\Big)ds & = &\frac{1}{\sqrt{\pi}}\int_{0}^{\infty}e^{-s}\Big(\int_{\R}e^{- z^2}e^{-\mu (x-2\sqrt{s}z)}dz\Big)ds\nonumber\\
&=& \frac{e^{-\mu x}}{\sqrt{\pi}}\int_{0}^{\infty}e^{-s}\Big(\int_{\R}e^{- |z-\mu\sqrt{s}|^{2}}e^{\mu^{2} s}dz\Big)ds\nonumber\\
&=& \frac{e^{-\mu x}}{\sqrt{\pi}}\int_{0}^{\infty}e^{-(1-\mu^2)s}\Big({\int_{\R}e^{- |z-\mu\sqrt{s}|^{2}}dz}\Big)ds\nonumber\\
&=& e^{-\mu x}\int_{0}^{\infty}e^{-(1-\mu^2)s}ds\nonumber\\
&=& \frac{\varphi_{\mu}( x)}{1-\mu^{2}}.
\end{eqnarray}
Thus, we have
\begin{eqnarray*}
V(x;U^{+}_{\mu})&= & \int_{0}^{\infty}\Big(\int_{\R}\frac{e^{-s}e^{-\frac{|x-z|^2}{4s}}U^{+}_{\mu}( z)}{\sqrt{4\pi s}}dz\Big)ds\nonumber\\
&= & \int_{0}^{\infty}\Big(\int_{\R}\frac{e^{-s}e^{-\frac{|x-z|^2}{4s}}}{\sqrt{4\pi s}}\min\{ \frac{1}{1-\chi} \ ,\ \varphi_{\mu}(z) \}dz\Big)ds\nonumber\\
&\leq & \min\Big\{ \frac{1}{1-\chi}\underbrace{\int_{0}^{\infty}\int_{\R}\frac{e^{-s}e^{-\frac{|x-z|^2}{4s}}}{\sqrt{4\pi s}}dzds}_{=1}\ ,\ \int_{0}^{\infty}\Big(\int_{\R}\frac{e^{-s}e^{-\frac{|x-z|^2}{4s}}}{\sqrt{4\pi s}} \varphi_{\mu}(z)dz\Big)ds\Big\}\nonumber\\
&=& V^{+}_{\mu}(x).
\end{eqnarray*}
\end{proof}

Next, we present a pointwise  estimate  for $|V'(\cdot;u)|$ whenever $u\in \mathcal{E}_{\mu}.$

\begin{lem}\label{Mainlem3}
Let $u\in C^{b}_{\rm unif}(\R)$ and $V(\cdot;u)\in C^{2,b}_{\rm unif}(\R)$ be the corresponding function satisfying the second equation of \eqref{reduced-main-eq}. Then
\begin{equation}\label{Eq_Mainlem01}
|V'(x;u)|\leq \frac{\mu+\sqrt{1-\mu^2}}{1-\mu^2}\varphi_{\mu}(x)
\end{equation}
for every $x \in\R$ and every $u\in\mathcal{E}_{u}$.
\end{lem}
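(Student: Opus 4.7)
The plan is to estimate $V'(x;u)$ by direct computation from the Gaussian representation \eqref{Inverse of u}, imitating the square-completion calculation that produced \eqref{Eq011} in the proof of Lemma \ref{Mainlem2}. I would first differentiate \eqref{Inverse of u} under the integral sign---legitimate because $u$ is bounded and the $x$-derivative of the kernel decays rapidly in $z$---to obtain
\begin{equation*}
V'(x;u)=-\int_0^\infty\int_\R\frac{e^{-s}}{\sqrt{4\pi s}}\,\frac{x-z}{2s}\,e^{-|x-z|^2/(4s)}\,u(z)\,dz\,ds.
\end{equation*}
Since $u\in\mathcal{E}_\mu$ satisfies $0\le u\le U_\mu^+\le\varphi_\mu$ pointwise, pulling absolute values inside both integrals and replacing $u$ by its upper bound $\varphi_\mu$ yields
\begin{equation*}
|V'(x;u)|\le\int_0^\infty\int_\R\frac{e^{-s}}{\sqrt{4\pi s}}\,\frac{|x-z|}{2s}\,e^{-|x-z|^2/(4s)}\,\varphi_\mu(z)\,dz\,ds.
\end{equation*}

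For each fixed $s>0$ I would then execute the substitution $z=x+2\sqrt{s}\,w$ followed by the shift $v=w+\mu\sqrt{s}$ (the same shift used in \eqref{Eq011}). A routine computation rewrites the inner $z$-integral as
\begin{equation*}
\int_\R\frac{|x-z|}{2s}\,e^{-|x-z|^2/(4s)}\,\varphi_\mu(z)\,dz\;=\;2\,\varphi_\mu(x)\,e^{\mu^2 s}\int_\R|v-\mu\sqrt{s}|\,e^{-v^2}\,dv.
\end{equation*}
Invoking the triangle inequality $|v-\mu\sqrt{s}|\le|v|+\mu\sqrt{s}$ together with the elementary Gaussian moments $\int_\R|v|e^{-v^2}\,dv=1$ and $\int_\R e^{-v^2}\,dv=\sqrt{\pi}$ bounds the $v$-integral by $1+\mu\sqrt{\pi s}$.

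Finally, substituting this back and evaluating the outer $s$-integrals via $\int_0^\infty\frac{e^{-(1-\mu^2)s}}{\sqrt{s}}\,ds=\sqrt{\pi/(1-\mu^2)}$ and $\int_0^\infty e^{-(1-\mu^2)s}\,ds=1/(1-\mu^2)$ collapses the two surviving terms into
\begin{equation*}
|V'(x;u)|\le\varphi_\mu(x)\left[\frac{1}{\sqrt{1-\mu^2}}+\frac{\mu}{1-\mu^2}\right]=\frac{\mu+\sqrt{1-\mu^2}}{1-\mu^2}\,\varphi_\mu(x),
\end{equation*}
which is precisely \eqref{Eq_Mainlem01}. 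The only mildly delicate step is the bookkeeping in the two-stage change of variables, where the Jacobian $2\sqrt{s}\,dw$, the weight $|x-z|/(2s)$ arising from the differentiation, and the Gaussian square-completion must all be tracked at once; the triangle inequality $|v-\mu\sqrt{s}|\le|v|+\mu\sqrt{s}$ is exactly what splits the final expression into the two summands that assemble into the numerator $\mu+\sqrt{1-\mu^2}$.
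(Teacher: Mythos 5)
Your proposal is correct and follows essentially the same route as the paper's proof of Lemma \ref{Mainlem3}: differentiate the Gaussian representation, bound $u$ by $\varphi_\mu$, substitute and complete the square, apply the triangle inequality $|v-\mu\sqrt{s}|\le|v|+\mu\sqrt{s}$, and evaluate the two resulting Gamma-type $s$-integrals. The bookkeeping of the Jacobian, the kernel weight, and the Gaussian moments all checks out.
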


\begin{proof} Let $u\in\mathcal{E}_{\mu}$ and fix any $x\in \R$.
\begin{align}\label{Eq_Mainlem0001}
V'(x;u)=\int_{0}^{\infty}\int_{\R}\frac{(z-x)e^{-s}}{2s\sqrt{4\pi s }}e^{-\frac{|z-x|^{2}}{4s}}u(z)dzds=\frac{1}{\sqrt{\pi}}\int_{0}^{\infty}\int_{\R}\frac{ze^{-s}}{\sqrt{ s }}e^{-z^{2}}u(x+2\sqrt{s}z)dzds.
\end{align}

Observe that
\begin{eqnarray}\label{Eq_Mainlem0002}
\frac{1}{\sqrt\pi}\int_{0}^{\infty}\int_{\R}\frac{|z|}{\sqrt{s}}e^{-s}e^{-|z|^2}\varphi_{\mu}(x+2\sqrt{s}z)dzds &\leq &  \frac{\varphi_{\mu}(x)}{\sqrt{\pi}}\int_{0}^{\infty}\frac{e^{-(1-\mu^2)s}}{\sqrt{s}}\Big(\int_{\R}|z|e^{-|z-\mu\sqrt{s}|^2}dz\Big)ds \nonumber \\
&=&  \frac{\varphi_{\mu}(x)}{\sqrt{\pi}}\int_{0}^{\infty}\frac{e^{-(1-\mu^2)s}}{\sqrt{s}}\Big(\int_{\R}|z+\mu\sqrt{s}|e^{-|z|^2}dz\Big)ds \nonumber\\
&\leq &  \frac{\varphi_{\mu}(x)}{\sqrt{\pi}}\int_{0}^{\infty}\frac{e^{-(1-\mu^2)s}}{\sqrt{s}}\Big(\int_{\R}(|z|+\mu\sqrt{s})e^{-|z|^2}dz\Big)ds \nonumber\\
&= &  \frac{\varphi_{\mu}(x)}{\sqrt{\pi}}\int_{0}^{\infty}\frac{(1+\mu\sqrt{\pi s})e^{-(1-\mu^2)s}}{\sqrt{s}}ds \nonumber\\
&= &(\frac{1}{\sqrt{1-\mu^2}} +\frac{\mu}{1-\mu^2})\varphi_{\mu}(x).
\end{eqnarray}
Since $u\leq \varphi_{\mu}$, \eqref{Eq_Mainlem01} follows from \eqref{Eq_Mainlem0001} and \eqref{Eq_Mainlem0002}.
The Lemma is thus proved.
\end{proof}

\begin{proof}[Proof of Theorem \ref{super-sub-solu-thm}]
 For every $U\in C^{2,1}(\R\times\R_{+})$, let \begin{equation}\label{mathcal L}
\mathcal{L}U=U_{xx}+(c_{\mu}-\chi V'(\cdot;u))U_{x}+(1-\chi V(\cdot;u)-(1-\chi)U)U .
\end{equation}
(1)  First, we have that
\begin{align*}
\mathcal{L}( \frac{1}{1-\chi})&= (1-\chi V(\cdot;u)-1)\frac{1}{1-\chi}\\
&= -\frac{\chi}{1-\chi} V(\cdot;u)\\
&\le 0.
\end{align*}
Hence $U(x,t)=\frac{1}{1-\chi}$ is a super-solution of \eqref{ODE2} on $\R\times\R$.

Next, it follows from Lemma \ref{Mainlem3} and \eqref{Eq01_Th1} that
\begin{eqnarray}\label{H}
 \mathcal{L}(\varphi_{\mu})& =& \varphi''_{\mu}(x)+(c_{\mu}-\chi V'(\cdot;u))\varphi_{\mu}'(x)+(1-\chi V(\cdot;u)-(1-\chi)\varphi_{\mu})\varphi_{\mu}\nonumber\\
 & =& \underbrace{(\varphi''_{\mu}+c_{\mu}\varphi'_{\mu}+\varphi_{\mu})}_{=0} +(\mu\chi V'(\cdot;u)-\chi V(\cdot;u)-(1-\chi)\varphi_{\mu} )\varphi_{\mu}\nonumber\\
  & =& (\mu\chi V'(\cdot;u)-\chi V(\cdot;u)-(1-\chi)\varphi_{\mu} )\varphi_{\mu}\nonumber\\
   & \leq & \chi\Big(  \frac{\mu(\mu+\sqrt{1-\mu^2})}{1-\mu^2}-\frac{(1-\chi)}{\chi}\Big)\varphi_{\mu}^{2} \nonumber\\
   &\leq & 0.
\end{eqnarray}
Hence $U(x,t)=\varphi_\mu(x)$ is also a super-solution of \eqref{ODE2} on $\R\times\R$.

(2)  Let $O=(\underline{a}_{\mu,\tilde{\mu},d},\infty)$. Then for $x\in O$, $U_\mu^-(x)>0$.
 For $x\in O$, it follows from inequality \eqref{Eq_Mainlem01} that
\begin{eqnarray*}
\mathcal{L}U_{\mu}^{-}
& =& \mu^2\varphi_{\mu}-\tilde{\mu}^2d\varphi_{\tilde{\mu}} +(c_{\mu}-\chi V'(\cdot;u))(-\mu\varphi_{\mu}+d\tilde{\mu}\varphi_{\tilde{\mu}})+(1-\chi V(\cdot;u)-(1-\chi) U_{\mu}^{-})U_{\mu}^{-} \nonumber\\
& =&\underbrace{(\mu^{2}-\mu c_{\mu}+1)}_{=0}\varphi_{\mu} +d\underbrace{(\tilde{\mu}c_{\mu}-\tilde{\mu}^{2}-1)}_{=A_{0}}\varphi_{\tilde{\mu}} -\chi V'(\cdot;u)(-\mu\varphi_{\mu}+d\tilde{\mu}\varphi_{\tilde{\mu}})   -(\chi V +(1-\chi)U_{\mu}^{-})U_{\mu}^{-}\nonumber\\
& \geq  & dA_{0}\varphi_{\tilde{\mu}} -\chi|V'(\cdot;u)|(\mu\varphi_{\mu}+d\tilde{\mu}\varphi_{\tilde{\mu}}) -\chi V^{+}_{\mu}U_{\mu}^{-}-(1-\chi)[U^{-}_{\mu}]^2\nonumber\\
& \geq & dA_{0}\varphi_{\tilde{\mu}} -\chi \frac{(\mu+\sqrt{1-\mu^2})}{1-\mu^2}\Big(\mu\varphi_{\mu}+d\tilde{\mu}\varphi_{\tilde{\mu}}\Big)\varphi_{\mu} -\chi V^{+}_{\mu}U_{\mu}^{-}-(1-\chi)[U^{-}_{\mu}]^2\nonumber\\
& \geq & dA_{0}\varphi_{\tilde{\mu}} -\chi \frac{(\mu+\sqrt{1-\mu^{2}})}{1-\mu^2}\Big(\mu\varphi_{\mu}+d\tilde{\mu}\varphi_{\tilde{\mu}}\Big)\varphi_{\mu} -\frac{\chi}{1-\mu^2}\varphi_{\mu}U^{-}_{\mu}-(1-\chi)[U^{-}_{\mu}]^2\nonumber\\
& = & dA_{0}\varphi_{\tilde{\mu}} -\underbrace{(\chi \frac{\mu(\mu+\sqrt{1-\mu^{2}})}{1-\mu^2}+\frac{\chi}{1-\mu^2}+1-\chi)}_{=A_{1}}\varphi_{\mu}^2 \nonumber\\
& & +d\Big(2(1-\chi)-\chi\frac{\tilde{\mu}(\mu+\sqrt{1-\mu^2})}{1-\mu^2}+\frac{\chi}{1-\mu^2}\Big)\varphi_{\mu}\varphi_{\tilde{\mu}}-d^2(1-\chi)\varphi_{\tilde{\mu}}^2.
\end{eqnarray*}
Note that  $U_{\mu}^{-}(x)>0$ is equivalent to $\varphi_{\mu}(x)>d\varphi_{\tilde{\mu}}(x)$, which is again equivalent to
$$
d(1-\chi)\varphi_{\mu}(x)\varphi_{\tilde{\mu}}(x)>d^{2}(1-\chi)\varphi^2_{\tilde{\mu}}(x).
$$
Since $A_{1}>0$, thus for $x\in O$, we have
\begin{eqnarray*}
\mathcal{L}U_{\mu}^{-}(x) & \geq &  dA_{0}\varphi_{\tilde{\mu}}(x) -A_{1}\varphi_{\mu}^2(x) +d\underbrace{\Big((1-\chi)-\chi\frac{\tilde{\mu}(\mu+\sqrt{1-\mu^2})}{1-\mu^2}+\frac{\chi}{1-\mu^2}\Big)}_{A_{2}}\varphi_{\mu}(x)\varphi_{\tilde{\mu}}(x)\nonumber\\
& =& A_{1}\Big(\frac{dA_{0}}{A_{1}}e^{(2\mu-\tilde{\mu})x}-1\Big)\varphi_{\mu}^{2}(x) +dA_{2}\varphi_{\mu}(x)\varphi_{\tilde{\mu}}(x).
\end{eqnarray*}
Note also that, by \eqref{Eq01_Th1},
\begin{eqnarray}\label{Eq1 of Th2}
A_{2}&=&\chi\Big(\frac{1-\chi}{\chi}- \frac{\mu(\mu+\sqrt{1-\mu^2})}{1-\mu^2}\Big)+ \frac{\chi}{1-\mu^2}\Big(1-(\tilde{\mu}-\mu)(\mu +\sqrt{1-\mu^2}) \Big) \nonumber\\
&\geq & \frac{\chi}{1-\mu^2}\Big(1-(\tilde{\mu}-\mu)(\mu +\sqrt{1-\mu^2}) \Big)\ge 0,
\end{eqnarray}
whenever $\tilde{\mu}\leq \mu+ \frac{1}{\mu+\sqrt{1-\mu^2}}$.
Observe that
$$
A_{0}=\frac{(\tilde{\mu}-\mu)(1-\mu\tilde{\mu})}{\mu}>0.
$$
 Furthermore, we have that $U_{\mu}^{-}(x)>0$ implies that $x>0$ for $d\geq 1$. Thus, for every $ d\geq d_{0}:= \max\{1, \frac{A_{1}}{A_{0}}\}$, we have that
\begin{equation}\label{E1}
\mathcal{L}U_{\mu}^{-}(x) > 0
\end{equation}
whenever $x\in O$ and $\tilde{\mu}\leq \min\{2\mu,\mu+ \frac{1}{\mu+\sqrt{1-\mu^2}}\}$. Hence $U(x,t)=U_\mu^-(x)$ is a sub-solution of \eqref{ODE2} on $(\underline{a}_{\mu,\tilde{\mu},d},\infty)\times\R$.

Note that for $0<\delta\ll 1$,
\begin{eqnarray*}
(1-\chi V(x_\delta;u)-(1-\chi)U_\mu^-(x_\delta))U_\mu^-(x_\delta)& \geq & (1-\frac{\chi}{1-\chi}-(1-\chi)U_\mu^-(x_\delta))U_\mu^-(x_\delta) \nonumber\\
&> & 0\quad\forall\,\, x\in\R,
\end{eqnarray*}
whenever $0<\chi<\frac{1}{2}$, where $x_\delta=\underline{a}_{\mu,\tilde{\mu},d}+\delta$. This implies that $U(x,t)=U_\mu^-(x_\delta)$ is
a sub-solution of \eqref{ODE2} on $\R\times\R$.
\end{proof}

\section{Traveling wave solutions}

In this section, we investigate the existence of traveling wave solutions of \eqref{reduced-main-eq} connecting $(1,1)$ and
$(0,0)$ and prove Theorem A. We first prove  the following theorem and then prove Theorem A.

\begin{tm}
\label{existence-tv-thm}
Suppose that  $0<\mu<1 $ and $0<\chi<\frac{1}{2}$ satisfy  \eqref{Eq01_Th1}. Let $c_\mu=\mu+\frac{1}{\mu}$.
Then \eqref{reduced-main-eq} has a traveling wave solution
$(u(x,t),v(x,t))=(U(x-c_\mu t),V(x-c_\mu t))$ satisfying
$$
\lim_{x\to-\infty}U(x)=1,\quad \lim_{x\to\infty}\frac{U(x)}{e^{-\mu x}}=1.
$$

\end{tm}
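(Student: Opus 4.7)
The plan is to construct the traveling wave as a fixed point, via Schauder's theorem, of a self-map of $\mathcal{E}_\mu$ built from the parabolic flow of \eqref{ODE2}. For each $u \in \mathcal{E}_\mu$, form $V(\cdot;u)$ by \eqref{Inverse of u} and let $U(x,t;u)$ be the unique classical global solution of \eqref{ODE2} with initial datum $U(x,0) = U_\mu^+(x)$; this exists by standard linear parabolic theory since, by \eqref{Estimates on Inverse of V}, the coefficients $V(\cdot;u)$ and $V'(\cdot;u)$ are bounded and H\"older continuous. Since $U_\mu^+$ is a super-solution by Theorem \ref{super-sub-solu-thm}(1), a comparison of $U(x,t+h;u)$ with $U(x,t;u)$ yields that $t \mapsto U(x,t;u)$ is non-increasing. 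Theorem \ref{super-sub-solu-thm}(2) supplies two sub-solutions: the positive constant $U_\mu^-(x_\delta)$ valid on all of $\R \times \R$, and the profile $U_\mu^-(x)$ valid on $(\underline{a}_{\mu,\tilde{\mu},d},\infty) \times \R$. Comparison with the constant sub-solution gives $U(x,t;u) \ge U_\mu^-(x_\delta) > 0$ everywhere, and comparison with $U_\mu^-(x)$ on the right half-line (using $U(\underline{a}_{\mu,\tilde{\mu},d},t;u) > 0 = U_\mu^-(\underline{a}_{\mu,\tilde{\mu},d})$ as a boundary value) gives $U(x,t;u) \ge U_\mu^-(x)$ there. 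Since $U_\mu^-$ vanishes to the left of $\underline{a}_{\mu,\tilde{\mu},d}$, we conclude $U_\mu^-(x) \le U(x,t;u) \le U_\mu^+(x)$, so the monotone bounded pointwise limit $\Phi(u)(x) := \lim_{t\to\infty} U(x,t;u)$ exists and lies in $\mathcal{E}_\mu$.

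Endowing $\mathcal{E}_\mu$ with the topology of uniform convergence on compact subsets of $\R$ makes it a closed convex subset of a Fr\'echet space. Interior parabolic $C^{2,\alpha}$-estimates uniform in $u$ show $\Phi(u)$ is a classical solution of the stationary equation
\begin{equation*}
0 = U_{xx} + (c_\mu - \chi V'(x;u))U_x + (1 - \chi V(x;u) - (1-\chi)U)U,
\end{equation*}
and yield precompactness of $\Phi(\mathcal{E}_\mu)$ in this topology. Continuity of $\Phi$ follows from continuous dependence of $V(\cdot;u)$ on $u$ via dominated convergence applied to the kernel in \eqref{Inverse of u}, combined with continuous dependence of bounded solutions of \eqref{ODE2} on their coefficients. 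Schauder's fixed point theorem therefore produces $u^* \in \mathcal{E}_\mu$ with $\Phi(u^*) = u^*$. Setting $U := u^*$ and $V := V(\cdot;u^*)$, the pair $(U(x - c_\mu t), V(x - c_\mu t))$ is a traveling wave solution of \eqref{reduced-main-eq} with speed $c_\mu$ satisfying $U_\mu^- \le U \le U_\mu^+$ and $\inf_x U(x) \ge U_\mu^-(x_\delta) > 0$.

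It remains to identify the limits at $\pm\infty$. At $+\infty$, the sandwich specializes, for $x$ large, to $1 - d\,e^{-(\tilde{\mu} - \mu)x} \le U(x)/e^{-\mu x} \le 1$, giving $\lim_{x \to \infty} U(x)/e^{-\mu x} = 1$. The limit at $-\infty$ is the main obstacle, since $U_\mu^-$ vanishes there. My plan is a translation-and-compactness argument: for any sequence $x_n \to -\infty$, elliptic $C^{2,\alpha}_{\rm loc}$ estimates on \eqref{stationary-eq} produce a $C^2_{\rm loc}$ subsequential limit $(U_\infty, V_\infty)$ of $(U(\cdot + x_n), V(\cdot + x_n))$ that is itself a bounded classical stationary solution of \eqref{stationary-eq} with $\inf U_\infty \ge U_\mu^-(x_\delta) > 0$. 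Then $u(x,t) := U_\infty(x - c_\mu t)$ is a global classical solution of \eqref{IntroEq0-2} with initial datum $U_\infty \in C^b_{\rm unif}(\R)$ bounded below by a positive constant. Invoking statement (ii) recalled from \cite{SaSh} (valid under $0 < \chi < 1/2$), we obtain $\|U_\infty(\cdot - c_\mu t) - 1\|_\infty \to 0$ as $t \to \infty$; but the left-hand side equals $\|U_\infty - 1\|_\infty$ by translation invariance of the sup-norm, forcing $U_\infty \equiv 1$. As $x_n$ was arbitrary, $\lim_{x \to -\infty} U(x) = 1$.
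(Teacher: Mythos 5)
Your overall architecture is the same as the paper's: define $\Phi(u)$ as the monotone $t\to\infty$ limit of the parabolic flow started at $U_\mu^+$, invariance of $\mathcal{E}_\mu$ via the super/sub-solutions of Theorem \ref{super-sub-solu-thm}, Schauder in the locally-uniform topology, sandwiching at $+\infty$, and a translation-plus-compactness argument at $-\infty$ that invokes the stabilization result (ii) from \cite{SaSh}. That all matches, and the $\pm\infty$ arguments are correct. (Minor imprecision: you call \eqref{ODE2} linear, but it is semilinear; global existence still holds by the results of \cite{SaSh}, so this is cosmetic.)

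There is, however, a genuine gap in your continuity argument. You assert that continuity of $\Phi$ ``follows from continuous dependence of $V(\cdot;u)$ on $u$ \ldots\ combined with continuous dependence of bounded solutions of \eqref{ODE2} on their coefficients.'' This is not enough, and the difficulty is exactly the one you cannot black-box: $\Phi(u)$ is defined as a $t\to\infty$ limit of the parabolic solution, and the stationary version of \eqref{ODE2} can in principle possess several bounded nonnegative solutions trapped between $U_{\mu,\delta}^-$ and $U_\mu^+$. Your compactness argument shows that a subsequence of $\Phi(u_n)$ converges locally uniformly to \emph{some} bounded stationary solution with coefficients coming from $u$, but without a uniqueness statement nothing forces that limit to be $\Phi(u)$; the double limit in $n$ and $t$ need not commute. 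The paper closes this gap with a dedicated lemma (Lemma \ref{aux-lm}) proving that the elliptic equation \eqref{Eq_MainLem02} has a \emph{unique} bounded nonnegative solution with $\liminf_{x\to-\infty}U>0$ and $U(x)/e^{-\mu x}\to 1$, via a part-metric contraction argument. That uniqueness lemma, applied to the subsequential limit (which inherits these asymptotics from the sandwiching $U_{\mu,\delta}^-\le \cdot\le U_\mu^+$), is what makes the contradiction argument for continuity go through. Without proving or citing such a uniqueness statement, your Step establishing continuity of $\Phi$ is incomplete, and Schauder's theorem cannot yet be invoked.
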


Our key idea to prove the above theorem is to prove that, for any $\mu>0$ and $0<\chi<\frac{1}{2}$ satisfying \eqref{Eq01_Th1}, there is $u^*(\cdot)\in\mathcal{E}_\mu$ such that
$U=u^*(\cdot)$ is a stationary solution of \eqref{ODE2} and $u^*(-\infty)=1$ and $u^*(\infty)=0$, which implies that
$(u(x,t),v(x,t))=(u^*(x-c_\mu t),V(x-c_\mu t;u^*))$ is a traveling wave solution of \eqref{IntroEq0-2} connecting $(1,1)$ and $(0,0)$.

In order to prove Theorem \ref{existence-tv-thm}, we first prove some lemmas. Fix $u\in\mathcal{E}_\mu$. For given $u_0\in C_{\rm unif}^b(\R)$, let
  $U(x,t;u_0)$ be the solution of \eqref{ODE2} with
$U(x,0;u_0)=u_0(x)$. By the arguments in the proof of Theorem 1.1 and Theorem 1.5 in \cite{SaSh}, we have $U(x,t;U_\mu^+)$ exists for all $t>0$ and
${ U(\cdot,\cdot;U_\mu^+)}\in C([0,\infty),C^{b}_{\rm unif}(\R))\cap C^{1}((0\ ,\ \infty),C^{b}_{\rm unif}(\R))\cap C^{2,1}(\R\times(0,\ \infty))$ satisfying
\begin{equation}
U(\cdot,\cdot;U_\mu^+), U_{x}(\cdot,\cdot;U_\mu^+),U_{xx}(\cdot,t;U_\mu^+),U_{t}(\cdot,\cdot;U_\mu^+)\in  C^{\theta}((0, \infty),C_{\rm unif}^{\nu}(\R))
\end{equation}
for $0<\theta, \nu \ll 1$.

\begin{lem} \label{lm1}
Assume that $0<\mu,\chi<1$ satisfy
\eqref{Eq01_Th1}.
 Then for every $u\in \mathcal{E}_{\mu}$, the following hold.
\begin{description}
\item[(i)] $0\leq U(\cdot,t;U_\mu^+)\leq U_{\mu}^{+}(\cdot)$ for every $t\geq 0.$
\item[(ii)] $U(\cdot,t_{2};U_\mu^+)\leq U(\cdot,t_{1};U_\mu^+) $ for every $0\leq t_{1}\leq t_{2}$
\end{description}
\end{lem}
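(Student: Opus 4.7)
The plan is to rely on the standard parabolic comparison principle applied to the linear-in-$U$ equation \eqref{ODE2} (with $u\in\mathcal{E}_\mu$ fixed, the coefficients $c_\mu-\chi V'(\cdot;u)$ and $1-\chi V(\cdot;u)$ are bounded and uniformly continuous on $\R$ by Lemma~2.2, Lemma~2.3, and \eqref{Estimates on Inverse of V}, so comparison applies to bounded classical sub/super-solutions). Throughout I will use Theorem~\ref{super-sub-solu-thm}(1), which gives that both the constant function $\tfrac{1}{1-\chi}$ and the smooth function $\varphi_\mu$ are super-solutions of \eqref{ODE2} on $\R\times\R$.

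For part (i), first observe that $U(x,t;U_\mu^+)\ge 0$ because the zero function is a (sub-)solution of \eqref{ODE2} and the initial datum $U_\mu^+\ge 0$; the comparison principle yields the lower bound. For the upper bound, I would compare $U(\cdot,\cdot;U_\mu^+)$ with each super-solution separately, \emph{not} directly with $U_\mu^+$ itself (which is only Lipschitz and hence not admissible as a classical super-solution at the corner point $x=\ln(1-\chi)/\mu$). Since the initial datum satisfies $U_\mu^+\le \tfrac{1}{1-\chi}$ pointwise, comparison gives $U(\cdot,t;U_\mu^+)\le \tfrac{1}{1-\chi}$ for all $t\ge 0$; since $U_\mu^+\le\varphi_\mu$ pointwise, comparison gives $U(\cdot,t;U_\mu^+)\le \varphi_\mu(\cdot)$ for all $t\ge 0$. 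Taking the pointwise minimum of these two bounds and recalling the definition of $U_\mu^+$ yields $U(\cdot,t;U_\mu^+)\le U_\mu^+(\cdot)$ for all $t\ge 0$.

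For part (ii), the key is to exploit the fact that equation \eqref{ODE2} is autonomous in $t$ once $u\in\mathcal{E}_\mu$ is fixed, together with the monotonicity of the solution map with respect to initial data (again by comparison, since the equation is semilinear with a locally Lipschitz reaction and bounded coefficients). Fix any $s>0$. By uniqueness and time-translation invariance,
\[
U(x,t+s;U_\mu^+)=U(x,t;U(\cdot,s;U_\mu^+))\quad\forall\,x\in\R,\ t\ge 0.
\]
From part (i) we already know $U(\cdot,s;U_\mu^+)\le U_\mu^+(\cdot)$ pointwise. The comparison principle, applied to the two solutions of \eqref{ODE2} whose initial data are $U(\cdot,s;U_\mu^+)$ and $U_\mu^+$ respectively, then gives $U(x,t+s;U_\mu^+)\le U(x,t;U_\mu^+)$ for all $x\in\R$ and $t\ge 0$. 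Setting $t=t_1$ and $s=t_2-t_1\ge 0$ yields the monotonicity claim.

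The only non-routine point is justifying the comparison principle in this setting; I expect this to be the main obstacle, but it is a routine consequence of the uniform boundedness of the drift $c_\mu-\chi V'(\cdot;u)$ and of the reaction coefficient $1-\chi V(\cdot;u)-(1-\chi)U$ on bounded sets of $U$, plus the uniform continuity estimates cited in Section~2 and the regularity recalled just before Lemma~\ref{lm1} (which ensures both sides are genuine classical sub/super-solutions). Everything else reduces to invoking Theorem~\ref{super-sub-solu-thm}(1) and the autonomy of \eqref{ODE2} in $t$.
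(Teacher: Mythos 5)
Your proof is correct and follows essentially the same route as the paper: for part (i) you compare the solution with each of the two super-solutions $\tfrac{1}{1-\chi}$ and $\varphi_\mu$ from Theorem~\ref{super-sub-solu-thm}(1) separately and take the pointwise minimum, and for part (ii) you use the semigroup/time-translation identity $U(\cdot,t_2;U_\mu^+)=U(\cdot,t_1;U(\cdot,t_2-t_1;U_\mu^+))$ together with part (i) and comparison. The paper's own proof does exactly this; your added remarks (why one cannot compare directly with the merely-Lipschitz $U_\mu^+$, and the explicit use of the zero sub-solution for nonnegativity) are harmless clarifications, not a change of method.
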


\begin{proof}
(i)   Note that $U^{+}_{\mu}(\cdot)\leq  \frac{1}{1-\chi} $. Then by
comparison principle for parabolic equations and Theorem \ref{super-sub-solu-thm}(1), we have
\begin{equation*}
U(x,t;U_\mu^+)\leq  \frac{1}{1-\chi}\quad \forall\ x\in\R,\ t\geq 0.
\end{equation*}

Similarly, note that $U_\mu^+(x)\le\varphi_\mu(x)$.  Then by  comparison principle for parabolic equations and
Theorem \ref{super-sub-solu-thm}(1)  again, we have
\begin{equation*}
U(x,t;U_\mu^+)\leq \varphi_{\mu}(x) \ \quad \forall\ x\in\R\ t\geq 0.
\end{equation*}
Thus $U(\cdot,t;U_\mu^+)\leq U^{+}_{\mu}$. This complete of (i).

(ii)  For $0\leq t_{1}\leq t_{2}$, since
$$
U(\cdot,t_{2};U_\mu^+)=U(\cdot,t_{1},U(\cdot,t_{2}-t_{1};U_\mu^+))
$$
and by (i), $U(\cdot,t_{2}-t_{1};U_\mu^+)\leq U^{+}_{\mu} $, (ii) follows from comparison principle for parabolic equations.
\end{proof}

Let us define $U(x; u)$ to be
\begin{equation}
\label{U-eq}
{
U(x; u)=\lim_{t\to\infty}U(x,t; U^{+}_{\mu})=\inf_{t>0}U(x,t; U^{+}_{\mu}).
}
\end{equation}
 By the a priori estimates for parabolic equations, the limit in \eqref{U-eq} is uniform in $x$ in compact subsets of $\R$
and $U(\cdot;u)\in C_{\rm unif}^b(\R)$.
We shall provide sufficient hypothesis on the choice of $d$ to guarantee that the function $U(\cdot;u)$ constructed above is not identically  zero for each $u\in \mathcal{E}_{\mu}$.
Now, we are ready to prove that the function $u\in \mathcal{E}_{\mu}\to U(\cdot;u)\in\mathcal{E}_{\mu}$ for $d$ large enough.

\begin{lem}\label{lm2}
For every $0 <\chi<  \frac{1}{2}$, $0<\mu<\tilde{\mu}<\min\{1,2\mu, \mu+ \frac{1}{\mu+\sqrt{1-\mu^2}} \}$, there is { $d_{0}>1$ such that}
\begin{equation}
U(x;u)\geq\begin{cases}  U^{-}_{\mu}(x),\quad x\ge \underline{a}_{\mu,\tilde{\mu},d}\cr
U_\mu^-(x_\delta),\quad x\le x_\delta=\underline{a}_{\mu,\tilde{\mu},d}+\delta
\end{cases}
\end{equation}\label{Eq2 of Th2}
for every $u\in\mathcal{E}_{\mu}$,  $t\geq 0$, and $0<\delta\ll 1$,  whenever $d\geq d_{0}$.
\end{lem}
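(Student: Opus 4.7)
\noindent\textbf{Proof plan for Lemma \ref{lm2}.}

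The plan is to derive both lower bounds by comparing the solution $U(\cdot,\cdot;U_{\mu}^+)$ of \eqref{ODE2} with the two sub-solutions supplied by Theorem \ref{super-sub-solu-thm}(2), and then pass to the limit $t\to\infty$ using the definition \eqref{U-eq} of $U(\cdot;u)$. Fix $u\in\mathcal{E}_\mu$ and observe that, for this fixed $u$, equation \eqref{ODE2} is a semilinear parabolic equation in $U$ with bounded coefficients (the bounds on $V(\cdot;u)$ and $V'(\cdot;u)$ coming from \eqref{Estimates on Inverse of V} and Lemma \ref{Mainlem3}). Hence if $\underline{U}$ is a sub-solution of \eqref{ODE2} on some parabolic domain, then $W:=U(\cdot,\cdot;U_{\mu}^+)-\underline{U}$ satisfies a linear parabolic inequality with bounded coefficients, and the usual comparison principle (valid on unbounded cylinders for bounded $W$) forces $W\ge 0$ in the interior once it holds parabolically on the boundary.

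First I would treat the region $x\ge \underline{a}_{\mu,\tilde\mu,d}$. Let $d_0>1$ be as in Theorem \ref{super-sub-solu-thm}(2), and take $d\ge d_0$. Then $\underline{U}(x,t)=U_\mu^-(x)$ is a sub-solution of \eqref{ODE2} on $(\underline{a}_{\mu,\tilde\mu,d},\infty)\times\R$. At $t=0$ we have $U_\mu^+(x)\ge U_\mu^-(x)$ (by construction of the two barriers), at $x=\underline{a}_{\mu,\tilde\mu,d}$ we have $U_\mu^-=0\le U(x,t;U_\mu^+)$ by Lemma \ref{lm1}(i), and as $x\to\infty$ both $U_\mu^-(x)$ and $U(x,t;U_\mu^+)\le U_\mu^+(x)$ decay to $0$. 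The comparison principle therefore yields $U(x,t;U_\mu^+)\ge U_\mu^-(x)$ for all $x\ge\underline{a}_{\mu,\tilde\mu,d}$ and $t\ge 0$; letting $t\to\infty$ and invoking \eqref{U-eq} gives the first inequality of \eqref{Eq2 of Th2}.

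Next I would use this bound to bootstrap to the region $x\le x_\delta$. By Theorem \ref{super-sub-solu-thm}(2), for $0<\delta\ll 1$ the constant $\underline{U}(x,t)\equiv U_\mu^-(x_\delta)$ is a sub-solution of \eqref{ODE2} on $\R\times\R$. Compare $U(\cdot,\cdot;U_\mu^+)$ and $U_\mu^-(x_\delta)$ on the cylinder $(-\infty,x_\delta]\times[0,\infty)$. At $t=0$, since $U_\mu^+$ is non-increasing we have $U_\mu^+(x)\ge U_\mu^+(x_\delta)\ge U_\mu^-(x_\delta)$ for all $x\le x_\delta$. On the lateral boundary $x=x_\delta$, the first step just proved gives $U(x_\delta,t;U_\mu^+)\ge U_\mu^-(x_\delta)$ for all $t\ge 0$, since $x_\delta>\underline{a}_{\mu,\tilde\mu,d}$. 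Since both $U(\cdot,\cdot;U_\mu^+)$ and $U_\mu^-(x_\delta)$ are bounded, the comparison principle on $(-\infty,x_\delta]\times[0,\infty)$ delivers $U(x,t;U_\mu^+)\ge U_\mu^-(x_\delta)$ for $x\le x_\delta$, $t\ge 0$; passing to the limit in $t$ yields the second inequality of \eqref{Eq2 of Th2}.

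The only real obstacle is a technical one: justifying the parabolic comparison principle on the unbounded half-lines $[\underline{a}_{\mu,\tilde\mu,d},\infty)$ and $(-\infty,x_\delta]$ in the presence of the semilinear absorption term $-(1-\chi)U^2$ in \eqref{ODE2}. This is handled by writing the inequality satisfied by the difference $W=U(\cdot,\cdot;U_\mu^+)-\underline{U}$ as a linear parabolic inequality with coefficients that are bounded on $\R\times[0,\infty)$ (using $\|V\|_\infty,\|V'\|_\infty\le\|u\|_\infty$ and the a priori bound $U(\cdot,t;U_\mu^+)\le U_\mu^+\le 1/(1-\chi)$ from Lemma \ref{lm1}(i)) and then applying the standard maximum principle for bounded solutions of such inequalities on unbounded cylinders. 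No additional structural assumption beyond the inequality \eqref{Eq01_Th1} enters, so the $d_0$ produced by Theorem \ref{super-sub-solu-thm}(2) is the threshold sought in the statement.
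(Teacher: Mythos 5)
Your proposal is correct and follows essentially the same two-region comparison argument as the paper: first comparing with the sub-solution $U_\mu^-(x)$ on $(\underline{a}_{\mu,\tilde\mu,d},\infty)$, then bootstrapping the resulting bound at $x=x_\delta$ to compare with the constant sub-solution $U_\mu^-(x_\delta)$ on $(-\infty,x_\delta]$, and finally letting $t\to\infty$ via \eqref{U-eq}. The additional paragraph justifying the comparison principle on unbounded cylinders via the boundedness of $W$ and of the linearized coefficients is a welcome clarification of a point the paper leaves implicit, but the underlying strategy is identical.
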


\begin{proof} Let $u\in \mathcal{E}_{\mu}$ be fixed.  Let $O=(\underline{a}_{\mu,\tilde{\mu},d},\infty)$.
Note that $U_\mu^-(\underline{a}_{\mu,\tilde{\mu},d})=0$. By Theorem \ref{super-sub-solu-thm}(2),
$U_\mu^-(x)$ is a sub-solution of \eqref{ODE2} on $O\times (0,\infty)$ for $d\ge d_0$.
Note also that $U_\mu^+(x)\ge U_\mu^-(x)$ for $x\ge \underline{a}_{\mu,\tilde{\mu},d}$ and $U(\underline{a}_{\mu,\tilde{\mu},d},t;U_\mu^+))>0$
for all $t\ge 0$. Then by comparison principle for parabolic equations, we have that
$$
U(x,t;U_\mu^+)\ge U_\mu^-(x)\quad \forall \,\, x\ge \underline{a}_{\mu,\tilde{\mu},d},\,\, t\ge 0
$$
for $d\ge d_0$.

Now for any $0<\delta\ll 1$, by Theorem \ref{super-sub-solu-thm}(2), $U(x,t)=U_\mu^-(x_\delta)$ is a sub-solution of
\eqref{ODE2} on $\R\times R$. Note that $U_\mu^+(x)\ge U_\mu^-(x_\delta)$ for $x\le x_\delta$ and
$U(x_\delta,t;U_\mu^+)\ge U_\mu^-(x_\delta)$ for $t\ge 0$. Then by comparison principle for parabolic equations again,
$$
U(x,t;U_\mu^+)\ge U_\mu^-(x_\delta)\quad \forall\,\, x\le x_\delta,\, \, t>0.
$$
The lemma then follows.
\end{proof}

\begin{rk}\label{Remark-lower-bound-for -solution}
 It follows from Lemmas \ref{lm1} and \ref{lm2} that if the assumptions of these two lemmas hold, then
$$
U_{\mu,\delta}^{-}(\cdot)\leq U(\cdot,t;U_\mu^+)\leq U^{+}_{\mu}(\cdot)$$
for every $u\in\mathcal{E}_{\mu}$, $t\geq0$ and $0\le \delta\ll 1$, where
$$
U_{\mu,\delta}^-(x)=\begin{cases}  U^{-}_{\mu}(x),\quad x\ge \underline{a}_{\mu,\tilde{\mu},d}+\delta\cr
U_\mu^-(x_\delta),\quad x\le x_\delta=\underline{a}_{\mu,\tilde{\mu},d}+\delta.
\end{cases}
$$
 This implies that $$
U_{\mu,\delta}^{-}(\cdot)\leq U(\cdot;u)\leq U^{+}_{\mu}(\cdot)$$
for every $u\in\mathcal{E}_{\mu}$. Hence  $u\in\mathcal{E}_{\mu}\mapsto U(\cdot;u)\in \mathcal{E}_{\mu}.$
\end{rk}

From now on, we suppose that $0<\mu,\chi<1$ are  fixed and satisfy inequality \eqref{Eq01_Th1}. Next choose $\tilde \mu$ such that
$$\mu<\tilde{\mu}<\min\{1, 2\mu, \mu+ \frac{1}{\mu+\sqrt{1-\mu^2}}\},$$
and take $d\geq d_{0}$,  where $d_{0}$ is given by Lemma \ref{lm2}.   We have the following important result.

\medskip

\begin{lem}\label{MainLem02}
Assume that $0<\mu,\chi<1$ satisfy
\eqref{Eq01_Th1}. Then for every $u\in \mathcal{E}_{\mu}$ the associated function $U(\cdot;u)$ satisfied the elliptic equation,
\begin{equation}\label{Eq_MainLem02}
0=U_{xx}+(c_{\mu}-\chi V'(x;u))U_{x}+(1-\chi V(x;u)-(1-\chi)U)U\quad \forall\,\, x\in\R.
\end{equation}
\end{lem}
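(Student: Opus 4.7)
My plan is to exploit the monotone pointwise convergence $U(x,t;U_\mu^+)\downarrow U(x;u)$ coming from Lemma \ref{lm1}, together with uniform interior parabolic regularity, and to extract \eqref{Eq_MainLem02} as the $t\to\infty$ limit of \eqref{ODE2}. The key conceptual point is that once $u\in\mathcal{E}_\mu$ is fixed, the potential $V(\cdot;u)$ and the drift $c_\mu-\chi V'(\cdot;u)$ depend only on $x$, so \eqref{ODE2} is an autonomous semilinear parabolic equation in a single unknown $U$, with bounded and sufficiently smooth coefficients (by Lemmas \ref{Mainlem2}, \ref{Mainlem3} and \eqref{Estimates on Inverse of V}, $V(\cdot;u)\in C^{2,b}_{\rm unif}(\R)$).

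First I would record the basic ingredients. By Lemma \ref{lm1}(i)--(ii), $0\le U(\cdot,t;U_\mu^+)\le U_\mu^+\le \frac{1}{1-\chi}$ and the map $t\mapsto U(\cdot,t;U_\mu^+)$ is nonincreasing, so the pointwise infimum $U(x;u)$ in \eqref{U-eq} is well defined and bounded. Next, I would apply interior parabolic Schauder estimates to \eqref{ODE2}: since the coefficients are in $C^{1,b}_{\rm unif}(\R)$ with bounds depending only on $\chi$ and $\mu$, and the nonlinearity is locally Lipschitz in $U$, one obtains for every $T\ge 1$ a bound
\begin{equation*}
\|U(\cdot,\cdot;U_\mu^+)\|_{C^{2+\nu,1+\nu/2}(\R\times[T,T+2])}\le C,
\end{equation*}
with $C$ independent of $T$, for some $0<\nu\ll 1$. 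This is a standard consequence of the uniform $L^\infty$ bound on $U$ and the regularity of the frozen coefficients.

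Second, I would translate in time: set $U_n(x,t)=U(x,t+n;U_\mu^+)$ for $n\in\N$. Because \eqref{ODE2} is autonomous in $t$, each $U_n$ still solves \eqref{ODE2} on $\R\times(-n,\infty)$. The uniform bound above plus Arzelà--Ascoli yields a subsequence $U_{n_k}\to U_\infty$ in $C^{2,1}_{\rm loc}(\R\times\R)$. On the other hand, by the monotone pointwise convergence \eqref{U-eq}, $U_{n_k}(x,t)\to U(x;u)$ for every fixed $(x,t)$. Hence $U_\infty(x,t)=U(x;u)$ is independent of $t$, so $U(\cdot;u)\in C^2(\R)$ and $(U_\infty)_t\equiv 0$. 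Passing to the limit in \eqref{ODE2} with $U_{n_k}$ in place of $U$, using the $C^{2,1}_{\rm loc}$ convergence, gives
\begin{equation*}
0=(U_\infty)_t=U_{xx}(x;u)+(c_\mu-\chi V'(x;u))U_x(x;u)+(1-\chi V(x;u)-(1-\chi)U(x;u))U(x;u)
\end{equation*}
for every $x\in\R$, which is precisely \eqref{Eq_MainLem02}.

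The main obstacle is really a bookkeeping one: verifying that the Schauder estimates apply uniformly in the base time $T$. This reduces to checking that the frozen coefficients $c_\mu-\chi V'(x;u)$ and $1-\chi V(x;u)$ have $C^\nu_{\rm unif}(\R)$ norms controlled independently of $u\in\mathcal{E}_\mu$ (which follows from \eqref{Estimates on Inverse of V}, i.e.\ $\|V(\cdot;u)\|_\infty,\|V'(\cdot;u)\|_\infty,\|V''(\cdot;u)\|_\infty\le\|u\|_\infty\le \frac{1}{1-\chi}$) and that the $L^\infty$ bound on $U$ is uniform (which is Lemma \ref{lm1}(i)). With these in hand, the translation-plus-compactness step is automatic and forces the limiting equation to be elliptic.
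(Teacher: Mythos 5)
Your proposal is correct and follows the same overall strategy as the paper: translate in time, extract a $C^{2,1}_{\rm loc}$-convergent subsequence by compactness, identify the limit as the $t$-independent function $U(\cdot;u)$ using the monotone pointwise convergence \eqref{U-eq}, and pass to the limit in \eqref{ODE2} so the time derivative vanishes and \eqref{Eq_MainLem02} drops out. Where you diverge is the source of the compactness. The paper writes $U(\cdot,t;u)$ by the variation-of-constants formula for the semigroup $T(t)$ generated by $\Delta-I$, proves uniform bounds and equicontinuity of $t\mapsto U(\cdot,t;u)$ in the fractional power space $X^\beta$ for $\beta<\tfrac12$ (estimates \eqref{Eq_Convergence01}--\eqref{Eqq03}), and only then appeals to the embedding $X^\beta\hookrightarrow C^\nu_{\rm unif}$ together with Theorem 3.15 of \cite{Friedman} to upgrade to $C^{2,1}_{\rm loc}$ convergence. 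You instead invoke interior parabolic Schauder estimates directly, observing that once $u$ is frozen, \eqref{ODE2} is autonomous with coefficients whose $C^\nu_{\rm unif}$ norms are controlled uniformly in $u\in\mathcal{E}_\mu$ (via \eqref{Estimates on Inverse of V}: $\|V\|_\infty,\|V'\|_\infty,\|V''\|_\infty\le\tfrac{1}{1-\chi}$, so in particular $V'$ is Lipschitz and hence Hölder). Both routes are sound. The paper's semigroup estimates are not wasted, though: the very same bounds \eqref{Eq_Convergence01}--\eqref{Eq_Conv02} are reused in Step 1 of the proof of Theorem \ref{existence-tv-thm} to get compactness of $u\mapsto U(\cdot;u)$, and the displacement bounds \eqref{Eqq00}--\eqref{Eqq03} are of the same flavor as those needed there, so the paper's choice amortizes the work; your Schauder shortcut is leaner for this lemma alone but would need a separate argument to deliver the uniform-in-$u$ modulus of continuity used later.
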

\begin{proof}
Let $\{t_{n}\}_{n\geq 1}$ be an increasing sequence of positive real numbers converging to $\infty$. For every $n\geq 1$, define $U_{n}(x,t)=U(x,t+t_{n};u)$ for every $x\in\R, \ t\geq 0$.
For every $n$, $U_{n}$ solves the PDE
\begin{equation*}
\begin{cases}
\partial_{t}U_{n}=\partial_{xx}U_{n}+(c_{\mu}-\chi V'(x;u))\partial_{x}U_{n}+(1-\chi V(x;u)-(1-\chi)U_{n})U_{n}\ \ x\in\R, \ t>0\\
U_{n}(\cdot,0)=U(\cdot,t_{n};u).
\end{cases}
\end{equation*}

Let $\{T(t)\}_{t\geq 0}$ be the analytic semigroup on $C^{b}_{\rm unif}(\R)$ generated by $\Delta-I$
 and  let $X^{\beta}={\rm Dom}((I-\Delta)^{\beta})$ be the fractional power spaces of $I-\Delta$ on $C_{\rm unif}^b(\R)$ ($\beta\in [0,1]$).

 The variation of constant formula and the fact that $V''(x;u)-V(x;u)=-u(x)$ yield that
\begin{eqnarray}\label{variation -of-const}
U(\cdot,t;u)&=& T(t)U_{\mu}^{+}+ \int_{0}^{t}T(t-s)((c_{\mu}-\chi V'(\cdot;u))U_{x})ds\nonumber \\
& & +\int_{0}^{t}T(t-s)(2-\chi V(\cdot;u))U(\cdot,s;u)ds-(1-\chi)\int_{0}^{t}T(t-s)U^{2}(\cdot,s;u)ds\nonumber\\
&=& T(t)U_{\mu}^{+}+ \int_{0}^{t}T(t-s)(((c_{\mu}-\chi V'(\cdot;u))U)_{x}+\chi V''(\cdot;u)U(\cdot,s;u) )ds\nonumber \\
& & +\int_{0}^{t}T(t-s)(2-\chi V(\cdot;u))U(\cdot,s;u)ds-(1-\chi)\int_{0}^{t}T(t-s)U^{2}(\cdot,s;u)ds\nonumber\\
&=& \underbrace{T(t)U_{\mu}^{+}}_{I_{1}(t)}+ \underbrace{\int_{0}^{t}T(t-s)(((c_{\mu}-\chi V'(\cdot;u))U)_{x})ds}_{I_{2}(t)}\nonumber \\
& & +\underbrace{\int_{0}^{t}T(t-s)(2-\chi u)U(\cdot,s;u)ds}_{I_{3}(t)}-(1-\chi)\underbrace{\int_{0}^{t}T(t-s)U^{2}(\cdot,s;u)ds}_{I_{4}(t)}.
\end{eqnarray}
Let $0<\beta<\frac{1}{2}$ be fixed.  We have that
\begin{equation*}
\|I_{1}(t)\|_{X^{\beta}}\leq C_{\beta}t^{-\beta}e^{-t}\|U_{\mu}^{+}\|_{\infty}=\frac{C}{1-\chi}t^{-\beta}e^{-t}.
\end{equation*}
Next, using inequality  (3.1) in \cite{SaSh} , we have that
\begin{eqnarray*}
\|I_{2}(t)\|_{X^{\beta}} & \leq & C_{\beta}\int_{0}^{t}(t-s)^{-\frac{1}{2}-\beta}e^{-(t-s)}\| (c_{\mu}-\chi V'(\cdot;u))U(\cdot,s;u)\|_{\infty}\nonumber\\
& \leq & \frac{C_{\beta}}{1-\chi}(c_{\mu}+\frac{\chi}{1-\chi})\int_{0}^{t}(t-s)^{-\beta-\frac{1}{2}}e^{-(t-s)}ds\nonumber\\
&\leq & \frac{C_{\beta}}{1-\chi}(c_{\mu}+\frac{\chi}{1-\chi})\Gamma(\frac{1}{2}-\beta).
\end{eqnarray*}
And
\begin{eqnarray*}
\|I_{3}(t)\|_{X^{\beta}}& \leq & C_{\beta}\int_{0}^{t}(t-s)^{-\beta}e^{-(t-s)}\|(2-\chi u)U(\cdot,s;u)\|_{\infty}ds\nonumber\\
 & \leq & \frac{C_{\beta}}{1-\chi}(2+\frac{\chi}{1-\chi})\int_{0}^{t}(t-s)^{-\beta}e^{-(t-s)}ds\nonumber\\
 & \leq & \frac{C_{\beta}}{1-\chi}(2+\frac{\chi}{1-\chi})\Gamma(1-\beta).
\end{eqnarray*}
Similar arguments yield that
\begin{equation*}
\|I_{4}(t)\|_{X^{\beta}}\leq \frac{C_{\beta}}{(1-\chi)^{2}}\Gamma(1-\beta).
\end{equation*}
Therefore, for every $T>0$ we have that
\begin{equation}\label{Eq_Convergence01}
\sup_{t\geq T}\|U(\cdot,t;u)\|_{X^{\beta}}\leq M_{T}<\infty,
\end{equation}
where
\begin{equation}\label{Eq_Conv02}
M_{T}= \frac{C_{\beta}}{1-\chi}\Big[T^{-\beta}e^{-T}+ (c_{\mu}+\frac{1}{1-\chi})(2\Gamma(1-\beta)+\Gamma(\frac{1}{2}-\beta))\Big].
\end{equation}
Hence it follows that
\begin{equation}\label{Eqq000}
\sup_{n\geq 1, t\geq 0}\|U_{n}(\cdot,t)\|_{X^{\beta}}\leq M_{t_{1}}<\infty.
\end{equation}

Next, for every $t,h\geq 0$ and $n\geq 1$, we have that
\begin{equation}\label{Eqq00}
\|I_{1}(t+h+t_{n})-I_{1}(t+t_{n})\|_{X^{\beta}}\leq C_{\beta}h^{\beta}(t+t_{n})^{-\beta}e^{-(t+t_n)}\|U_{\mu}^{+}\|_{\infty}\leq C_{\beta}h^{\beta}t_{1}^{-\beta}e^{-t_1}\|U_{\mu}^{+}\|_{\infty},
\end{equation}
\begin{eqnarray}\label{Eqq01}
&&\|I_{2}(t+h+t_n)-I_{2}(t+t_n)\|_{X^{\beta}}\nonumber\\
& &\leq  \int_{0}^{t+t_n}\|(T(h)-I)T(t+t_n-s)( ( (c_{\mu}-\chi V'(\cdot,s;u))U(\cdot,s;u) )_{x} )\| _{X^{\beta}}ds\nonumber\\
& &\,\,\,  +\int_{t+t_n}^{t+t_n+h}\|T(t+t_n+h-s)( ( (c_{\mu}-\chi V'(\cdot,s;u))U(\cdot,s;u) )_{x} )\|_{X^{\beta}}ds\nonumber\\
& &\leq  C_{\beta}h^{\beta}\int_{0}^{t+t_n}(t+t_n-s)^{-\beta-\frac{1}{2}} e^{-(t+t_n-s)}\|(c_{\mu}-\chi V'(\cdot,s;u))U(\cdot,s;u) \|_{\infty}ds\nonumber\\
& & \,\,  + C_{\beta}\int_{t+t_n}^{t+t_n+h}(t+t_n +h-s)^{-\beta-\frac{1}{2}}e^{-(t+t_n +h-s)}\|(c_{\mu}-\chi V'(\cdot,s;u))U(\cdot,s;u)\|_{\infty}ds \nonumber\\
&& \leq \frac{C_{\beta}}{1-\chi}(c_{\mu}+\frac{\chi}{1-\chi})\Big[h^{\beta}\Gamma(\frac{1}{2}-\beta)+\int_{t+t_n}^{t+t_n+h}(t+t_n+h-s)^{-\beta-\frac{1}{2}}e^{-(t+t_n+h-s)}ds \Big]\nonumber\\
& &\leq \frac{C_{\beta}}{1-\chi}(c_{\mu}+\frac{\chi}{1-\chi})\Big[h^{\beta}\Gamma(\frac{1}{2}-\beta)+\frac{h^{\frac{1}{2}-\beta}}{\frac{1}{2}-\beta}\Big],
\end{eqnarray}
\begin{eqnarray}\label{Eqq02}
\|I_{3}(t+t_n+h)-I_{3}(t+t_n)\|_{X^{\beta}}& \leq & \int_{0}^{t+t_n}\|(T(h)-I)T(t+t_n-s)((2-\chi u)U(\cdot,s;u))\|_{X^{\beta}}ds\nonumber\\
& & +\int_{t+t_n}^{t+t_n+h}\|T(t+t_n+h-s)((2-\chi u)U(\cdot,s;u))\|_{X^{\beta}}ds \nonumber\\
&\leq &\frac{C_{\beta}}{1-\chi}(2+\frac{\chi}{1-\chi})\Big[h^{\beta}\Gamma(1-\beta)+\frac{h^{1-\beta}}{1-\beta} \Big] ,
\end{eqnarray}
and
\begin{eqnarray}\label{Eqq03}
\|I_{4}(t+t_n+h)-I_{4}(t+t_n)\|_{X^{\beta}}&\leq &\int_{0}^{t+t_n}\|(T(h)-I)T(t+t_n-s)U^{2}(\cdot,s;u)\|_{X^{\beta}}ds \nonumber\\
& &+\int_{t+t_n}^{t+t_n+h}\|T(t+t_n+h-s)U^{2}(\cdot,s;u)\|_{X^{\beta}}ds\nonumber\\
& \leq &\frac{C_{\beta}}{(1-\chi)^2}\Big[h^{\beta}\Gamma(1-\beta)+\frac{h^{1-\beta}}{1-\beta} \Big].
\end{eqnarray}
It follows from inequalities \eqref{Eqq000}, \eqref{Eqq00}, \eqref{Eqq01}, \eqref{Eqq02} and \eqref{Eqq03}, the functions $U_{n} : [0, \infty)\to X^{\beta}$ are uniformly bounded and equicontinuous.
Since $X^{\beta}$ is continuously imbedded in $C^{\nu}(\R)$ for every $0\leq \nu<2\beta$ (See \cite{Dan Henry}),
% then
%$$
%\sup_{n}\|U_{n}(\cdot,0)\|_{C^{\nu}_{\rm unif}}<\infty
%$$
%for every $0\leq \nu<{ 1}$.
therefore, the Arzela-Ascoli Theorem and Theorem 3.15 in   \cite{Friedman}, imply that there is a function $\tilde{U}(\cdot,\cdot;u)\in C^{2,1}(\R\times(0,\infty))$ and a subsequence $\{U_{n'}\}_{n\geq 1}$ of $\{U_{n}\}_{n\geq 1}$ such that $U_{n'}\to \tilde{U}$ in $C^{2,1}_{loc}(\R\times(0, \infty))$ as $n\to \infty$ and $\tilde{U}(\cdot,\cdot;u)$ solves the PDE
$$
\begin{cases}
\partial_{t}\tilde{U}=\partial_{xx}\tilde{U}+(c_{\mu}-\chi V'(x;u))\partial_{x}\tilde{U}+(1-\chi V(x;u)-(1-\chi)\tilde{U})\tilde{U}\ \ x\in \R\ , \ t>0\\
\tilde{U}(x,0)=\lim_{n\to \infty}U(x,t_{n'};u).
\end{cases}
$$
But $U(x;u)=\lim_{t\to \infty}U(x,t;u)$ and $t_{n'}\to \infty$ as $n\to \infty$, hence $\tilde{U}(x,t;u)=U(x;u)$ for every $x\in \R,\ t\geq 0$. Hence $U(\cdot;u)$ solves \eqref{Eq_MainLem02}.
\end{proof}

\begin{lem}
\label{aux-lm} Assume that $0<\mu<1$ and $0<\chi<\frac{1}{2}$ satisfying that \eqref{Eq01_Th1}. Then, for any given $u\in\mathcal{E}_\mu$,
\eqref{Eq_MainLem02} has a unique bounded non-negative solution satisfying that
\begin{equation}
\label{aux-eq1}
\liminf_{x\to -\infty}U(x)>0\quad {\rm and}\quad \lim_{x\to\infty}\frac{U(x)}{e^{-\mu x}}=1.
\end{equation}
\end{lem}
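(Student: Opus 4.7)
The plan is to settle existence first and then uniqueness, which is the delicate part.

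For existence, I would simply take $U(x):=U(x;u)$ from \eqref{U-eq}. Lemma \ref{MainLem02} already establishes that this function solves \eqref{Eq_MainLem02}. The two boundary behaviors in \eqref{aux-eq1} follow immediately from the pointwise sandwich $U_{\mu,\delta}^-(x)\le U(x;u)\le U_\mu^+(x)$ of Remark \ref{Remark-lower-bound-for -solution}. Indeed, for $x$ sufficiently large, $U_\mu^+(x)=e^{-\mu x}$ while $U_{\mu,\delta}^-(x)=e^{-\mu x}-de^{-\tilde{\mu} x}$; dividing by $e^{-\mu x}$ and letting $x\to+\infty$ (using $\tilde{\mu}>\mu$) gives $U(x;u)/e^{-\mu x}\to 1$. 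For $x\le x_\delta$, the function $U_{\mu,\delta}^-(x)$ equals the positive constant $U_\mu^-(x_\delta)$, so $\liminf_{x\to-\infty}U(x;u)\ge U_\mu^-(x_\delta)>0$.

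For uniqueness, let $U_1,U_2$ be two bounded non-negative solutions of \eqref{Eq_MainLem02} satisfying \eqref{aux-eq1}. The strong elliptic maximum principle, applied after absorbing the semilinear term into a bounded zero-order coefficient, forces $U_i(x)>0$ for all $x$, so the ratio $\phi:=U_1/U_2$ is well defined, and using the boundedness of $U_1$, the strict positivity of $\liminf_{-\infty}U_2$, and the matched leading order at $+\infty$, it is bounded on $\R$. A direct calculation using the two copies of \eqref{Eq_MainLem02} gives
\begin{equation*}
\phi_{xx}+\Bigl(\frac{2U_{2,x}}{U_2}+c_\mu-\chi V'(x;u)\Bigr)\phi_x+(1-\chi)\,U_2(x)\,\phi(1-\phi)=0.
\end{equation*}
Since $U_i(x)e^{\mu x}\to 1$, we have $\phi(x)\to 1$ as $x\to+\infty$. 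If $\phi$ attains a supremum $>1$ at an interior point $x_0$, then $\phi_x(x_0)=0$ and $\phi_{xx}(x_0)\le 0$, and the equation forces $(1-\chi)U_2(x_0)\phi(x_0)(1-\phi(x_0))\ge 0$, contradicting $U_2(x_0)>0$, $1-\chi>0$ and $\phi(x_0)>1$. The symmetric argument applied to $U_2/U_1$ then gives $\phi\equiv 1$ and hence $U_1\equiv U_2$.

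The principal obstacle is that $\liminf U_i>0$ at $-\infty$ does not guarantee convergence, so the supremum of $\phi$ may only be approached along a sequence $x_n\to-\infty$ rather than attained. To close the argument I would use a translation-compactness step: set $U_i^{(n)}(x):=U_i(x+x_n)$; the boundedness of $U_i$ together with the uniform estimates on $V(\cdot;u)$ and $V'(\cdot;u)$ from Lemmas \ref{Mainlem2} and \ref{Mainlem3} yields uniform $C^{2,\alpha}_{\mathrm{loc}}$ bounds, and the shifted coefficients $V(\cdot+x_n;u)$, $V'(\cdot+x_n;u)$ converge along a subsequence in $C^1_{\mathrm{loc}}$ since they are defined as convolutions of the bounded $u$. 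Passing to the limit produces bounded positive entire solutions $U_1^{(\infty)},U_2^{(\infty)}$ of a limiting equation of the same form, whose ratio $\phi_\infty$ attains its supremum $>1$ at $0$, reducing to the interior case above. This compactness step, together with the fact that $\chi<\tfrac{1}{2}$ is precisely what guarantees that $U_\mu^-(x_\delta)$ is a global sub-solution and hence that $\liminf_{-\infty} U_i>0$ is robust along translations, is the technical heart of the uniqueness proof.
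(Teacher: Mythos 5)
Your proof is correct, and the uniqueness part takes a genuinely different route from the paper. (The existence part simply records what the paper's construction already gives: $U(\cdot;u)$ from \eqref{U-eq} solves \eqref{Eq_MainLem02} by Lemma \ref{MainLem02}, and the two-sided bound $U_{\mu,\delta}^-\le U(\cdot;u)\le U_\mu^+$ of Remark \ref{Remark-lower-bound-for -solution} yields both limits in \eqref{aux-eq1} by a direct squeeze.) For uniqueness, the paper's argument is dynamical: it introduces the part metric $\rho(U_1,U_2)=\ln\alpha$, uses that each $U_i$ is a fixed point of the parabolic flow \eqref{ODE2}, observes that $\alpha U_i$ is a strict super-solution, and then combines the matched asymptotics at $+\infty$ with an $e^{\varepsilon t}$-amplified super-solution on $(-\infty,R_{\alpha'}]$ to show $\rho$ strictly decreases after time one, forcing $\alpha=1$. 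You instead argue entirely at the elliptic level: after establishing strict positivity of $U_2$ via the strong maximum principle, you show the ratio $\phi=U_1/U_2$ solves a second-order equation whose zero-order term $(1-\chi)U_2\,\phi(1-\phi)$ has the logistic sign structure with strictly positive coefficient, rule out $\sup\phi>1$ at an interior maximum, and handle the only escape (the supremum approached along $x_n\to-\infty$, since $\phi\to 1$ at $+\infty$) by translating, extracting limits under the uniform $C^2$ bounds on $U_i$ and on $V(\cdot;u)$, $V'(\cdot;u)$, and reducing to the interior case for the limiting pair. Both arguments are sound; yours is a cleaner, purely elliptic sliding argument that avoids the parabolic semigroup and the part-metric machinery, while the paper's version integrates naturally with the monotone-semiflow setup it has already built for the fixed-point construction. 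One small misattribution: the hypothesis $\chi<\tfrac12$ is not what makes the lower bound at $-\infty$ stable under translation (that follows from the liminf condition, boundedness, and compactness alone); $\chi<\tfrac12$ is needed earlier so that $U_\mu^-(x_\delta)$ is a global sub-solution in Theorem \ref{super-sub-solu-thm}(2), hence for the existence side, not the uniqueness side. This does not affect the validity of your proof.
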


\begin{proof}
First, note that for any two $U_1,U_1\in C_{\rm unif}^b(\R)$ satisfying \eqref{aux-eq1} and that $U_i(x)>0$ for $x\in\R$, we can define the so called
part metric $\rho(U_1,U_2)$ as follows:
$$
\rho(U_1,U_2)=\inf\{\L\ln \alpha\,|\, \alpha\ge 1,\,\, \frac{1}{\alpha} U_1(x)\le U_2(x)\le \alpha U_1(x)\quad \forall\,\, x\in\R\}.
$$
Moreover, there is $\alpha\ge 1$ such that
$$
\rho(U_1,U_2)=\ln \alpha\quad {\rm and}\quad \frac{1}{\alpha} U_1(x)\le U_2(x)\le \alpha U_1(x)\quad \forall\,\, x\in\R.
$$

Next, fix $u\in \mathcal{E}_\mu$. Suppose that $U_1(x)$ and $U_2(x)$ are two solutions of
\eqref{Eq_MainLem02} satisfying \eqref{aux-eq1}.
Let $\alpha\ge 1$ be such that $\rho(U_1,U_2)=\ln \alpha$. Note that $U(x,t;U_i)=U_i$ for all $t\ge 0$ and every $i=1,2$. Hence
$$
\rho(U(\cdot,t;U_1),U(\cdot,t;U_2))=\ln\alpha\quad \forall \,\, t\ge 0.
$$
Assume that $\alpha>1$.
Note that
\begin{equation*}
\frac{1}{\alpha} U_1(x)\le U_2(x)\le \alpha U_1(x)\quad \forall\,\, x\in\R
\end{equation*}
and
$$
(\alpha U_{i})_{t}> (\alpha U_{i})_{xx}+(c_{\mu}-\chi V'(\cdot;u))(\alpha U_{i})_{x}+(1-\chi V(\cdot;u)-(1-\chi)(\alpha U_{i}))(\alpha U_{i})
$$
for $i=1,2$. Thus comparison principle for parabolic equations implies that
\begin{equation}\label{aux-eq2}
\begin{cases}
U_{2}(x)\leq U(x,t,\alpha U_{1})< \alpha U_{1}(x)\quad \forall\ x\in\R,\ t> 0\cr
U_{1}(x)\leq U(x,t,\alpha U_{2})< \alpha U_{2}(x)\quad \forall\ x\in\R,\ t> 0.
\end{cases}
\end{equation}
Since $U_{i}(x)>0$ for every $x\in\R$ and $\lim_{x\to \infty}\frac{U_{i}(x)}{e^{-\mu x}}=1$ for each $i=1,2$, then  for every $1<\alpha'<\alpha$, there is $R_{\alpha'}\gg 1$ such that
\begin{equation}\label{aux-eq3}
U_{2}(x)< \alpha' U_{1}(x),\quad U_1(x)<\alpha' U_2(x) \quad \forall\ x\geq R_{\alpha'}.
\end{equation}
Since $U_{i}(x)>0$ for every $x\in\R$ and $\liminf_{x\to-\infty}U_{i}(x)>0$ for each $i=1,2$, then
\begin{equation}\label{aux-eq4}
l_{\alpha'}:=\min\{\inf_{x\leq R_{\alpha'}}U_{1}(x), \inf_{x\leq R_{\alpha'}}U_{2}(x)\}>0 , \ \ \quad \forall\ 1<\alpha'<\alpha.
\end{equation}
For every $1<\alpha'<\alpha$, $i=1,2$ and $x\leq R_{\alpha'}$, we have
\begin{eqnarray}\label{aux-eq5}
(\alpha U_{i})_{t}& = & (\alpha U_{i})_{xx}+(c_{\mu}-\chi V'(x;u))(\alpha U_{i})_{x}\nonumber\\
&& \,\, +(1-\chi V(x;u)-(1-\chi)(\alpha U_{i}))(\alpha U_{i}) +(1-\chi)(\alpha-1)U_{i}(\alpha U_{i})\nonumber\\
& \geq & (\alpha U_{i})_{xx}+(c_{\mu}-\chi V'(x;u))(\alpha U_{i})_{x}\nonumber\\
&& \,\, +(1-\chi V(x;u)-(1-\chi)(\alpha U_{i}))(\alpha U_{i}) +(1-\chi)(\alpha-1)l_{\alpha'}(\alpha U_{i}).
\end{eqnarray}
On the other hand, if we set $W^{i}(x,t)=e^{\varepsilon t}U(x,t;\alpha U_{i})$, it follows from \eqref{aux-eq2} that
\begin{eqnarray*}
W^i_{t}& = &\varepsilon W^i{ +e^{\varepsilon t}U_t(x,t;\alpha U_i)}\nonumber\\
     &= & \varepsilon W^i +W^i_{xx}+(c_\mu-\chi V'(x;u))W_x^i+(1-\chi V(x;u)-(1-\chi)W^i)W^i \nonumber\\
     && \,\, +(1-\chi)(e^{\varepsilon t}-1)U(x,t;\alpha U_{i})W^i\nonumber\\
&\leq & W^i_{xx}+(c_\mu -\chi V'(x;u))W_x^i+(1-\chi V(x;u)-(1-\chi)W^i)W^i +\varepsilon W^i \nonumber\\
&&\,\, +\alpha(1-\chi)(e^{\varepsilon t}-1)U_{i}W^{i}\nonumber\\
&\leq & W^i_{xx}+(c_\mu-\chi V'(x;u))W_x^i+(1-\chi V(x;u)-(1-\chi)W^i)W ^i\nonumber\\
&&\,\, +\Big(\varepsilon  +\alpha(1-\chi)(e^{\varepsilon t}-1)L_{\alpha'}\Big)W^{i},
\end{eqnarray*}
where
$$
L_{\alpha'}=\max\{\sup_{x\leq R_{\alpha'}}U_{1}(x),\sup_{x\leq R_{\alpha'}}U_{2}(x) \}.
$$
Choose $0<\varepsilon\ll 1$ such that
$$
\varepsilon  +\alpha(1-\chi)(e^{\varepsilon t}-1)L_{\alpha'}< (1-\chi)(\alpha-1)l_{\alpha'}\quad 0\leq t\leq 1.
$$
Thus, for $x\leq R_{\alpha'}$ and $0\leq t\leq 1$ we have
\begin{equation}\label{aux-eq6}
W^i_{t}\leq W^i_{xx}+(c_\mu-\chi V'(x;u))W_x^i+(1-\chi V(x;u)-(1-\chi)W^i)W^i+(1-\chi)(\alpha-1)l_{\alpha'}W^i.
\end{equation}
But inequality \eqref{aux-eq2} implies that
$U(R_{\alpha'},t;\alpha U_{i})<\alpha U_{i}(R_{\alpha'})$
for every $t>0$ and $i=1,2$. So, choose $0<\varepsilon\ll 1$ such that
\begin{eqnarray}\label{aux-eq7}
W^{i}(R_{\alpha'},t)=e^{\varepsilon t}U(R_{\alpha'},t;\alpha U_{i})\leq \alpha U_{i}(R_{\alpha'})\quad \frac{1}{2}\leq t\leq 1, \ \ i=1,2.
\end{eqnarray}
Therefore, using comparison principle for parabolic equations, it follows from inequalities \eqref{aux-eq5}, \eqref{aux-eq6} and \eqref{aux-eq7} that
$$
W^{i}(x,t)=e^{\varepsilon t}U(x,t;\alpha U_{i})\leq\alpha U_{i}(x)\quad \forall \ x\leq R_{\alpha'},\ \frac{1}{2}\leq t\leq 1, \ i=1,2.
$$
for $0<\varepsilon\ll 1$. Hence there is $0<\varepsilon_{0}\ll 1$ such that
$$
U(x,1;\alpha U_{i})\leq e^{-\varepsilon} \alpha U_{i}(x)\quad \forall \ x\leq R_{\alpha'}, \ i=1,2.
$$
Combining this with \eqref{aux-eq2}, we obtain
that
\begin{equation}\label{aux-eq8}
\begin{cases}
U_{2}(x)\leq e^{-\varepsilon_{0}}\alpha U_{1}(x)\quad x\leq R_{\alpha'}\cr
U_{1}(x)\leq e^{-\varepsilon_{0}}\alpha U_{2}(x)\quad x\leq R_{\alpha'}.
\end{cases}
\end{equation}
Combining inequalities \eqref{aux-eq4} and \eqref{aux-eq8} we have that
$$
\frac{1}{\max\{\alpha', e^{-\varepsilon_{0}}\alpha\}}U_{1}(x)\leq U_{2}(x)\leq \max\{\alpha', e^{-\varepsilon_{0}}\alpha\}U_{1}(x)\quad \forall x\in\R.
$$
From what it follows that
$$
\alpha\leq \max\{\alpha', e^{-\varepsilon_{0}}\alpha\}<\alpha,
$$
which is a contradiction. Hence $\alpha=1$ and then $U_1=U_2$. The lemma is thus proved.
\end{proof}

We now prove Theorem \ref{existence-tv-thm}.

\begin{proof}[Proof of Theorem \ref{existence-tv-thm}]
First of all,
 let us consider the normed linear space  $\mathcal{E}=C^{b}_{\rm unif}(\R)$ endowed with the norm
$$\|u\|_{\ast}=\sum_{n=1}^{\infty}\frac{1}{2^n}\|u\|_{L^{\infty}([-n,\ n])}. $$
For every $u\in\mathcal{E}_{\mu}$ we have that
$$\|u\|_{\ast}\leq \frac{1}{1-\chi}. $$
Hence $\mathcal{E}_{\mu}$ is a bounded convex subset of $\mathcal{E}$. Furthermore, since the convergence in $\mathcal{E}$ implies the pointwise convergence, then $\mathcal{E}_{\mu}$ is a closed, bounded, and convex subset of $\mathcal{E}$. Furthermore, a sequence of functions in $\mathcal{E}_{\mu}$ converges with respect to norm $\|\cdot\|_{\ast}$ if and only if it  converges locally uniformly convergence on $\R$.

We prove that the mapping $\mathcal{E}_\mu\ni u\mapsto U(\cdot;u)$ has a fixed point. We divide the proof in two steps.

\smallskip

\noindent {\bf Step 1.} In this step, we prove that the mapping $\mathcal{E}_\mu\ni u\mapsto U(\cdot;u)$ is compact.

 Let $\{u_{n}\}_{n\geq 1}$ be a sequence of elements of $\mathcal{E}_{\mu}$. Since $U(\cdot;u_{n})\in \mathcal{E}_{\mu}$ for every $n\geq 1$ then $\{U(\cdot;u_{n})\}_{n\geq 1}$ is clearly uniformly bounded by $\frac{1}{1-\chi}$. Using inequality \eqref{Eq_Convergence01}, we have that
\begin{equation*}
\sup_{t\geq 1}\|U(\cdot,t;u_{n})\|_{X^{\beta}}\leq M_{1}
\end{equation*}
for all $n\geq 1$ where $M_{1}$ is given by \eqref{Eq_Conv02}. Therefore there is $0<\nu\ll 1$ such that
\begin{equation}\label{Proof-MainTh3- Eq1}
\sup_{t\geq 1}\|U(\cdot,t;u_{n})\|_{C^{\nu}_{\rm unif}(\R)}\leq \tilde{M_{1}}
\end{equation} for every $n\geq 1$ where $\tilde{M_{1}}$ is a constant depending only on $M_{1}$. Since for every $n\geq 1$ and every $x\in\R$, we have that $U(x,t;u_{n})\to U(x;u_{n})$ as $t\to \infty,$ then it follows from \eqref{Proof-MainTh3- Eq1} that
\begin{equation}\label{Prof-MainTh3- Eq2}
\|U(\cdot;u_{n})\|_{C^{\nu}_{\rm unif}}\leq \tilde{M_{1}}
\end{equation} for every $n\geq 1$. Which implies that the sequence $\{U(\cdot;u_{n})\}_{n\geq 1}$ is equicontinuous. The Arzela-Ascoli's Theorem implies that there is a subsequence $\{U(\cdot;u_{n'})\}_{n\geq 1}$ of the sequence $\{U(\cdot;u_{n})\}_{n\geq 1}$ and a function $U\in C(\R)$ such that $\{U(\cdot;u_{n'})\}_{n\geq 1}$ converges to $U$ locally uniformly on $\R$. Furthermore, the function $U$ satisfies inequality \eqref{Prof-MainTh3- Eq2}. Combining this with the fact  $U_{\mu}^{-}(x)\leq U(x;u_{n'})\leq U_{\mu}^{+}(x)$ for every $x\in\R$ and $n\geq 1$, by letting $n$ goes to infinity, we obtain that  $U\in \mathcal{E}_{\mu}$.

\smallskip

\noindent{\bf Step 2.} In this step, we prove that the mapping $\mathcal{E}_\mu\ni u\mapsto U(\cdot;u)$ is continuous.

Let $u\in \mathcal{E}_{\mu}$ and $\{u_{n}\}_{n\geq 1}\in \mathcal{E}_{\mu}^{\N}$ such that $\|u_{n}-u\|_{\ast}\to 0$ as $n\to \infty$. Suppose by contradiction that $\|U(\cdot;u_{n})-U(\cdot;u)\|_{\ast}$ does not converge to zero. Hence there is $\delta>0$ and a subsequence $\{u_{n_{1}}\}_{n\geq 1}$ such that
\begin{equation}\label{Prof-MainTh3- Eq3}
\|U(\cdot;u_{n_{1}})-U(\cdot;u)\|_{\ast}\geq \delta \quad \forall \ n\geq 1.
\end{equation}
 For every $n\geq 1$, we have that $U(\cdot,u_{n_{1}})$ satisfies
 \begin{align}\label{Prof-MainTh3- Eq4}
 0=&U_{xx}(x;u_{n_{1}})+(c_{\mu}-\chi V(x;u_{n_{1}}))U_{x}(x;u_{n_{1}})\nonumber\\
 &\,\, +(1-\chi V(x;u_{n_{1}})-(1-\chi)U(x;u_{n_{1}}))U(x;u_{n_{1}})\quad \forall\,\, x\in\R.
 \end{align}

{\bf Claim 1.} $\|V(\cdot;u_{n})-V(\cdot;u)\|_{\ast}\to 0$ as $n\to \infty$. Indeed, for every $R>0$, it follows from \eqref{Inverse of u} that
\begin{align}\label{Prof-MainTh3- Eq5}
|V(x;u_{n})-V(x;u)| & \leq  \frac{1}{\sqrt{\pi}}\int_{0}^{\infty}\int_{\R}e^{-s}e^{-z^2}|u_{n}(x-2\sqrt{t}z)-u(x-2\sqrt{s}z)|dzds\nonumber\\
& \leq \frac{1}{\sqrt{\pi}}\int_{0}^{R}\int_{B(0,R)}e^{-s}e^{-z^2}|u_{n}(x-2\sqrt{t}z)-u(x-2\sqrt{s}z)|dzds   \nonumber\\
& + \frac{2}{(1-\chi)\sqrt{\pi}}\int_{ \{ s\geq R\  \text{or}\ |z|\geq R\}}e^{-s}e^{-z^2}dzds.
\end{align}
Thus for every $k\in\N$ and every $R>1$, we have that
\begin{align}\label{Prof-MainTh3- Eq6}
\|V(\cdot;u_{n})-V(\cdot;u)\|_{L^{\infty}([-k\ ,k ])}& \leq \frac{1}{\sqrt{\pi}}\Big[ \int_{0}^{R}\int_{B(0,R)}e^{-s}e^{-z^2}dzds \Big] \|u_{n}-u\|_{L^{\infty}([-(k+2R^{\frac{3}{2}})\ ,\ (k+2R^{\frac{3}{2}}) ])}\nonumber\\
&\,\,\,\, +  \frac{2}{(1-\chi)\sqrt{\pi}}\int_{ \{ s\geq R\  \text{or}\ |z|\geq R\}}e^{-s}e^{-z^2}dzds   \nonumber\\
& \leq  \frac{1}{\sqrt{\pi}}\underbrace{\Big[ \int_{0}^{\infty}\int_{\R}e^{-s}e^{-z^2}dzds \Big]}_{\sqrt{\pi}} \|u_{n}-u\|_{L^{\infty}([-(k+2R^{\frac{3}{2}})\ ,\ (k+2R^{\frac{3}{2}}) ])}\nonumber\\
&\,\,\,\, +  \frac{2}{(1-\chi)\sqrt{\pi}}\int_{ \{ s\geq R\  \text{or}\ |z|\geq R\}}e^{-s}e^{-z^2}dzds   \nonumber\\
& \leq  2^{k+2R^2}\|u_{n}-u\|_{\ast} + \frac{2}{(1-\chi)\sqrt{\pi}}\int_{ \{ s\geq R\  \text{or}\ |z|\geq R\}}e^{-s}e^{-z^2}dzds.
\end{align}
Now, let $\varepsilon>0$ be given. Choose $R\gg 1$ and $k\gg 1$ such that
\begin{equation}\label{Prof-MainTh3- Eq7}
 \frac{2}{(1-\chi)\sqrt{\pi}}\int_{ \{ s\geq R\  \text{or}\ |z|\geq R\}}e^{-s}e^{-z^2}dzds  <\frac{\varepsilon}{3}  \qquad \text{and} \qquad \sum_{i\geq k}\frac{2}{(1-\chi)2^{i}} < \frac{\varepsilon}{3}.
\end{equation}
Next, choose $N\gg 1$ such that
\begin{equation}\label{Prof-MainTh3- Eq8}
2^{k+2R^2}\|u_{n}-u\|_{\ast}<\frac{\varepsilon}{3} \quad \forall\ n\geq N.
\end{equation}
It follows from inequalities  \eqref{Prof-MainTh3- Eq6}, \eqref{Prof-MainTh3- Eq7} and \eqref{Prof-MainTh3- Eq8} that for every $n\geq N $, we have
\begin{eqnarray}\label{Prof-MainTh3- Eq9}
\|V(\cdot;u_{n})-V(\cdot;u)\|_{\ast} & \leq & \sum_{i\geq k}\frac{1}{2^{i}}\|V(\cdot;u_{n})-V(\cdot;u)\|_{L^{\infty}([-i\ ,\ i])}+\|V(\cdot;u_{n})-V(\cdot;u)\|_{L^{\infty}([-k\ ,\ k])}\nonumber\\
& \leq &\sum_{i\geq k}\frac{2}{(1-\chi)2^{i}} + \|V(\cdot;u_{n})-V(\cdot;u)\|_{L^{\infty}([-k\ ,\ k])} < \varepsilon.
\end{eqnarray}
Thus, the claim follows.

\smallskip

\noindent {\bf Claim 2.} $\|V'(\cdot;u_{n})-V'(\cdot;u)\|_{\ast}\to 0$ as $n\to \infty$. Indeed, it follows from \eqref{Inverse of u} that
\begin{align}\label{Prof-MainTh3- Eq10}
V'(x;w)&=\int_{0}^{\infty}\int_{\R}\frac{(z-x)e^{-s}}{2s\sqrt{4\pi s}}e^{-\frac{|x-z|^{2}}{4s}}w(z)dzds\nonumber\\
&= \frac{-1}{\sqrt{\pi}}\int_{0}^{\infty}\int_{\R}ye^{-s}e^{-y^2}w(x-2\sqrt{s}y)dzds\ \ \forall \ x\in\R \ , w\in C^{b}_{\rm unif}(\R).
\end{align}
Since
$$
\lim_{R\to \infty}\int_{\{s\geq R\ \text{or}\ |y|\geq R\}}|y|e^{-s}e^{-y^2}dzds =0,
$$ same arguments as in the proof of Claim 1 yield Claim 2.

Now, since $V''(\cdot;u_{n})-V''(\cdot;u)=(V(\cdot;u_{n})-V(\cdot;u))-(u_{n}-u)$, it follows from Claim 1 that
\begin{equation}\label{Prof-MainTh3- Eq11}
\|V''(\cdot;u_{n})-V''(\cdot;u)\|_{\ast}\to 0 \quad \text{as}\quad  n\to \infty.
\end{equation}
Combining inequality \eqref{Prof-MainTh3- Eq2}, Claim 1, Claim 2, \eqref{Prof-MainTh3- Eq11}, Theorem 3.15  of \cite{Friedman}, and the Arzela-Ascoli's Theorem, there is a subsequence $\{U(\cdot;u_{n_{2}})\}_{\geq 1}$ of  $\{U(\cdot;u_{n_{1}})\}_{n\geq 1}$ and a function $U\in C^{2}(\R)$ such that $\{U(\cdot;u_{n_{2}})\}_{\geq 1}$ converges to U in  $C^{2}_{loc}(\R^N)$  and $U$ satisfies
\begin{equation}\label{Prof-MainTh3- Eq12}
0=U_{xx}+(c_{\mu}-\chi V'(x;u))U_{x}+(1-\chi V(x; u)-(1-\chi)U)U.
\end{equation} Hence $U\in \mathcal{E}_{\mu}$ and
\begin{equation}\label{Prof-MainTh3- Eq13}
\|U(\cdot;u_{n_{2}})-U\|_{\ast}\to 0 \quad \text{as}\quad n\to \infty.
\end{equation}
But
\begin{equation}\label{Prof-MainTh3- Eq14}
0=U_{xx}(x;u)+(c_{\mu}-\chi V'(x;u))U_{x}(x;u)+(1-\chi V(x; u)-(1-\chi)U(x;u))U(x;u) \quad \forall\,\,x\in\R.
\end{equation}
By Lemma \ref{aux-lm}, $U(\cdot)=U(\cdot;u)$. By \eqref{Prof-MainTh3- Eq3},
$$
\|U(\cdot)-U(\cdot;u)\|\ge \delta,
$$
which is a contradiction. Hence  the mapping $\mathcal{E}_\mu\ni u\mapsto U(\cdot;u)$ is continuous.

Now by Schauder's Fixed Point Theorem, there is $U\in\mathcal{E}_\mu$ such that $U(\cdot;U)=U(\cdot)$. Then
$(U(x),V(x;U))$ is a stationary solution of \eqref{stationary-eq} with $c=c_\mu$. It is clear that
$$
\lim_{x\to\infty}\frac{U(x)}{e^{-\mu x}}=1.
$$

We claim that if $\chi<\frac{1}{2}$, then
$$
\lim_{x\to -\infty}U(x)=1.
$$
For otherwise, we may assume that there is $x_n\to -\infty$ such that $U(x_n)\to a\not =1$ as $n\to\infty$. Define $U_{n}(x)=U(x+x_{n})$ for every $x\in\R$ and $n\geq 1$. By observing that $U_{n}=U(\cdot;U_{n})$ for every n, hence it follows from the step 1, that there is a subsequence $\{U_{n'}\}_{n\geq 1}$ of $\{U_{n}\}_{n\geq}$ and a function $U^*\in\mathcal{E}_{\mu}$ such that $\|U_{n'}-U^*\|_{\ast}\to 0$ as $n\to \infty$. Next, it follows from step 2 that $(U^*,V(\cdot;U^*))$ is also a stationary solution of \eqref{stationary-eq}.

\smallskip

\noindent {\bf Claim 3.} $\inf_{x\in\R}U^{*}(x)>0$. Indeed, let $0< \delta\ll 1$ be fixed. For  every $x\in\R$, there $N_{x}\gg 1$ such that $x+x_{n'}< x_{\delta}$ for all $n\geq N_{x}$. Hence, It follows from Remark \ref{Remark-lower-bound-for -solution} that  $$0< U_{\mu}^{-}(x_{\delta})\leq U(x+x_{n'}) \ \forall\ n\geq N_{n}.$$
Letting $n$ goes to infinity in the last inequality, we obtain that $U_{\mu}^{-}(x_{\delta})\leq U^{*}(x)$ for every $x\in\R$. The claim thus follows.

Since $\chi<\frac{1}{2}$, it follows from Theorem 1.8 of \cite{SaSh} that $U^*(x)=V(x;U^*)=1 $ for every $x\in \R$. In particular, $a=U^*(0)=1$, which is a contradiction.
This implies that $U^*(0)=1=a$, which is a contradiction. Hence $\lim_{x\to -\infty} U(x)=1$.
\end{proof}
As a direct consequence of Theorem \ref{existence-tv-thm} we present the proof of Theorem A.

\medskip

\begin{proof}[Proof of Theorem A]  Let $0<\chi<\frac{1}{2}$ be fixed.
According to Theorem \ref{existence-tv-thm}, it is enough to show  that for every $c\geq c^*(\chi)$ there is $0<\mu(c)<1$ with $c_{\mu(c)}=c$ and $\mu(c)$ satisfies \eqref{Eq01_Th1}.
 To this end, recall that there is a unique $\mu^*(\chi)\in (0, 1)$ such that
 $$
 \frac{\mu^*(\chi)\Big(\mu^* (\chi)+\sqrt{1-(\mu^{*}(\chi))^2}\Big)}{1-(\mu^{*}(\chi))^2}=\frac{1-\chi}{\chi}.
 $$
 Recall also that $c^*(\chi):=c_{\mu^*(\chi)}=\mu^*(\chi)+\frac{1}{\mu^*(\chi)}$. Since the function $(0, 1)\ni\mu\mapsto c_{\mu}=\mu+\frac{1}{\mu}$ is continuous, decreasing with $\lim_{\mu\to 0^+}c_{\mu}=\infty$, then for every $c\geq c^{*}(\chi)$, there is a unique $\mu(c)\in(0, \mu^*(\chi)]$ such that $c=c_{\mu(c)}$. Furthermore, we have that
 $$\frac{\mu(c)\Big(\mu(c)+\sqrt{1-\mu(c)^2}\Big)}{1-\mu(c)^{2}}\leq \frac{\mu^*(\chi)\Big(\mu^*(\chi)+\sqrt{1-(\mu^{*}(\chi))^2}\Big)}{1-(\mu^{2}(\chi))^2}=\frac{1-\chi}{\chi}.
  $$
  Hence, applying Theorem \ref{existence-tv-thm} the result follows.
\end{proof}

\section{Spatial spreading speeds}

In this section, we study the spreading properties of solutions of \eqref{IntroEq0-2} with nonnegative initial functions $u_0$ which have nonempty and compact supports, and prove Theorem B. Throughout this section, we assume that $0<\chi<1$, unless specified otherwise.

  One important ingredient in the proof of
Theorem B is to prove that for
any $u_{0}\in C^{+}_{c}(\R)$, there is $M>0$
such that
$$
0\le  u(x,t;u_0)\le  M e^{-\mu^* (|x|-c_{\mu^*} t)},
$$
where $(u(x,t;u_0),v(x,t;u_0))$ is the solution of \eqref{IntroEq0-2}  with $u(x,0;u_0)=u_0(x)$, $\mu^*$ is as in \eqref{mu-star-new-eq}, and   $c_{\mu^*}=\mu^*+\frac{1}{\mu^*}$.
 To this end, we first prove some lemmas.

Fix $u_{0}\in C_{c}^{+}(\R)$ and $0< \chi <1$.
{  Let $R\gg 1$ such that ${\rm supp}(u_{0})\subset[-R, R]$.  Recall that
for every $0<\mu<1$, $\varphi_{\mu}(x)=e^{-\mu x}$.
For every $T>0$ and $\mu\in(0, 1)$,  we define
$$
\mathcal{E}^T_{\mu}(u_{0}):=\{ u\in C_{\rm unif}^{b}(\R\times[0,T])\, | \, 0\leq u\leq \bar{U}_{\mu}\ \text{and}\ u(\cdot,0)=u_0\},
$$
$$
\bar{U}_{\mu}(x,t):=Me^{\mu c_{\mu}t}\varphi_{\mu}(|x|)=Me^{\mu c_{\mu}t}\min\{\varphi_{\mu}(-x),\varphi_{\mu}(x)\}, \, \forall \ x\in\R,\ t\ge 0,
$$
and $$M:= \max\{\frac{e^{R}}{1-\chi},e^{R}\|u_{0}\|_{\infty}\}. $$
Observe that $M$ is independent of $T$,  $\bar{U}_{\mu}(x,0)\geq u_{0}(x)$ for every $x\in \R$, and
\begin{equation}\label{Spreading-speed-eq02}
\partial_{t}\bar{U}_\mu(x,t)-\partial_{xx}\bar{U}_\mu(x,t)-\bar{U}_\mu(x,t)=0, \quad \forall\ x\neq 0, \ t\ge 0.
\end{equation}

For given $u\in \mathcal{E}_{\mu}^{T}(u_0)$, let $V(x,t;u)$ be the solution of the second equation in \eqref{IntroEq0-2}. Note that
\begin{equation}\label{Inverse of u-new}
V(x,t;u)=\int_{0}^{\infty}\int_{\R}\frac{e^{-s}}{\sqrt{4\pi s}}e^{-\frac{|x-z|^{2}}{4s}}u(z,t)dzds,
\end{equation}
In what follows, some of the arguments are similar to those of the previous sections. Hence, some details might be omitted. The next Lemma is an equivalent of Lemmas \ref{Mainlem2} and \ref{Mainlem3}, whence it provides  pointwise estimates on $V(\cdot,t;u)$ and $|\partial_{x} V(\cdot,t;u)|$ for every $u\in\mathcal{E}_{\mu}^{T}(u_0)$.

\begin{lem}\label{Spread-speed-v-v'-est}
 For every { $0<\mu<1$ and} for every $u\in \mathcal{E}_{\mu}^{T}(u_{0})$ we have that
\begin{equation}\label{general-spraeding-eq2-new}
V(\cdot,\cdot;u)\leq  { \frac{1}{1-\mu^2}}\bar{U}_\mu(\cdot,\cdot)
\end{equation}
and
\begin{equation}\label{general-spraeding-eq3-new}
|\partial_{x_{i}}V(\cdot,\cdot;u)|\leq  { \frac{\mu+\sqrt{1-\mu^2}}{1-\mu^{2}}}\bar{U}_{\mu}(\cdot,\cdot).
\end{equation}
\end{lem}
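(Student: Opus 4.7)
The plan is to reduce both estimates to the corresponding one-sided bounds already proved in Lemmas \ref{Mainlem2} and \ref{Mainlem3}, by exploiting the symmetry built into $\bar{U}_\mu(x,t)=Me^{\mu c_\mu t}\min\{\varphi_\mu(-x),\varphi_\mu(x)\}$. Fix $t\in[0,T]$ and $u\in\mathcal{E}_\mu^T(u_0)$, so that pointwise in $z$,
$$
0\le u(z,t)\le Me^{\mu c_\mu t}\,e^{-\mu|z|}\le Me^{\mu c_\mu t}\min\{e^{-\mu z},\, e^{\mu z}\}.
$$
The two one-sided bounds $e^{-\mu|z|}\le e^{-\mu z}$ and $e^{-\mu|z|}\le e^{\mu z}$ are the key mechanism by which $|x|$ will eventually appear on the right-hand side.

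For the estimate on $V(\cdot,\cdot;u)$, I would insert the bound $u(z,t)\le Me^{\mu c_\mu t}e^{-\mu z}$ into the representation \eqref{Inverse of u-new} and apply verbatim the computation carried out in \eqref{Eq011} to obtain
$$
V(x,t;u)\le \frac{Me^{\mu c_\mu t}}{1-\mu^2}\,e^{-\mu x}.
$$
Symmetrically, using $u(z,t)\le Me^{\mu c_\mu t}e^{\mu z}$ and performing the change of variables $z\mapsto -z$ (which leaves the Gaussian kernel invariant), the same integral evaluates to $\frac{Me^{\mu c_\mu t}}{1-\mu^2}e^{\mu x}$. Taking the minimum of the two upper bounds yields \eqref{general-spraeding-eq2-new}.

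For the gradient estimate I would start from the representation of $\partial_x V(x,t;u)$ as in \eqref{Eq_Mainlem0001}, take absolute values to place $|x-z|$ inside the integrand, and then apply the same two-sided comparison $e^{-\mu|z|}\le \min\{e^{-\mu z},e^{\mu z}\}$. Inserting the first bound and reusing the computation from \eqref{Eq_Mainlem0002} gives
$$
|\partial_x V(x,t;u)|\le \frac{\mu+\sqrt{1-\mu^2}}{1-\mu^2}Me^{\mu c_\mu t}e^{-\mu x},
$$
and inserting the second bound, together with the substitution $z\mapsto -z$, yields the analogue with $e^{\mu x}$ in place of $e^{-\mu x}$. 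Taking the minimum produces \eqref{general-spraeding-eq3-new}.

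The only delicate point is keeping the two cases $x\ge0$ and $x\le0$ conceptually separate so that the minimum operation is justified uniformly. Since this amounts to checking that each individual integral inequality already used in the proofs of Lemmas \ref{Mainlem2} and \ref{Mainlem3} carries over verbatim (the Gaussian kernel is invariant under $z\mapsto -z$, and the time-dependent prefactor $Me^{\mu c_\mu t}$ simply passes through all integrations), no new analytic obstacle arises; the argument is essentially a time-parametrized repetition of the stationary case from Section~2.
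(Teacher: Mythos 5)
Your proposal is correct and follows essentially the same route as the paper's proof: both split the bound $\bar U_\mu(z,t)\le Me^{\mu c_\mu t}e^{-\mu z}$ and $\bar U_\mu(z,t)\le Me^{\mu c_\mu t}e^{\mu z}$ into two one-sided estimates, run the same Gaussian computations from the proofs of Lemmas \ref{Mainlem2} and \ref{Mainlem3}, and then take the pointwise minimum of the resulting bounds $\frac{Me^{\mu c_\mu t}}{1-\mu^2}e^{\mp\mu x}$ (resp. $\frac{(\mu+\sqrt{1-\mu^2})Me^{\mu c_\mu t}}{1-\mu^2}e^{\mp\mu x}$) to recover the factor $\bar U_\mu(x,t)$. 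The only cosmetic difference is that you handle the second one-sided bound via the change of variables $z\mapsto -z$, while the paper simply recompletes the square with $(y-\mu\sqrt s)^2$ in place of $(y+\mu\sqrt s)^2$; these are the same calculation.
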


\begin{proof} Using the fact that  $\bar{U}_{\mu}(z,t)\leq e^{-\mu (z-c_{\mu}t)}$ for every $z\in\R, t\geq 0$, we obtain that
\begin{eqnarray}
\label{general-spraeding-eq5-new}
\int_{0}^{\infty}\int_{\R}\frac{e^{-s}}{(4\pi s)^{\frac{1}{2}}}e^{-\frac{|x-z|^2}{4s}}\bar{U}_{\mu}(z,t)dzds &\le &\int_{0}^{\infty}\int_{\R}\frac{e^{-s}}{(4\pi s)^{\frac{1}{2}}}e^{-\frac{|x-z|^2}{4s}} Me^{\mu c_{\mu}t}e^{-\mu z}dzds
\nonumber\\
&= &\frac{Me^{\mu c_{\mu}t}}{\pi^{\frac{1}{2}}}\int_{0}^{\infty}\int_{\R}e^{-s}e^{-|y|^2}e^{-\mu(x+2\sqrt{s}y)}dyds\nonumber\\
&=& \frac{Me^{-\mu x+\mu c_{\mu}t}}{\pi^{\frac{1}{2}}}\int_{0}^{\infty}e^{-s}\Big[\int_{\R}e^{-(y+\mu \sqrt{s})^2+\mu^{2}s}dy\Big]ds\nonumber\\
&= & Me^{-\mu x+\mu c_{\mu}t}\int_{0}^{\infty}e^{-(1-\mu^{2})s}ds\nonumber\\
&=& \frac{Me^{-\mu x+\mu c_{\mu}t}}{1-\mu^2}\quad \forall\,\, x\in\R.
\end{eqnarray}
Similarly,  Using the fact that  $\bar{U}_{\mu}(z,t)\leq e^{\mu(z+c_{\mu}t)}$ for every $z\in\R, t\geq 0$, we obtain that
\begin{eqnarray}
\label{general-spraeding-eq5-new-1}
\int_{0}^{\infty}\int_{\R}\frac{e^{-s}}{(4\pi s)^{\frac{1}{2}}}e^{-\frac{|x-z|^2}{4s}}\bar{U}_{\mu}(z,t)dzds &\le &\int_{0}^{\infty}\int_{\R}\frac{e^{-s}}{(4\pi s)^{\frac{1}{2}}}e^{-\frac{|x-z|^2}{4s}} Me^{\mu c_{\mu}t}e^{\mu z}dzds
\nonumber\\
&= &\frac{Me^{\mu c_{\mu}t}}{\pi^{\frac{1}{2}}}\int_{0}^{\infty}\int_{\R}e^{-s}e^{-|y|^2}e^{\mu(x+2\sqrt{s}y)}dyds\nonumber\\
&=& \frac{Me^{\mu x+\mu c_{\mu}t}}{\pi^{\frac{1}{2}}}\int_{0}^{\infty}e^{-s}\Big[\int_{\R}e^{-(y-\mu \sqrt{s})^2+\mu^{2}s}dy\Big]ds\nonumber\\
&=& \frac{Me^{\mu x+\mu c_{\mu}t}}{1-\mu^2}\quad \forall\,\, x\in\R.
\end{eqnarray}
Thus,
\begin{eqnarray}\label{general-spraeding-eq5-new}
\int_{0}^{\infty}\int_{\R}\frac{e^{-s}}{(4\pi s)^{\frac{1}{2}}}e^{-\frac{|x-z|^2}{4s}}\bar{U}_{\mu}(z,t)dzds&\le & \frac{1}{1-\mu^2}\bar{U}_{\mu}(x,t).
\end{eqnarray}
Since $V(\cdot,\cdot;u)\leq V(\cdot,\cdot;\bar{U}_{\mu})$, hence inequality \eqref{general-spraeding-eq2-new} follows from \eqref{general-spraeding-eq5-new}.

\smallskip

For every $x\in\R$ and $t>0$, we have that
\begin{align}\label{general-spraeding-eq6-new}
|\partial_{x}V(x,t;u)|&=\Big|\int_{0}^{\infty}\int_{\R}\frac{(y-x)}{2s(4\pi s)^{\frac{1}{2}}}e^{-s}e^{-\frac{|x-y|^2}{4s}}u(y,t)dyds\Big|\nonumber\\
&\le \int_{0}^{\infty}\int_{\R}\frac{|y-x|}{2s(4\pi s)^{\frac{1}{2}}}e^{-s}e^{-\frac{|x-y|^2}{4s}}u(y,t)dyds\nonumber\\
&\le \int_{0}^{\infty}\int_{\R}\frac{|y-x|}{2s(4\pi s)^{\frac{1}{2}}}e^{-s}e^{-\frac{|x-y|^2}{4s}}\bar U_\mu (y,t)dyds\nonumber\\
&\le \int_{0}^{\infty}\int_{\R}\frac{|y-x|}{2s(4\pi s)^{\frac{1}{2}}}e^{-s}e^{-\frac{|x-y|^2}{4s}}Me^{\mu c_{\mu}t}e^{-\mu y}dyds\nonumber\\
&=  \frac{Me^{-\mu x+\mu c_{\mu}t}}{\pi^{\frac{1}{2}}}\int_{0}^{\infty}\int_{\R}\frac{|y|}{\sqrt{s}}e^{-(1-\mu^2)s}e^{-(y+\mu\sqrt{s})^{2}}dyds  \nonumber\\
& \leq  \Big(\frac{\sqrt{\pi}}{\sqrt{1-\mu^2}}+\frac{\mu\sqrt{\pi}}{1-\mu^2}\Big)Me^{-\mu x+\mu c_{\mu}t}\quad {\rm by}\quad \eqref{Eq_Mainlem0002}).
\end{align}
Similarly, every $x\in\R$ and $t>0$, w have that
\begin{align}\label{general-spraeding-eq7-new}
|\partial_{x}V(x,t)|&=\Big|\int_{0}^{\infty}\int_{\R}\frac{(y-x)}{2s(4\pi s)^{\frac{1}{2}}}e^{-s}e^{-\frac{|x-y|^2}{4s}}u(y,t)dyds\Big|\nonumber\\
&\le \int_{0}^{\infty}\int_{\R}\frac{|y-x|}{2s(4\pi s)^{\frac{1}{2}}}e^{-s}e^{-\frac{|x-y|^2}{4s}}\bar U_\mu (y,t)dyds\nonumber\\
&\le \int_{0}^{\infty}\int_{\R}\frac{|y-x|}{2s(4\pi s)^{\frac{1}{2}}}e^{-s}e^{-\frac{|x-y|^2}{4s}}Me^{\mu c_{\mu}t}e^{\mu y}dyds\nonumber\\
&=  \frac{Me^{\mu x+\mu c_{\mu}t}}{\pi^{\frac{1}{2}}}\int_{0}^{\infty}\int_{\R}\frac{|y|}{\sqrt{s}}e^{-(1-\mu^2)s}e^{-(y-\mu\sqrt{s})^{2}}dyds  \nonumber\\
& \leq  \Big(\frac{\sqrt{\pi}}{\sqrt{1-\mu^2}}+\frac{\mu\sqrt{\pi}}{1-\mu^2}\Big)Me^{\mu x+\mu c_{\mu}t}\quad {\rm by}\quad \eqref{Eq_Mainlem0002}).
\end{align}
Combining \eqref{general-spraeding-eq6-new} and \eqref{general-spraeding-eq7-new}, we obtain \eqref{general-spraeding-eq3-new}. The lemma is thus proved.
\end{proof}

Now, for every $u\in\mathcal{E}^{T}_{\mu}(u_0),$\  let $\bar{U}(\cdot,\cdot;u)$ be the solution of the initial value problem
\begin{equation}\label{Spreading-speed-eq04}
\begin{cases}
\partial_{t} \bar{U}=\bar{\mathcal{L}}(\bar{U}), \qquad  x\in\R,\ t>0\\
\bar{U}(\cdot,0;u)=u_{0}
\end{cases}
\end{equation}
where $$\bar{\mathcal{L}}(\bar{U}):=\partial_{xx}\bar{U}-\chi \partial_{x}V(\cdot,\cdot;u)\partial_{x}\bar{U}(\cdot,\cdot)+(1-\chi V(\cdot,\cdot;u)+(1-\chi)\bar{U})\bar{U}. $$
For given $\tilde U_0\in\R$, let $\tilde U(t;\tilde U_0)$ be the solution of the initial value problem
\begin{equation}
\label{new-ode-eq}
\tilde U_t=\tilde U(1-(1-\chi)\tilde U)
\end{equation}
with $\tilde U(t;\tilde U_0)=\tilde U_0$.

The next Lemma shows that $
\bar{U}(\cdot,\cdot;u)\in\mathcal{E}_{\mu}^T(u_{0})$ for every
$u\in\mathcal{E}_{\mu}^T(u_{0})$  for $\mu=\mu^*$.

\begin{lem}\label{Spread-speed-sup-sol}
 Assume that $0<\chi<1$. Let $\mu^{*}\in (0, 1)$ satisfy \eqref{mu-star-new-eq}.
Then for every  $u\in\mathcal{E}_{\mu^{*}}^{T}(u_0)$ we have that
\begin{equation}\label{general-spraeding-eq8-new}
0\leq \bar{U}(\cdot,\cdot;u)\leq \bar{U}_{\mu^{*}}(\cdot,\cdot)
\end{equation}
\end{lem}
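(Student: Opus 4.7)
\emph{Plan.} The nonnegativity $\bar U(\cdot,\cdot;u)\ge 0$ will be immediate from the parabolic maximum principle, since the zero function is a subsolution of the semilinear equation \eqref{Spreading-speed-eq04} and $u_0\ge 0$. The substance of the lemma is the pointwise bound $\bar U(\cdot,\cdot;u)\le \bar U_{\mu^*}(\cdot,\cdot)$, and the principal obstacle is that $\bar U_{\mu^*}(x,t)=Me^{\mu^* c_{\mu^*}t}e^{-\mu^*|x|}$ fails to be $C^2$ at $x=0$, so one cannot feed $\bar U_{\mu^*}$ itself into a classical maximum-principle argument. My plan is to exploit the representation $\bar U_{\mu^*}=\min\{W^+,W^-\}$ with the smooth exponentials
\begin{equation*}
W^\pm(x,t):=M e^{\mu^* c_{\mu^*} t} e^{\mp \mu^* x},\qquad x\in\R,\ t\ge 0,
\end{equation*}
to verify that each $W^\pm$ is a classical super-solution of \eqref{Spreading-speed-eq04} on the whole strip $\R\times[0,T]$, and then take the pointwise minimum.

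\emph{Super-solution check.} For either sign, a direct computation using the identity $\mu^* c_{\mu^*}-(\mu^*)^2=1$ yields $\partial_t W^\pm-\partial_{xx}W^\pm=W^\pm$, so the super-solution inequality $\partial_t W^\pm-\bar{\mathcal{L}}(W^\pm)\ge 0$ reduces to
\begin{equation*}
\chi V(\cdot,\cdot;u)\ \mp\ \chi\mu^*\,\partial_x V(\cdot,\cdot;u)\ +\ (1-\chi)\,W^\pm\ \ge\ 0.
\end{equation*}
By Lemma \ref{Spread-speed-v-v'-est} together with $\bar U_{\mu^*}\le W^\pm$, one has $|\partial_x V(\cdot,\cdot;u)|\le \tfrac{\mu^*+\sqrt{1-(\mu^*)^2}}{1-(\mu^*)^2}\,W^\pm$, and $V(\cdot,\cdot;u)\ge 0$, so the left-hand side dominates
\begin{equation*}
W^\pm\Bigl((1-\chi)\ -\ \chi\mu^*\,\tfrac{\mu^*+\sqrt{1-(\mu^*)^2}}{1-(\mu^*)^2}\Bigr),
\end{equation*}
which is exactly zero by the defining relation \eqref{mu-star-new-eq} of $\mu^*$. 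This is the step where the critical exponent $\mu^*$ enters in an essential way, and it is the place I expect to have to write with the most care.

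\emph{Initial data and comparison.} The choice $M\ge e^R\|u_0\|_\infty$ together with $\mathrm{supp}(u_0)\subset[-R,R]$ ensures $W^\pm(\cdot,0)\ge u_0$; indeed, for $x\in[-R,R]$, $W^+(x,0)=Me^{-\mu^* x}\ge Me^{-\mu^* R}\ge e^{(1-\mu^*)R}\|u_0\|_\infty\ge u_0(x)$, while for $x>R$ one has $u_0(x)=0\le W^+(x,0)$, and the argument for $W^-$ is symmetric. Setting $z:=W^\pm-\bar U(\cdot,\cdot;u)$, the difference satisfies a linear parabolic inequality $\partial_t z\ge \partial_{xx} z-\chi\partial_x V\cdot\partial_x z+c(x,t)\,z$ with bounded zeroth-order coefficient $c(x,t)=1-\chi V-(1-\chi)(W^\pm+\bar U)$, so the standard parabolic comparison principle gives $z\ge 0$ on $\R\times[0,T]$. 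Combining both bounds delivers $\bar U(\cdot,\cdot;u)\le \min\{W^+,W^-\}=\bar U_{\mu^*}(\cdot,\cdot)$, as required. A small preliminary point is to note that existence and parabolic regularity of $\bar U(\cdot,\cdot;u)$ on $[0,T]$ for general $u\in\mathcal{E}_{\mu^*}^T(u_0)$ are guaranteed by Lemma \ref{Spread-speed-v-v'-est}, which makes the drift $\chi\partial_x V(\cdot,\cdot;u)$ and the potential $\chi V(\cdot,\cdot;u)$ bounded and H\"older regular in $x$, so standard semilinear parabolic theory applies.
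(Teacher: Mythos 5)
Your proof is correct in substance, but it takes a genuinely different route from the paper's.

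The paper works directly with the kinked barrier $\bar U_{\mu^*}(x,t)=Me^{\mu^* c_{\mu^*}t}\min\{e^{\mu^* x},e^{-\mu^* x}\}$: it verifies the super-solution inequality only for $x\neq 0$ (where the function is smooth), and then closes the argument at the singular point $x=0$ by a separate estimate, namely the ODE bound $\bar U(0,t;u)\leq\tilde U(t;\|u_0\|_\infty)\leq\max\{\tfrac{1}{1-\chi},\|u_0\|_\infty\}\leq\bar U_{\mu^*}(0,t)$; the comparison principle is then invoked on the half-lines $(0,\infty)$ and $(-\infty,0)$ with this boundary control at the origin. Your route instead lifts the kink by replacing $\bar U_{\mu^*}$ with the two smooth exponentials $W^\pm$, verifies the supersolution inequality on the whole line for each, applies comparison twice, and recovers $\bar U_{\mu^*}=\min\{W^+,W^-\}$ at the end. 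The algebraic heart of both proofs is the same cancellation coming from \eqref{mu-star-new-eq}, applied after using the pointwise gradient bound from Lemma \ref{Spread-speed-v-v'-est} and discarding the nonnegative $\chi V$ term. What your approach buys is avoiding the boundary condition at $x=0$ and the intermediate ODE barrier $\tilde U$; what it costs is that $W^\pm$ is unbounded on $\R$, so the linear comparison must be run in a Phragm\'en--Lindel\"of form rather than the bounded-coefficient form you assert. Concretely, your claim that the zeroth-order coefficient $c(x,t)=1-\chi V-(1-\chi)(W^\pm+\bar U)$ is bounded is not true: it is only bounded \emph{above} (by $1$), since $W^\pm$ blows up as $x\to\mp\infty$. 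The conclusion still holds, because $c$ bounded above together with $z=W^\pm-\bar U$ bounded below (which requires noting first, as the paper does, that $\bar U\leq\tilde U(t;\|u_0\|_\infty)$ is bounded on $[0,T]$) is enough for the comparison principle on $\R\times[0,T]$ after the usual exponential-in-time rescaling and a quadratic spatial barrier; but this should be said explicitly rather than claiming boundedness of $c$. With that one correction the argument is complete.
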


\begin{proof}
Since $u_{0}\geq 0$, comparison principle for parabolic equations
implies that $\bar{U}(\cdot,\cdot;u)\geq 0$. Observe that
$$
\partial_{t}\bar{U}(\cdot,\cdot;u)\leq  \partial_{xx}\bar{U}(\cdot,\cdot;u)-\chi\partial_{x} V(\cdot,\cdot;u)\partial_{x} \bar{U}(\cdot,\cdot;u) +(1-(1-\chi)\bar{U}(\cdot,\cdot;u))\bar{U}(\cdot,\cdot;u).
$$
Thus, comparison principle implies that  $\bar{U}(\cdot,t;u)\leq \tilde {U}(t;\|u_0\|_{\infty})$. Hence
\begin{equation}\label{general-spraeding-eq9-new}
\bar{U}(0,t;u)\leq \tilde U(t;\|u_0\|_{\infty})\leq \max\{\frac{1}{1-\chi},\|u_0\|_{\infty}\}\leq \bar{U}_{\mu^*}(0,t) \ \forall\ t\geq 0.
\end{equation}
If we restrict $\bar{U}_{\mu^*}$ on $\R^{N}\setminus\{0\}$, using \eqref{Spreading-speed-eq02} and \eqref{general-spraeding-eq3-new},  we obtain that
\begin{eqnarray}\label{general-spraeding-eq10-new}
\partial_{t}\bar{U}_{\mu^{*}}-\bar{\mathcal{L}}(\bar{U}_{\mu^{*}})& = &\mu\chi \partial_{x} V(\cdot,\cdot;u)\bar{U}_{\mu^{*}}+(\chi V(\cdot,\cdot;u)+(1-\chi)\bar{U}_{\mu^{*}})\bar{U}_{\mu^{*}}\nonumber\\
&\geq & -\mu\chi |\partial_{x} V(\cdot,\cdot;u)|\bar{U}_{\mu^{*}}  +(\chi V(\cdot,\cdot;u)+(1-\chi)\bar{U}_{\mu^{*}})\bar{U}_{\mu^{*}}\nonumber\\
&\geq & -\frac{\mu^{*}\chi(\mu^{*} +\sqrt{1-\mu^{*2}}}{1-\mu^{*2}})\Big[\bar{U}_{\mu^{*}}\Big]^{2}  +(1-\chi)\Big[\bar{U}_{\mu^{*}}\Big]^{2} \nonumber\\
& =& \chi \underbrace{\Big[\frac{\mu^{*}(\mu^{*} +\sqrt{1-\mu^{*2}}) }{1-\mu^{*2}}-\frac{1-\chi}{\chi} \Big]}_{=0}\Big[\bar{U}_{\mu^{*}}\Big]^{2}.
\end{eqnarray}
Combining inequalities \eqref{general-spraeding-eq9-new}, \eqref{general-spraeding-eq10-new} with the fact that $u_0\leq \bar U_{\mu^*}(\cdot,0)$, thus comparison principle for parabolic equations implies that $\bar{U}(\cdot,\cdot;u)\leq \bar U_{\mu^{*}}(\cdot,\cdot)$, which complete the proof of the lemma.
\end{proof}

%Therefore $\bar{U}(\cdot,\cdot;u)\in\mathcal{E}_{\mu}^{T} $ for every $u\in \mathcal{E}_{\mu}^{T}$.

\begin{lem}
\label{spraeding-speed-important-lm} Assume that $0<\chi<1$. For any
given $u_0\in C_c^+(\R)$ and  $T>0$,  $u(\cdot,\cdot;u_0)\in
\mathcal{E}^T_{\mu}(u_{0})$ with $\mu=\mu^{*}$.
\end{lem}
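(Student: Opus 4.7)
My plan is to prove this lemma by a Schauder fixed point argument on $\mathcal{E}^T_{\mu^*}(u_0)$, very much in the spirit of the proof of Theorem~\ref{existence-tv-thm}, but now on a finite time interval rather than in the traveling-wave profile setting. For each $u\in \mathcal{E}^T_{\mu^*}(u_0)$, let $V(x,t;u)$ be given by \eqref{Inverse of u-new} and let $\bar U(\cdot,\cdot;u)$ be the unique classical solution of the linear-coefficient parabolic problem \eqref{Spreading-speed-eq04} on $\R\times[0,T]$. Define
$$
\Psi : \mathcal{E}^T_{\mu^*}(u_0) \longrightarrow C^b_{\rm unif}(\R\times[0,T]),\qquad \Psi(u)(x,t) = \bar U(x,t;u).
$$
By Lemma~\ref{Spread-speed-sup-sol}, $0 \le \Psi(u) \le \bar U_{\mu^*}$, and clearly $\Psi(u)(\cdot,0)=u_0$, so $\Psi$ maps $\mathcal{E}^T_{\mu^*}(u_0)$ into itself.

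Next I would equip $C^b_{\rm unif}(\R\times[0,T])$ with a metric of locally uniform convergence, e.g.\ $\|w\|_{\ast,T}=\sum_{n\ge 1}2^{-n}\|w\|_{L^\infty([-n,n]\times[0,T])}$, exactly as in the proof of Theorem~\ref{existence-tv-thm}. With respect to this topology, $\mathcal{E}^T_{\mu^*}(u_0)$ is a bounded, closed and convex subset of $C^b_{\rm unif}(\R\times[0,T])$ (boundedness and closure follow because the pointwise bound $0\le u\le \bar U_{\mu^*}$ passes to the limit under locally uniform convergence, and $u(\cdot,0)=u_0$ does as well).

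The crux of the proof is to verify that $\Psi$ is continuous and compact on $\mathcal{E}^T_{\mu^*}(u_0)$, and this is where the main technical work lies. For compactness, I would use standard parabolic Schauder/semigroup estimates for the equation \eqref{Spreading-speed-eq04}: the coefficients $\chi\partial_x V(\cdot,\cdot;u)$ and $1-\chi V(\cdot,\cdot;u)-(1-\chi)\bar U$ are uniformly bounded on $\R\times[0,T]$ as $u$ ranges over $\mathcal{E}^T_{\mu^*}(u_0)$ (thanks to \eqref{Estimates on Inverse of V}, \eqref{general-spraeding-eq2-new} and \eqref{general-spraeding-eq3-new}), so an $X^\beta$-variation-of-constants argument, paralleling \eqref{variation -of-const}--\eqref{Eq_Conv02} in Lemma~\ref{MainLem02}, yields a uniform $C^{\nu,\nu/2}_{\rm loc}$ bound on $\bar U(\cdot,\cdot;u)$; Arzelà--Ascoli then gives compactness in $\|\cdot\|_{\ast,T}$. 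For continuity I would reproduce the argument of Step~2 in the proof of Theorem~\ref{existence-tv-thm}: if $u_n\to u$ locally uniformly on $\R\times[0,T]$, then $V(\cdot,\cdot;u_n)\to V(\cdot,\cdot;u)$ and $\partial_x V(\cdot,\cdot;u_n)\to \partial_x V(\cdot,\cdot;u)$ locally uniformly (by the dominated-convergence estimate used in Claims~1--2 of that proof, adapted with the exponential weights of Lemma~\ref{Spread-speed-v-v'-est}), and standard parabolic stability in $C^{2,1}_{\rm loc}$ passes this through to $\bar U(\cdot,\cdot;u_n)\to \bar U(\cdot,\cdot;u)$.

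Having established these properties, Schauder's fixed point theorem produces $u^\star\in\mathcal{E}^T_{\mu^*}(u_0)$ with $\Psi(u^\star)=u^\star$. Then $(u^\star,V(\cdot,\cdot;u^\star))$ solves \eqref{IntroEq0-2} classically on $\R\times[0,T]$ with $u^\star(\cdot,0)=u_0$. By the uniqueness statement in property~(i) of~\cite{SaSh} recalled in the introduction, $u^\star \equiv u(\cdot,\cdot;u_0)$ on $\R\times[0,T]$, and therefore $u(\cdot,\cdot;u_0)\in \mathcal{E}^T_{\mu^*}(u_0)$. The main obstacle I expect is the compactness/continuity of $\Psi$, because of the chemotactic drift term $\chi\,\partial_x V(\cdot,\cdot;u)\,\partial_x \bar U$: one must exploit both the uniform bound \eqref{general-spraeding-eq3-new} and the locally-uniform convergence of $\partial_x V(\cdot,\cdot;u_n)$, with tail control provided by the weighted bound $|\partial_x V(\cdot,t;u)|\le \frac{\mu^*+\sqrt{1-\mu^{*2}}}{1-\mu^{*2}}\bar U_{\mu^*}(\cdot,t)$, to push parabolic Schauder estimates through uniformly in $n$.
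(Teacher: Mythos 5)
Your proposal matches the paper's proof essentially line for line: the paper equips $C^b_{\rm unif}(\R\times[0,T])$ with the same weighted norm $\|\cdot\|_{\ast,T}$, uses Lemma~\ref{Spread-speed-sup-sol} for self-mapping, proves compactness via $X^\beta$-semigroup estimates and Arzel\`a--Ascoli and continuity via locally uniform convergence of $V$ and $\partial_x V$, applies Schauder to obtain a fixed point, and then invokes uniqueness of classical solutions from \cite{SaSh} to identify that fixed point with $u(\cdot,\cdot;u_0)$. The only small detail you leave implicit (and the paper handles explicitly as Claim~2 of Step~1) is verifying that the locally uniform limit still attains the initial datum $u_0$ as $t\to 0^+$, which follows from the uniform-in-$n$ estimate near $t=0$.
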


\begin{proof} In this proof, we put $\mu=\mu^*$.
Consider the normed linear space  $\mathcal{E}^{T}=C^{b}_{\rm unif}(\R\times[0,T])$ endowed with the norm
$$\|u\|_{\ast,T}=\sum_{n=1}^{\infty}\frac{1}{2^n}\|u\|_{L^{\infty}([-n,\ n]\times[0, T])}. $$
For every $u\in\mathcal{E}_{\mu}^{T}(u_{0})$ we have that $\|u\|_{\ast,T}\leq M e^{\mu c_{u}T}. $
Hence $\mathcal{E}_{\mu}^{T}(u_{0})$ is a bounded convex subset of $\mathcal{E}^{T}$. Since the convergence in $\mathcal{E}^{T}$ implies the pointwise convergence, then $\mathcal{E}_{\mu}^{T}(u_{0})$ is a closed, bounded, and convex subset of $\mathcal{E}^T$. Furthermore, a sequence of functions in $\mathcal{E}^T_{\mu}(u_{0})$ converges with respect to norm $\|\cdot\|_{\ast,T}$ if and only if it  converges locally uniformly on $\R\times[0, T]$.

By Lemma \ref{Spread-speed-sup-sol}, for any $u\in  \mathcal{E}_{\mu}^T(u_{0})$, $\bar{U}(\cdot,\cdot;u)\in\mathcal{E}_{\mu}^T(u_{0})$.
  We prove the lemma by showing that $u(\cdot,\cdot;u_0)$ is a fixed point of  the mapping
$ \mathcal{E}_{\mu}^T(u_{0})\ni u\mapsto \bar{U}(\cdot,\cdot;u)\in\mathcal{E}_{\mu}^T(u_{0})$ and divide the proof into two steps.

\smallskip

\noindent {\bf Step 1.} In this step, we prove that the mapping $\mathcal{E}^T_\mu(u_0)\ni u\mapsto \bar{U}(\cdot,\cdot;u)\in \mathcal{E}^T_\mu(u_0)$ is compact.

Indeed, let $\{u_{n}\}_{n\geq 1}\subset \mathcal{E}_{\mu}^{T}(u_0)$ be given. For every $n\geq 1$, $\bar{U}(\cdot,\cdot;u_n)$ satisfied
$$
\begin{cases}
\partial_{t}\bar{U}(\cdot,\cdot,u_{n})=\partial_{xx}\bar{U}(\cdot,\cdot,u_{n})-\chi \partial_{x}V(\cdot,\cdot;u_{n})\partial_{x}\bar{U}(\cdot,\cdot;u_{n})+(1-\chi V(\cdot,\cdot;u_{n})-(1-\chi)\bar{U}(\cdot,\cdot;u_n))\bar{U}(\cdot,\cdot;u_{n})\\
\bar{U}(\cdot,0;u_n)=u_{0}
\end{cases}
$$

Taking $\{T(t)\}_{t\geq 0}$ to be the analytic semigroup generated by $A:=(\Delta-I)$ on $C^{b}_{\rm unif}(\R)$, the variation of constant formula and similar arguments to the one used to establish \eqref{variation -of-const} yield that for every $t\geq 0$,
\begin{eqnarray}\label{Spread-speed-eq07}
\bar{U}(\cdot,t;u_n)%&=& T(t)u_{0} -\chi\int_{0}^{t}T(t-s) \partial_{x}V(\cdot,s;u_{n})\partial_{x}\bar{U}(\cdot,s;u_{n})ds\nonumber\\
%& &+\int_{0}^{t}T(t-s)((2-\chi V(\cdot,s;u_{n})-(1-\chi)\bar{U}(\cdot,s;u_n))\bar{U}(\cdot,s;u_{n}))ds\nonumber\\
& =& T(t)u_{0} -\chi\underbrace{\int_{0}^{t}T(t-s)\partial_{x}\Big( \partial_{x}V(\cdot,s;u_{n})\bar{U}(\cdot,s;u_{n})\Big)ds}_{I^{n}_{1}(t)}\nonumber\\
& &+\underbrace{\int_{0}^{t}T(t-s)((2-\chi u_{n}(\cdot,s)-(1-\chi)\bar{U}(\cdot,s;u_n))\bar{U}(\cdot,s;u_{n}))ds}_{I^{n}_{2}(t)}\nonumber\\
\end{eqnarray}
%where $$F_{n}(\bar{U}(\cdot,t;u_{n}))=-\chi \partial_{x}V(\cdot,t;u_{n})\partial_{x}\bar{U}(\cdot,t;u_{n})+(2-\chi V(\cdot,t;u_{n})-(1-\chi)\bar{U}(\cdot,t;u_n))\bar{U}(\cdot,t;u_{n}) $$

 \noindent{\bf Claim 1.} For every $0\le \beta<\frac{1}{2}$ and every $0<K<T$, the functions $[K,\ T]\ni t\mapsto \bar{U}(\cdot,t;u_{n})\in X^{\beta}$ are uniformly bounded and  equicontinuous.

Indeed, let $0\leq \beta< \frac{1}{2}$ and  $K\leq t\leq t+h\leq T $ be fixed.  Then
\begin{eqnarray}\label{Spread-speed-eq08}
\|\bar{U}(\cdot,t;u_{n})-T(t)u_{0}\|_{X^{\beta}}&\leq & \chi\int_{0}^{t}\|T(t-s)\partial_{x}(\partial_{x}V(\cdot,s;u_{n})\bar{U}(\cdot,s;u_{n})) \|_{X^{\beta}}ds\nonumber\\
& & + \int_{0}^{t}\| T(t-s)((2-\chi u_{n}-(1-\chi)\bar{U}(\cdot,s;u_{n}))\bar{U}(\cdot,s;u_{n}))\|_{X^{\beta}}ds\nonumber\\
&\leq & \chi C_{\beta}\int_{0}^{t}(t-s)^{-\beta-\frac{1}{2}}e^{-(t-s)}\|u_{n}\|_{\infty}\|\bar{U}(\cdot,s;u_{n})) \|_{\infty}ds\nonumber\\
& & + C_{\beta}\int_{0}^{t}(t-s)^{-\beta}e^{-(t-s)}\|(2-\chi u_{n}-(1-\chi)\bar{U}(\cdot,s;u_{n}))\bar{U}(\cdot,s;u_{n})\|_{\infty}ds\nonumber\\
&\leq & C_{\beta}M^{2}(3+2\chi)e^{2\mu c_{\mu}T}\Big[\int_{0}^{t}(t-s)^{-\beta-\frac{1}{2}}e^{-(t-s)}ds+\int_{0}^{t}(t-s)^{-\beta}e^{-(t-s)}ds\Big]\nonumber\\
&\leq & C_{\beta}M^{2}(3+2\chi)e^{2\mu c_{\mu}T}\Big(\frac{t^{\frac{1}{2}-\beta}}{\frac{1}{2}-\beta}+\frac{t^{1-\beta}}{1-\beta}\Big).
\end{eqnarray}
This combining with the fact that $\|T(t)u_{0}\|_{X^{\beta}}\leq C_{\beta}t^{-\beta}e^{-t}\|u_{0}\|_{\infty}$ yield  that
\begin{equation}
\sup_{K\leq t\leq T}\|\bar{U}(\cdot,t;u_{n})\|_{X^{\beta}}\leq \bar{M}_{K},
\end{equation}
where
\begin{equation*}
\bar{M}_{K}=C_{\beta}K^{-\beta}e^{-K}+C_{\beta}M^{2}(3+2\chi)e^{2\mu c_{\mu}T}\Big(\frac{T^{\frac{1}{2}-\beta}}{\frac{1}{2}-\beta}+\frac{T^{1-\beta}}{1-\beta}\Big).
\end{equation*}
On the other hand, we have that
\begin{equation}\label{Spread-speed-eq009}
\|T(t+h)u_0-T(t)u_0\|_{X^{\beta}}\leq C_{\beta} h^{\beta}t^{-\beta}e^{-t}\|u_0\|_{\infty}\leq C_{\beta} h^{\beta}K^{-\beta}e^{-K}\|u_0\|_{\infty},
\end{equation}
and
\begin{eqnarray}\label{Spread-speed-eq09}
\|I^{n}_{1}(t+h)-I^{n}_{1}(t)\|_{X^{\beta}}& \leq & \int_{0}^{t}\|(T(h)-I)T(t-s)\partial_{x}(\partial_{x}V(\cdot,s;u_{n})\bar{U}(\cdot,s;u_{n}))\|_{\beta}ds\nonumber\\
& & + \int_{t}^{t+h}\|T(t+h-s)\partial_{x}(\partial_{x}V(\cdot,s;u_{n})\bar{U}(\cdot,s;u_{n}))\|_{\beta}ds\nonumber\\
%& \leq & C_{\beta}h^{\beta}\int_{0}^{t}(t-s)^{-\beta-\frac{1}{2}}e^{-(t-s)}\|\partial_{x}V(\cdot,s;u_{n})\bar{U}(\cdot,s;u_{n})\|_{\infty}ds\nonumber\\
%& & + C_{\beta}\int_{t}^{t+h}(t+h-s)^{-\frac{1}{2}-\beta}e^{-(t+h-s)}\|\partial_{x}V(\cdot,s,u_{n})\|_{\infty}\|\bar{U}(\cdot,s;u_{n})\|_{\infty} \nonumber\\
%& \leq & C_{\beta}h^{\beta}\int_{0}^{t}(t-s)^{-\beta-\frac{1}{2}}e^{-(t-s)}\|u_{n}(\cdot,s)\|_{\infty}\|\bar{U}(\cdot,s;u_{n})\|_{\infty}ds\nonumber\\
%& & + C\int_{t}^{t+h}(t+h-s)^{-\frac{1}{2}-\beta}e^{-(t+h-s)}\|u_{n}(\cdot,s)\|_{\infty}\|\bar{U}(\cdot,s;u_{n})\|_{\infty} \nonumber\\ \nonumber\\
& \leq & C_{\beta}h^{\beta}\int_{0}^{t}(t-s)^{-\beta-\frac{1}{2}}e^{-(t-s)}\|\bar{U}_{\mu}(\cdot,s)\|_{\infty}^2ds\nonumber\\
& & + C_{\beta}\int_{t}^{t+h}(t+h-s)^{-\frac{1}{2}-\beta}e^{-(t+h-s)}\|\bar{U}_{\mu}(\cdot,s)\|_{\infty}^2 \nonumber\\\nonumber\\
%& \leq & M^{2}C_{\beta}h^{\beta}\int_{0}^{t}(t-s)^{-\beta-\frac{1}{2}}e^{-(t-s)}e^{2\mu c_{\mu}s}ds\nonumber\\
%& & +M^{2}C\int_{t}^{t+h}(t+h-s)^{-\frac{1}{2}-\beta}e^{-(t-s)}e^{2\mu c_{\mu}s}ds \nonumber\\
& \leq & M^{2}e^{2\mu c_{\mu}T}(C_{\beta}\Gamma(\frac{1}{2}-\beta)+ \frac{C}{\frac{1}{2}-\beta})(h^{\beta}+h^{\frac{1}{2}-\beta}).\nonumber\\
\end{eqnarray}

Similarly, we have
\begin{eqnarray}\label{Spread-speed-eq10}
\|I^{n}_{2}(t+h)-I^{n}_{2}(t)\|_{X^{\beta}}&\leq & \int_{0}^{t}\|(T(h)-I)T(t-s)(2-\chi u_{n}(\cdot,s)-(1-\chi)\bar{U}(\cdot,s;u_{n}))\bar{U}(\cdot,s,u_{n})\|_{X^{\beta}}ds\nonumber\\
& & + \int_{t}^{t+h}\|T(t+h-s)(2-\chi u_{n}(\cdot,s)-(1-\chi)\bar{U}(\cdot,s;u_{n}))\bar{U}(\cdot,s;u_{n})\|_{X^{\beta}}ds\nonumber\\
%&\leq & C_{\beta}h^{\beta}\int_{0}^{t}(t-s)^{-\beta}e^{-(t-s)}\|(2-\chi u_{n}(\cdot,s)-(1-\chi)\bar{U}(\cdot,s;u_{n}))\bar{U}(\cdot,s,u_{n})\|_{\infty}ds\nonumber\\
%& & + C\int_{t}^{t+h}(t+h-s)^{-\beta}e^{-(t-s)}\|(2-\chi u_{n}(\cdot,s)-(1-\chi)\bar{U}(\cdot,s;u_{n}))\bar{U}(\cdot,s;u_{n})\|_{\infty}ds\nonumber\\
%&\leq & C_{\beta}h^{\beta}\int_{0}^{t}(t-s)^{-\beta}e^{-(t-s)}\|(2+(1+2\chi)\bar{U}_{\mu}(\cdot,s))\bar{U}_{\mu}(\cdot,s)\|_{\infty}ds\nonumber\\
%& & + C\int_{t}^{t+h}(t+h-s)^{-\beta}e^{-(t-s)}\|(2+(1+2\chi)\bar{U}_{\mu}(\cdot,s))\bar{U}_{\mu}(\cdot,s)\|_{\infty}ds\nonumber\\
%&\leq & MC_{\beta}e^{\mu c_{\mu}T}(2+(1+2\chi)Me^{\mu c_{\mu}T})h^{\beta}\int_{0}^{t}(t-s)^{-\beta}e^{-(t-s)}ds\nonumber\\
%& & + MCe^{\mu c_{\mu}T}(2+(1+2\chi)Me^{\mu c_{\mu}T})\int_{t}^{t+h}(t+h-s)^{-\beta}e^{-(t-s)}ds\nonumber\\
&\leq& C_{\beta}(\Gamma(1-\beta)+\frac{1}{1-\beta})(3+2\chi)M^{2}e^{2\mu c_{\mu}T}(h^{\beta}+h^{1-\beta})\nonumber\\
\end{eqnarray}

Thus, it follows from inequalities \eqref{Spread-speed-eq009}, \eqref{Spread-speed-eq09} and \eqref{Spread-speed-eq10} that
\begin{equation}
\|\bar{U}(\cdot,t+h;u_{n})-\bar{U}(\cdot,t;u_{n})\|_{X^{\beta}}\leq \bar{C}_{\beta,\chi,M,T}(h^{\beta}+h^{\frac{1}{2}-\beta}+h^{1-\beta}).
\end{equation}
Which complete the proof of Claim 1. It follows from Claim 1, the fact that $X^{\beta}$ is continuously embedded in $C^{\nu}(\R)$ for $0\leq \nu<2\beta$, Arzela-Ascili's Theorem and Theorem 3.15 in \cite{Friedman}, there is a function $\bar{U}\in C^{2,1}(\R\times(0,T])$ and a subsequence $\{u_{n_1}\}_{n\geq 1}$ of $\{u_{n}\}_{n\geq 1}$ such that $\{\bar{U}(\cdot,\cdot;u_{n_{1}})\}_{n\geq 1}$ converging locally uniformly to $\bar{U}$ in $C^{2,1}(\R\times(0,T])$.

\smallskip

\noindent {\bf Claim 2 :} $\lim_{t\to 0^{+}}\bar{U}(\cdot,t)=u_{0}$ in $C_{\rm unif}^{b}(\R)$

Indeed, let $\varepsilon>0$ be fixed. There is $0<t_{\varepsilon}<T$ such that
$$
\|T(t)u_{0}-u_0\|_{\infty} +C_{\beta}M^{2}(3+2\chi)e^{2\mu c_{\mu}T}( 2\sqrt{t}+t)<\varepsilon, \ \ \forall \ t\leq t_{\varepsilon}.
$$
Thus, by taking $\beta=0$, it follows from inequality \eqref{Spread-speed-eq08} that
\begin{eqnarray*}
\|\bar{U}(\cdot,t;u_{n_{1}})-u_{0}\|_{\infty} \leq  \|T(t)u_{0}-u_0\|_{\infty} +C_{\beta}M^{2}(3+2\chi)e^{2\mu c_{\mu}T}( 2\sqrt{t}+t)< \varepsilon
\end{eqnarray*}
for every $0\ \leq t<t_{\varepsilon}$. Hence, letting $n$ goes to infinity in the last inequality, we obtain that
$$
\|\bar{U}(\cdot,t)-u_{0}\|_{\infty}\leq \varepsilon, \ \forall\ 0<t<t_{\varepsilon}.
$$
Thus Claim 2 is proved. It is clear that $\bar{U}\in\mathcal{E}_{\mu}^T(u_0)$. Thus complete the proof of step 1.

\smallskip

\noindent {\bf Step 2:} In this step, we prove that the mapping $\mathcal{E}_{\mu}^{T}(u_{0})\ni u\mapsto \bar{U}(\cdot,\cdot;u)\in \mathcal{E}_{\mu}^T(u_{0})$ is continuous.

Indeed, let $\{u_{n}\}_{n\geq 1}\in \mathcal{E}_{\mu}^{T}(u_{0}) $ and $u\in \mathcal{E}_{\mu}^{T}(u_{0})$ such that $\|u_{n}-u\|_{\ast,T}\to 0$ as $n\to \infty$. Same arguments used in the proof of Claims 1 and 2 of step 2 in the proof of Theorem \ref{existence-tv-thm}, yield that
$$
\|V(\cdot,\cdot,u_{n})-V(\cdot,\cdot,u)\|_{\infty}+\|\partial_{x}V(\cdot,\cdot,u_{n})-\partial_{x}V(\cdot,\cdot,u)\|_{\infty}\to 0 \quad \text{as}\quad n\to\infty.
$$
Suppose by contradiction that there is $\delta>0$ and a subsequence $\{u_{n_{1}}\}_{n\geq 1}$ of $\{u_{n}\}_{n\geq 1}$
such that
\begin{equation}\label{Spread-speed-eq11}
\|\bar{U}(\cdot,\cdot;u_{n_{1}})-\bar{U}(\cdot,\cdot;u)\|_{\ast,T}\geq \delta, \quad \forall n\geq 1.
\end{equation}
From the proof of step 1, we know that there is a subsequence $\{u_{n_{2}}\}_{n\geq 1}$ of $\{u_{n_{1}}\}_{n\geq 1}$ and a function $\bar{U}\in \mathcal{E}_{\mu}^{T}(u_0)\cap C^{2,1}(\R\times(0,T])$ such that $ \{\bar{U}(\cdot,\cdot,u_{n_{2}})\}_{n\geq 1}$ converges to $\bar{U}$ in $C^{2,1}_{loc}(\R\times(0, T])$ with $\lim_{t\to 0^+}\|\bar{U}(\cdot,t)-u_0\|_{\infty}=0$.  But for each $n\geq 1$, we have
$$
\partial_{t}\bar{U}(\cdot,\cdot,u_{n_{2}})=\partial_{xx}\bar{U}(\cdot,\cdot,u_{n_{2}})-\chi \partial_{x}V(\cdot,\cdot;u_{n_{2}})\partial_{x}\bar{U}(\cdot,\cdot;u_{n_{2}})+(1-\chi V(\cdot,\cdot;u_{n_{2}})-(1-\chi)\bar{U}(\cdot,\cdot;u_{n_{2}}))\bar{U}(\cdot,\cdot;u_{n_{2}})
$$
Letting n goes to infinity in this equation and using the fact that $\lim_{t\to 0^+}\|\bar{U}(\cdot,t)-u_0\|_{\infty}=0$ , we obtain
\begin{equation}\label{Spread-speed-eq12}
\begin{cases}
\partial_{t}\bar{U}(\cdot,\cdot)=\partial_{xx}\bar{U}(\cdot,\cdot)-\chi \partial_{x}V(\cdot,\cdot;u)\partial_{x}\bar{U}(\cdot,\cdot)+(1-\chi V(\cdot,\cdot;u)-(1-\chi)\bar{U}(\cdot,\cdot))\bar{U}(\cdot,\cdot;u), \ \ x\in \R, 0<t\leq T\\
\bar{U}(\cdot,0)=u_{0}
\end{cases}
\end{equation}
Since $\bar{U}(\cdot,\cdot,u)$ is the only classical solution of \eqref{Spread-speed-eq12}, then
$ \bar{U}(\cdot,\cdot)=\bar{U}(\cdot,\cdot;u)$. Hence $$\lim_{n\to \infty}\|\bar{U}(\cdot,\cdot;u_{n_{2}})-\bar{U}(\cdot,\cdot;u)\|_{\ast,T}=0,$$ which contradicts \eqref{Spread-speed-eq11} .

\smallskip

Now, by Steps 1, 2, and Schauder's fixed point Theorem, there is a function $\bar{u}\in \mathcal{E}_{\mu}^{T}(u_{0})$ such that $\bar{U}(\cdot,\cdot;\bar{u})=\bar{u}$. The function $\bar{u}$ is clearly a classical solution of the PDE
\begin{equation} \label{Spread-speed-eq13}
\begin{cases}
\bar{u}_{t}=\bar{u}_{xx}-\chi V_{x}(\cdot,\cdot;\bar{u})\bar{u}_{x}+(1-\chi V(\cdot,\cdot;\bar{u}) -(1-\chi)\bar{u})\bar{u}, \quad x\in\R, 0<t\leq T\\
\bar{u}(\cdot,0)=u_0.
\end{cases}
\end{equation}
But we know from Theorem 1.1 in \cite{SaSh} that $u(\cdot,\cdot;u_0)$ is the only classical solution of \eqref{Spread-speed-eq13}. Hence $u(\cdot,\cdot;u_0)=\bar{u}\in \mathcal{E}_{\mu}^{T}(u_0)$.
\end{proof}

We are  now ready to prove Theorem B.

\begin{proof}[Proof of Theorem B]

For given $u_0\in C_c^+(\R)$, let $c_{\rm low}^*(u_0)$ be the largest positive number such that \eqref{spreading-eq1} holds
and $c_{\rm up}^*(u_0)$ be the smallest positive number such that \eqref{spreading-eq2} holds.
From the definition of $c_-^*(\chi)$ and $c_+^*(\chi)$, we have that
\begin{equation}\label{Spread-speed-eq01}
c_-^*(\chi)=\inf\{c^*_{\rm low}(u_{0})\, | \, u_{0}\in C_{c}^{+}(\R)\}\quad \text{and}\quad c_+^*(\chi)=\sup\{c^*_{\rm up}(u_{0})\, |\, u_{0}\in C_{c}^{+}(\R)\}.
\end{equation}

(i) Assume $0<\chi<1$. We first prove that $c_+^*(\chi)\le
c^*(\chi)$.

Let $\mu^*\in (0,1)$ satisfy \eqref{mu-star-new-eq}.  By Lemma 4.3, we have that
$$
0\leq u(x,t;u_0)\leq Me^{\mu^{*}(c_{\mu^{*}}t-|x|)}, \quad \forall\ x\in\R,\ t\geq 0,
$$
where $c_{\mu^*}=\mu^*+\frac{1}{\mu^*}$. Thus for every $c>c_{\mu^{*}}$ we have that
$$
0\leq \sup_{|x|\geq ct}u(x,t;u_0)\leq Me^{\mu^{*}(c_{\mu^{*}}-c)t}\to 0 \quad \text{as}\quad t\to \infty.
$$
Since $c_{\mu^{*}}$ is independent of $u_{0}$ thus
$c_{+}^{*}(\chi)\leq c_{\mu^{*}}=\mu^{*}+\frac{1}{\mu^{*}}= c^*(\chi)$.

Next, we prove that $c_+^*(\chi)\le 2+\frac{\chi}{1-\chi}$.  We know
that $u(\cdot,t;u_{0})\leq \tilde{U}(t;\|u_{0}\|_{\infty})\to
\frac{1}{1-\chi}$ as $t\to \infty$. Let $0<\varepsilon\ll 1$ be
fixed. Thus there is $T_{\varepsilon}>0$ such that $\tilde U(t;\|u_{0}\|_{\infty})\leq \frac{1}{1-\chi} +\varepsilon$ for
every $t\geq T_{\varepsilon}$.  We obtain that
\begin{equation}\label{Spread-speed-eq14}
\|u(\cdot,t;u_{0})\|_{\infty}, \,\,\, \|v(\cdot,t;u_0)\|_\infty,\,\,\,  \|v_x(\cdot,t;u_0)\|_\infty \leq  \frac{1}{1-\chi}+\varepsilon, \quad \forall t\geq T_{\varepsilon}.
\end{equation}
Let
$$M_{\varepsilon}=\sup_{0\le t\le T_{\varepsilon}}\|v_x(\cdot,t;u_0)\|_\infty.$$
Let $M>0$ be such that
$$
u_0(x)\le M e^{-|x|}.
$$
Consider
\begin{equation}
\label{new-aux-eq1} u_t=u_{xx}-\chi M_\epsilon u_x +u(1-{ (1-\chi)
u}),\quad x\in\R.
\end{equation}
Let $u^+(x,t)=Me^{-\big(x-(2+\chi M_\epsilon)t\big)}$. It is not difficult to see that $u^+(x,t)$ a super-solution of \eqref{new-aux-eq1}. This implies together $u_x^+(t,x)<0$ implies that
$u^+(x,t)$ is a super-solution of
\begin{equation}
\label{new-aux-eq0}
u_t=u_{xx}-\chi v_x(x,t;u_0)u_x+u(1-\chi v(x,t;u_0)-(1-\chi)u),\quad x\in\R
\end{equation}
on $t\in [0,T_{\varepsilon}]$ and then
$$
u(x,t;u_0)\le Me^{-\big(x-(2+\chi M_\epsilon)t\big)}\quad x\in\R,\,\, t\in [0,T_{\varepsilon}].
$$

Now consider
\begin{equation}
\label{new-aux-eq2}
u_t=u_{xx}-\Big(\frac{\chi}{1-\chi}+\varepsilon \chi\Big)u_x+u(1-(1-\chi)u),\quad x\in\R.
\end{equation}
It is not difficult to prove that
$\tilde u^+(x,t)=Me^{-\big(x-(2+\frac{\chi}{1-\chi}+\varepsilon \chi)(t-T_{\varepsilon})\big)} e^{(2+\chi M_\varepsilon)T_{\varepsilon}}$ is a super-solution
of \eqref{new-aux-eq2} for $t\ge T_{\varepsilon}$. This together with $\tilde u^+_x(x,t)<0$ and \eqref{Spread-speed-eq14} implies that $\tilde u^+(x,t)$
is a super-solution of \eqref{new-aux-eq0} on $[T_{\varepsilon},\infty)$. It then follows that
\begin{equation}
\label{new-aux-eq3}
u(x,t;u_0)\le M e^{-\big(x-(2+\frac{\chi}{1-\chi}+\varepsilon \chi)(t-T_{\varepsilon})\big)} e^{(2+\chi M_\varepsilon)T_{\varepsilon}},\quad x\in\R,\,\, t\ge T_{\varepsilon}.
\end{equation}

Similarly, we can prove that
\begin{equation}
\label{new-aux-eq4}
u(x,t;u_0)\le M e^{\big(x+(2+\frac{\chi}{1-\chi}+\varepsilon \chi)(t-T_{\varepsilon})\big)} e^{(2+\chi M_\varepsilon)T_{\varepsilon}},\quad x\in\R,\,\, t\ge T_{\varepsilon}.
\end{equation}
By \eqref{new-aux-eq3} and \eqref{new-aux-eq4}, we have
$$
c_+^*(\chi)\le 2+\frac{\chi}{1-\chi}+\varepsilon \chi.
$$
Letting $\varepsilon \to 0$, we get $c_+^*(\chi)\le
2+\frac{\chi}{1-\chi}$. Then (i) follows.

\smallskip

\smallskip

(ii)  Assume that $0<\chi<\frac{2}{3+\sqrt 2}$. We claim that
$$0< \underline{c}_{\chi}:=2\sqrt{1-\frac{\chi}{1-\chi}}-\frac{\chi}{1-\chi}\leq c_{\rm low}^*(u_{0})$$
 for every $u_{0}\in C^{+}_{c}(\R)$.

Indeed, let $u_{0}\in C_{c}^{+}(\R)$ be fixed. It follows from \cite[Lemmas 5.3 and  5.4]{SaSh} and the proof of \cite[Theorem 1.9 (i)]{SaSh} that
\begin{equation}\label{Spread-speed-eq15}
c_{\rm low}^*(u_{0})\geq \lim_{R\to\infty}\inf_{|x|\ge R, t\ge R}\Big[2\sqrt{1-\chi v(x,t;u_{0})}-\chi |\partial_{x}v(x,t;u_{0}))| \Big]>0.
\end{equation}
Since $0<\chi<\frac{2}{3+\sqrt{2}}$, then we have that $\underline c_{\chi}>0$. Inequality \eqref{Spread-speed-eq14} combined  with the fact that $\max\{\|v(\cdot,t;u_{0})\|_{\infty},\|v_{x}(\cdot,t;u_{0})\|_{\infty}\}\leq \|u(\cdot,t;u_{0})\|$ for every $t>0$, yield that
\begin{equation*}
2\sqrt{1-\chi v(x,t;u_{0})} -\chi|v_{x}(x,t;u_{0})|\geq 2\sqrt{1-\chi(\frac{1}{1-\chi}+\varepsilon)}-\chi(\frac{1}{1-\chi}+\varepsilon),\quad  t\geq T_{\varepsilon}, \ |x|\geq  T_{\varepsilon}.
\end{equation*}
Therefore
\begin{equation}\label{Spread-speed-eq16}
\lim_{R\to \infty}\inf_{|x|\geq R,t\geq R}2\sqrt{1-\chi v(x,t;u_{0})} -\chi|v_{x}(x,t;u_{0})|\geq 2\sqrt{1-\chi(\frac{1}{1-\chi}+\varepsilon)}-\chi(\frac{1}{1-\chi}+\varepsilon),\quad \forall\ 0<\varepsilon\ll 1.
\end{equation} Letting $\varepsilon$ goes to $0$ in \eqref{Spread-speed-eq16} and using \eqref{Spread-speed-eq15}, we obtain that
$$c^*_{\rm low}(u_{0})\geq \underline c_{\chi} .$$
We  then have $c_-^*(\chi)\ge \underline c_{\chi}$ and (ii) follows.
\end{proof}

\section{Spreading speeds and traveling waves on $\R^N$}

In this section, we consider the spatial spreading speeds and traveling wave solutions of \eqref{IntroEq0-2-new} with $N\ge 1$
and prove Theorems C and D. The proofs are based the ideas used in the proofs of Theorems A and B and some results in Theorems A and B.
We will skip the details of those arguments which are similar to some arguments in Theorems A and B.

\medskip

Throughout this section, we assume that $0<\chi<1$.
We call an entire solution $(u(x,t),v(x,t))$ of \eqref{IntroEq0-2-new}
a {\it traveling wave solution} of \eqref{IntroEq0-2-new}which connects $(1,1)$ and $(0,0)$ and propagates in the direction of $\xi\in S^{N-1}$ with speed
$c$ if there is $(\Phi(\cdot),\Psi(\cdot))\in C_{\rm unif}^b(\R)\times C_{\rm unif}^b(\R)$ such that $(u(x,t),v(x,t))=(\Phi(x\cdot\xi-ct),\Psi(x\cdot \xi-ct))$ and $(\Phi(-\infty),\Psi(-\infty))=(1,1)$, $(\Phi(\infty),\Psi(\infty))=(0,0)$.

\medskip

For given $x=(x_1,x_2,\cdots,x_N)\in\R^N$, let $|x|=\sqrt{x_1^2+x_2^2+\cdots+x_N^2}$.
Let
$$
C_c^+(\R^N)=\{u\in C_{\rm unif}^b(\R^N)\,|\, u(x)\ge 0,\,\, {\rm supp}(u)\,\,\, \text{is non-empty and compact}\}.
$$
Let
$$
C_{-}^*(\chi)=\{c^*_->0\,|\,
 \lim_{t\to\infty} \sup_{|x|\le ct} \big[|u(x,t;u_0)-1|+|v(x,t;u_0)-1|\big]=0\quad \forall\,\, u_0\in C_c^+(\R^N),\,\, \forall\,  0<c<c_{-}^*\}
 $$
 and
 $$
 C_+^*(\chi)=\{c^*_+>0\,|\,
 \lim_{t\to\infty}\sup _{|x|\ge ct} \big[ u(x,t;u_0)+v(x,t;u_0)\big]=0\quad \forall\,\, u_0\in C_c^+(\R^N),\,\, \forall\, c>c_{+}^*\}.
 $$
 Let
 $$
 c_{-}^*(\chi)=\sup\{c\in C_-^*(\chi)\}\quad {\rm and}\quad c_+^*(\chi)=\inf\{c\in C_+^*(\chi)\},
 $$
 where $c_-^*(\chi)=0$ if $C_-^*(\chi)=\emptyset$ and $c_+^*(\chi)=\infty$ if $C_+^*(\chi)=\emptyset$.
We call $[c_-^*(\chi),c_+^*(\chi)]$ the {\it spreading speed
interval } of \eqref{IntroEq0-2-new}.
\medskip

\begin{proof}[Proof of Theorem C]  Assume that $0<\chi<\frac{1}{2}$ and  that $c^*(\chi)$ is as in Theorem A.
For given $c\ge c^*(\chi)$, let $(u,v)=(U(x-ct),V(x-ct))$ be the traveling wave solution of \eqref{IntroEq0-2} connecting $(1,1)$ and $(0,0)$ with speed $c$.
It is then easy to verify that
$$(u(x,t),v(x,t)):=(U(x\cdot\xi-ct),V(x\cdot\xi-ct))
$$
is a traveling wave solution of \eqref{IntroEq0-2-new} which connects $(1,1)$ and $(0,0)$ and propagates in the direction of $\xi\in S^{N-1}$ with speed
$c$. This proves Theorem C.
\end{proof}

\begin{proof}[Proof of Theorem D]
(i)
Fix $u_{0}\in C_{c}^{+}(\R^N)$ and $0< \chi <1$.
 Let $R\gg 1$ such that ${\rm supp}(u_{0})\subset[-R, R]^N
 $.
For every $T>0$ and $\mu\in(0, 1)$,  we define
$$
\mathcal{E}^T_{\mu}(u_{0}):=\{ u\in C_{\rm unif}^{b}(\R^N\times[0,T])\, | \, 0\leq u\leq \bar{U}_{\mu}\ \text{and}\ u(\cdot,0)=u_0\},
$$
$$
\bar{U}_{\mu}(x,t):=Me^{-\mu\big(|x|- c_{\mu}t\big)}, \, \forall \ x\in\R^N,\ t\ge 0,
$$
and $$M:= \max\{\frac{e^{R}}{1-\chi},e^{R}\|u_{0}\|_{\infty}\}. $$
Observe that $M$ is independent of $T$,  $\bar{U}_{\mu}(x,0)\geq u_{0}(x)$ for every $x\in \R$, and
\begin{equation}\label{general-spreading-eq1}
\partial_{t}\bar{U}_\mu(x,t)-\Delta \bar{U}_\mu(x,t)-\bar{U}_\mu(x,t)=\frac{\mu(N-1)\bar{U}_{\mu}(x,t)}{|x|}\ge 0, \quad \forall\ x\neq 0, \ t\ge 0.
\end{equation}

For given $u\in \mathcal{E}_{\mu}^{T}(u_0)$, let  $V(x,t;u)$ be the solution of the second equation in \eqref{IntroEq0-2-new}. Note that
 \begin{equation}\label{general-spraeding-eq4}
V(x,t;u)=\int_{0}^{\infty}\int_{\R^N}\frac{e^{-s}}{(4\pi s)^{\frac{N}{2}}}e^{-\frac{|x-z|^{2}}{4s}}u(z,t)dzds.
\end{equation}

\noindent{\bf Claim 1.} {\it  For every { $0<\mu<\frac{1}{\sqrt N}$ and} for every $u\in \mathcal{E}_{\mu}^{T}(u_{0})$ we have that
\begin{equation}\label{general-spraeding-eq2}
V(\cdot,\cdot;u)\leq { \frac{2^{N}}{1-N\mu^2}}\bar{U}_\mu(\cdot,\cdot)
\end{equation}
and
\begin{equation}\label{general-spraeding-eq3}
|\partial_{x_{i}}V(\cdot,\cdot;u)|\leq { \frac{2^{N}(\mu+\sqrt{1-N\mu^2})}{1-N\mu^{2}}}\bar{U}_{\mu}(\cdot,\cdot).
\end{equation}
}

The claim can be proved by the arguments similar to those in Lemma \ref{Spread-speed-v-v'-est}. We provide some indication of the proof in the following.

Using the fact that $|a+b|\geq |a|-|b| $ for every $a, b\in\R^N$, we obtain that
\begin{eqnarray*}
\int_{0}^{\infty}\int_{\R^N}\frac{e^{-s}}{(4\pi s)^{\frac{N}{2}}}e^{-\frac{|x-y|^2}{4s}}\bar{U}_{\mu}(y,t)dyds& = & \frac{1}{\pi^{\frac{N}{2}}}\int_{0}^{\infty}\int_{\R^N}e^{-s}e^{-|y|^2}\bar{U}_{\mu}(x+2\sqrt{s}y,t)dyds\nonumber\\
&= &\frac{e^{\mu c_{\mu}t}}{\pi^{\frac{N}{2}}}\int_{0}^{\infty}\int_{\R^N}e^{-s}e^{-|y|^2}e^{-\mu|x+2\sqrt{s}y|}dyds\nonumber\\
&\leq &\frac{\bar{U}_{\mu}(x,t)}{\pi^{\frac{N}{2}}}\int_{0}^{\infty}\int_{\R^N}e^{-s}e^{-|y|^2}e^{2\mu \sqrt{s}|y|}dyds.\nonumber\\
&\leq &\frac{\bar{U}_{\mu}(x,t)}{\pi^{\frac{N}{2}}}\int_{0}^{\infty}\int_{\R^N}e^{-s}e^{-|y|^2}e^{2\mu \sqrt{s}\sum_{i}^{N}|y_i|}dyds.\nonumber\\
&=& \frac{\bar{U}_{\mu}(x,t)}{\pi^{\frac{N}{2}}}\int_{0}^{\infty}e^{-s}\Pi_{i=1}^{N}\Big[\int_{\R}e^{-(|y_{i}|-\mu \sqrt{s})^2+\mu^{2}s}dy_{i}\Big]ds.
\end{eqnarray*}
Hence
\begin{eqnarray}
\label{general-spraeding-eq5}
\int_{0}^{\infty}\int_{\R^N}\frac{e^{-s}}{(4\pi s)^{\frac{N}{2}}}e^{-\frac{|x-z|^2}{4s}}\bar{U}_{\mu}(z,t)dzds& \le& \frac{\bar{U}_{\mu}(x,t)}{\pi^{\frac{N}{2}}}\int_{0}^{\infty}e^{-s}\Pi_{i=1}^{N}\Big[2e^{\mu^2 s}\int_{0}^{\infty}e^{-(y_{i}-\mu \sqrt{s})^2}dy_{i}\Big]ds.\nonumber\\
&\leq & \frac{2^{N}\pi^{\frac{N}{2}}\bar{U}_{\mu}(x,t)}{\pi^{\frac{N}{2}}}\int_{0}^{\infty}e^{-(1-N\mu^{2})s}ds\nonumber\\
&=& \frac{2^{N}}{1-N\mu^2}\bar{U}_{\mu}(x,t).
\end{eqnarray}
Since $V(\cdot,\cdot;u)\leq V(\cdot,\cdot;\bar{U}_{\mu})$, hence inequality \eqref{general-spraeding-eq2} follows from \eqref{general-spraeding-eq5}.

\smallskip

For every $x\in\R^N$, $t>0$ and $i=1,\cdots,N$, we have that
\begin{align}\label{general-spraeding-eq6}
\partial_{x_{i}}V(x,t)&=\int_{0}^{\infty}\int_{\R^N}\frac{(y_{i}-x_{i})}{2s(4\pi s)^{\frac{N}{2}}}e^{-s}e^{-\frac{|x-y|^2}{4s}}u(y,t)dyds\nonumber\\
&=\frac{1}{\pi^{\frac{N}{2}}}\int_{0}^{\infty}\int_{\R^N}\frac{y_{i}}{\sqrt{s}}e^{-s}e^{-|y|^2}u(x+2\sqrt{s}y,t)dyds\nonumber\\
&\le \frac{1}{\pi^{\frac{N}{2}}}\int_{0}^{\infty}\int_{\R^N}\frac{|y_{i}|}{\sqrt{s}}e^{-s}e^{-|y|^2}\bar{U}_{\mu}(x+2\sqrt{s}y,t)dyds.
\end{align}
Observe that
\begin{align}\label{general-spraeding-eq7}
\int_{0}^{\infty}\int_{\R^N}\frac{|y_{i}|}{\sqrt{s}}e^{-s}e^{-|y|^2}\bar{U}_{\mu}(x+2\sqrt{s}y,t)dyds &\leq \bar{U}_{\mu}(x,t)\int_{0}^{\infty}\int_{\R^N}\frac{|y_{i}|}{\sqrt{s}}e^{-s}e^{-|y|^2}e^{2\mu\sqrt{s}|y|}dyds \nonumber \\
&\leq  \bar{U}_{\mu}(x,t)\int_{0}^{\infty}\int_{\R^N}\frac{|y_{i}|}{\sqrt{s}}e^{-s}e^{-(\sum_{j=1}^{N}(|y_{j}|^2-2\mu\sqrt{s}|y_{j}|)}dyds  \nonumber\\
&=  \bar{U}_{\mu}(x,t)\int_{0}^{\infty}\int_{\R^N}\frac{|y_{i}|}{\sqrt{s}}e^{-(1-N\mu^2)s}e^{-(\sum_{j=1}^{N}(|y_{j}|-\mu\sqrt{s})^{2}}dyds  \nonumber\\
&\leq  (4\pi)^{\frac{N-1}{2}}\bar{U}_{\mu}(x,t)\int_{0}^{\infty}\int_{\R}|y_{i}|e^{-(|y_{i}|-\mu\sqrt{s})^{2}}\frac{e^{-(1-N\mu^2)s}}{\sqrt{s}}dy_{i}ds  \nonumber\\
& \le  2(4\pi)^{\frac{N-1}{2}}\bar{U}_{\mu}(x,t)\int_{0}^{\infty}\frac{e^{-(1-N\mu^2)s}}{\sqrt{s}}\Big[1+\mu\sqrt{\pi s}\Big]ds  \,\,\,\,\, ({\rm by}\,\, \eqref{general-spraeding-eq7-new})\nonumber\\
&=  2(4\pi)^{\frac{N-1}{2}}\Big(\frac{\sqrt{\pi}}{\sqrt{1-N\mu^2}}+\frac{\mu\sqrt{\pi}}{1-N\mu^2}\Big)\bar{U}_{\mu}(x,t)
\end{align}
Combining \eqref{general-spraeding-eq6} and \eqref{general-spraeding-eq7}, we obtain \eqref{general-spraeding-eq3}. Hence the claim is proved.

\smallskip

{ For every $u\in \mathcal{E}_{\mu}^{T} $, let $\bar{U}(\cdot,\cdot;u)$ be the solution of the Initial Value Problem
\begin{equation}
\begin{cases}
\partial_{t}\bar{U}=\bar{\mathcal{L}}(\bar{U}), \quad x\in\R^N,\ t>0,\cr
\bar{U}(\cdot,0;u)=u_{0}
\end{cases}
\end{equation}
where
$$
\bar{\mathcal{L}}(\bar{U}):=\Delta \bar{U}-\chi\nabla V(\cdot,\cdot;u)\cdot \nabla\bar{U}+(1-\chi V(\cdot,\cdot;u)-(1-\chi)\bar{U})\bar{U}.
$$
}

\smallskip

\noindent {\bf Claim 2.} {\it Let $\mu^{*}_N\in (0, \frac{1}{\sqrt{N}})$ satisfy
\begin{equation}\label{general -equ-of-c*}
\frac{2^{N}\sqrt{N}\mu^*_N(\mu^*_N +\sqrt{1-N\mu_N^{*2}}) }{1-N\mu_N^{*2}}=\frac{1-\chi}{\chi} .
\end{equation}
Then for every  $u\in\mathcal{E}_{\mu^*_N}^{T}(u_0)$ we have that
\begin{equation}\label{general-spraeding-eq8}
0\leq \bar{U}(\cdot,\cdot;u)\leq \bar{U}_{\mu^*_N}(\cdot,\cdot).
\end{equation}
}

\medskip
The claim can be proved by the arguments similar to those in Lemma \ref{Spread-speed-sup-sol}. In the following, we provide some
indication of the proof.

Note that the function $(0\ , \frac{1}{\sqrt{N}})\ni \mu\mapsto \frac{2^{N}\sqrt{N}\mu(\mu +\sqrt{1-N\mu^{2}}) }{1-N\mu^{2}} $ is strictly increasing,  continuous and satisfies
$$\lim_{\mu\to 0^+}\frac{2^{N}\sqrt{N}\mu(\mu +\sqrt{1-N\mu^{2}}) }{1-N\mu^{2}}=0\quad \text{and} \quad \lim_{\mu\to \frac{1}{\sqrt{N}}^-}\frac{2^{N}\sqrt{N}\mu(\mu +\sqrt{1-N\mu^{2}}) }{1-N\mu^{2}}=\infty.$$
Hence the intermediate value theorem gives the existence of $\mu^*_N$, which is unique. Next, since $u_{0}\geq 0$, comparison principle for parabolic equations implies that $\bar{U}(\cdot,\cdot;u)\geq 0$.
Observe that
$$
\partial_{t}\bar{U}(\cdot,\cdot;u)\leq \Delta \bar{U}(\cdot,\cdot;u)- \chi\nabla V(\cdot,\cdot;u)\cdot \nabla \bar{U}(\cdot,\cdot;u) +(1-(1-\chi)\bar{U}(\cdot,\cdot;u))\bar{U}(\cdot,\cdot;u).
$$
Thus, comparison principle implies that  $\bar{U}(\cdot,t;u)\leq \tilde {U}(t;\|u_0\|_{\infty})$, where $\tilde U(t;\|u_0\|_\infty)$ is
the solution of \eqref{new-ode-eq} with $\tilde U(0;\|u_0\|_\infty)=\|u_0\|_\infty$. Hence
\begin{equation}\label{general-spraeding-eq9}
\bar{U}(0,t;u)\leq \tilde U(t;\|u_0\|_{\infty})\leq \max\{\frac{1}{1-\chi},\|u_0\|_{\infty}\}\leq \bar{U}_{\mu^*}(0,t) \ \forall\ t\geq 0.
\end{equation}
If we restrict $\bar{U}_{\mu^*}$ on $\R^{N}\setminus\{0\}$, using \eqref{general-spreading-eq1},  we obtain that
\begin{eqnarray}\label{general-spraeding-eq10}
\partial_{t}\bar{U}_{\mu^*_N}-\bar{\mathcal{L}}(\bar{U}_{\mu^*_N})& = &\frac{\mu(N-1)}{|x|}\bar{U}_{\mu^*_N}+\mu\chi <\nabla V(\cdot,\cdot;),\frac{x}{|x|}>\bar{U}_{\mu^*_N}+(\chi V(\cdot,\cdot;u)+(1-\chi)\bar{U}_{\mu^*_N})\bar{U}_{\mu^*_N}\nonumber\\
&\geq & -\mu\chi |\nabla V(\cdot,\cdot;u)|\bar{U}_{\mu^*_N}  +(\chi V(\cdot,\cdot;u)+(1-\chi)\bar{U}_{\mu^*_N})\bar{U}_{\mu^*_N}\nonumber\\
&\geq & -\frac{2^{N}\sqrt{N}\mu^*_N\chi(\mu^*_N +\sqrt{1-N(\mu^{*}_N)^2}}{1-N\mu^{*2}})\Big[\bar{U}_{\mu^*_N}\Big]^{2}  +(1-\chi)\Big[\bar{U}_{\mu^*_N}\Big]^{2} \nonumber\\
& =& \chi \underbrace{\Big[\frac{2^{N}\sqrt{N}\mu^*_N(\mu^*_N
+\sqrt{1-N(\mu^{*}_N)^2}) }{1-N(\mu^{*}_N)^2}-\frac{1-\chi}{\chi}
\Big]}_{=0}\Big[\bar{U}_{\mu^*_N}\Big]^{2}.
\end{eqnarray}
Combining inequalities \eqref{general-spraeding-eq9}, \eqref{general-spraeding-eq10} with the fact that $u_0\leq \bar U_{\mu^*_N}(\cdot,0)$, thus comparison principle for parabolic equations implies that $\bar{U}(\cdot,\cdot;u)\leq \bar U_{\mu^*_N}(\cdot,\cdot)$, which complete the proof of the claim.

\medskip

\noindent {\bf Claim 3.} {\it  For any given $u_0\in C_c^+(\R^N)$ and  $T>0$,  $u(\cdot,\cdot;u_0)\in
\mathcal{E}^T_{\mu}(u_{0})$ with $\mu=\mu^*_N$.
}

\medskip

The claim can be proved by the arguments similar to those in Lemma
\ref{spraeding-speed-important-lm}. We provide some indication of
the proof in the following. We put $\mu=\mu^*_N$ in the following.

Consider the normed linear space $\mathcal{E}^{T}=C^{b}_{\rm unif}(\R^N\times[0,T])$ endowed with the norm
 $$
 \|u\|_{*,T}=\sum_{n=1}^{\infty}\|u\|_{L^{\infty}(\bar{B}(0,N))\times[0\ ,\ T]}
 $$
 where $\bar{B}(0,n)$ denotes the closed ball centered at the origin in $\R^N$ with radius $n$. For every $u\in\mathcal{E}_{\mu}^{T}(u_{0})$ we have that $\|u\|_{\ast,T}\leq M e^{\mu c_{u}T}. $
Hence $\mathcal{E}_{\mu}^{T}(u_{0})$ is a bounded convex subset of $\mathcal{E}^{T}$. Since the convergence in $\mathcal{E}^{T}$ implies the pointwise convergence, then $\mathcal{E}_{\mu}^{T}(u_{0})$ is a closed, bounded, and convex subset of $\mathcal{E}^T$. Furthermore, a sequence of functions in $\mathcal{E}^T_{\mu}(u_{0})$ converges with respect to norm $\|\cdot\|_{\ast,T}$ if and only if it  converges locally uniformly on $\R^N\times[0, T]$.

By Claim 2,  every  $u\in\mathcal{E}_{\mu}^{T}(u_0)$,
$\bar{U}(\cdot,\cdot;u)\in\mathcal{E}_{\mu}^T(u_{0})$. We prove the
Claim 3 by showing that $u(\cdot,\cdot;u_0)$ is a fixed point of the
mapping $ \mathcal{E}_{\mu}^T(u_{0})\ni u\mapsto
\bar{U}(\cdot,\cdot;u)\in\mathcal{E}_{\mu}^T(u_{0})$ and divide the
proof into two steps.

\smallskip

\noindent {\bf Step 1.} In this step, we prove that the mapping
$\mathcal{E}^T_{\mu}(u_0)\ni u\mapsto \bar{U}(\cdot,\cdot;u)\in
\mathcal{E}^T_\mu(u_0)$ is compact.

Indeed, let $\{u_{n}\}_{n\geq 1}\subset \mathcal{E}_{\mu^*_N}^{T}(u_0)$ be given. For every $n\geq 1$, $\bar{U}(\cdot,\cdot;u_n)$ satisfied
$$
\begin{cases}
\partial_{t}\bar{U}(\cdot,\cdot,u_{n})=\Delta\bar{U}(\cdot,\cdot,u_{n})-\chi \nabla V(\cdot,\cdot;u_{n})\cdot\nabla \bar{U}(\cdot,\cdot;u_{n})+(1-\chi V(\cdot,\cdot;u_{n})-(1-\chi)\bar{U}(\cdot,\cdot;u_n))\bar{U}(\cdot,\cdot;u_{n})\\
\bar{U}(\cdot,0;u_n)=u_{0}
\end{cases}
$$

Taking $\{T(t)\}_{t\geq 0}$ to be the analytic semigroup generated by $A:=(\Delta-I)$ on $C^{b}_{\rm unif}(\R^N)$, the variation of constant formula and similar arguments to the one used to establish \eqref{variation -of-const} yield that for every $t\geq 0$,
\begin{eqnarray}\label{general-spraeding-eq11}
\bar{U}(\cdot,t;u_n)
& =& T(t)u_{0} -\chi\int_{0}^{t}T(t-s)\nabla\cdot \Big( \nabla V(\cdot,s;u_{n})\bar{U}(\cdot,s;u_{n})\Big)ds\nonumber\\
& &+\int_{0}^{t}T(t-s)((2-\chi u_{n}(\cdot,s)-(1-\chi)\bar{U}(\cdot,s;u_n))\bar{U}(\cdot,s;u_{n}))ds.
\end{eqnarray}
Observe that formula \eqref{general-spraeding-eq11} is equivalent to formula \eqref{spraeding-speed-important-lm} in step 1 of the proof of Lemma \ref{Spread-speed-eq07}. Same arguments used in step 1 of the proof of Lemma \ref{Spread-speed-eq07} apply to this case as well. Thus the sequence $\{\bar{U}(\cdot,\cdot;u_n)\}_{n\geq 1}$ has a subsequence that converges in $\mathcal{E}_{\mu}^{T}$. Thus the mapping $\mathcal{E}^T_\mu(u_0)\ni u\mapsto \bar{U}(\cdot,\cdot;u)\in \mathcal{E}^T_\mu(u_0)$ is compact.

\smallskip

{\bf Step 2.}  In this step, we prove that the mapping
$\mathcal{E}^T_{\mu}(u_0)\ni u\mapsto \bar{U}(\cdot,\cdot;u)\in
\mathcal{E}^T_\mu(u_0)$ is continuous.

 Using similar arguments as in the proof of Step 2 of Lemma \ref{spraeding-speed-important-lm}, we have that the mapping $\mathcal{E}^T_{\mu}(u_0)\ni u\mapsto \bar{U}(\cdot,\cdot;u)\in \mathcal{E}^T_\mu(u_0)$ is continuous.

  Schauder's fixed point theorem implies that the mapping $\mathcal{E}^T_{\mu}(u_0)\ni u\mapsto \bar{U}(\cdot,\cdot;u)\in \mathcal{E}^T_\mu(u_0)$ has a fixed point say $\bar{U}$.
   The fixed solves \eqref{IntroEq0-2-new}. Thus Therefore Theorem 1.1 in \cite{SaSh} implies that $u(\cdot,\cdot;u_0)=\bar{U}$. Hence $u(\cdot,\cdot;u_0)\in \mathcal{E}_{\mu}^{T}$.

\medskip

Now, we prove (i). First, it follows from Claim 3, that
$$
0\leq u(x,t;u_0)\leq Me^{\mu^*_N(c_{\mu^*}t-|x|)} \quad \forall\ x\in\R^N, \ t\ge 0,
$$
where $\mu^*_N$ is given by \eqref{general -equ-of-c*}. Thus for every $c>c_{\mu^*_N}$ we have that
$$
0\leq \sup_{|x|\geq c t}\leq Me^{\mu^*_N(c_{\mu^*_N}-c)t}\to 0 \quad\text{as}\quad t\to \infty.
$$
Since $c_{\mu^*_N}$ is independent of $u_{0}$ then $c_{+}^*(\chi)\leq c_{\mu^*_N}$.

Next, we know that  $u(\cdot,\cdot;u_{0})\leq \tilde
{U}(t;\|u_{0}\|_{\infty})\to \frac{1}{1-\chi}$ as $t\to \infty$,
where $\tilde U(t;\|u_0\|_\infty)$ is the solution of
\eqref{new-ode-eq} with $\tilde U(0;\|u_0\|_\infty)=\|u_0\|_\infty$.
Let $0<\varepsilon\ll 1$ be fixed. Thus there is $T_{\varepsilon}>0$
such that $\tilde U(t;\|u_{0}\|_{\infty})\leq \frac{1}{1-\chi}
+\varepsilon$ for every $t\geq T_{\varepsilon}$. We obtain that
\begin{equation}\label{Spread-speed-general-eq14}
\|u(\cdot,t;u_{0})\|_{\infty}, \,\,\, \|v(\cdot,t;u_0)\|_\infty,\,\,\,  \|v_{x_i}(\cdot,t;u_0)\|_\infty \leq  \frac{1}{1-\chi}+\varepsilon, \quad \forall t\geq T_{\varepsilon}.
\end{equation}
Let
$$M_{\varepsilon}=\sup_{0\le t\le T_{\varepsilon}}\|\nabla v (\cdot,t;u_0)\|_\infty.$$
Let $M>0$ be such that
$$
u_0(x)\le M e^{-|x|}.
$$

For any given $\xi\in S^{N-1}$, consider
\begin{equation}
\label{new-aux-eq5}
u_t=\Delta u-\chi M_\epsilon  \xi\cdot\nabla u +u(1-(1-\chi) u),\quad x\in\R^N.
\end{equation}
It is not difficult to show that $u^+(x,t)=Me^{-\big(x\cdot\xi-(2+\chi M_\epsilon)t\big)}$ is a super-solution of \eqref{new-aux-eq5}. This implies that
$u^+(x,t)$ is a super-solution of
\begin{equation}
\label{new-aux-eq00}
u_t=\Delta u-\chi \nabla v(x,t;u_0)\cdot \nabla u +u(1-\chi v(x,t;u_0)-(1-\chi)u),\quad x\in\R^N
\end{equation}
on $t\in [0,T_{\varepsilon}]$ and then
$$
u(x,t;u_0)\le M e^{-\big(x\cdot\xi -(2+\chi M_\epsilon)t\big)}\quad \xi\in S^{N-1},\,\, x\in\R,\,\, t\in [0,T_{\varepsilon}].
$$

Now for any given $\xi\in S^{N-1}$, consider
\begin{equation}
\label{new-aux-eq6}
u_t=\Delta u-\Big(\frac{\sqrt{N}\chi}{1-\chi}+\varepsilon \sqrt{N} \chi\Big)\xi\cdot \nabla u +u(1-(1-\chi)u),\quad x\in\R^N.
\end{equation}
It is not difficult to show that
$\tilde u^+(x,t)=Me^{-\big(x\cdot\xi -(2+\frac{ \sqrt{N}\chi}{1-\chi}+\varepsilon \sqrt{N} \chi)(t-T_{\varepsilon})\big)} e^{(2+\chi M_\varepsilon)T_{\varepsilon}}$ is a super-solution
of \eqref{new-aux-eq6} for $t\ge T_{\varepsilon}$. This together with and \eqref{Spread-speed-general-eq14} implies that $\tilde u^+(x,t)$
is a super-solution of \eqref{new-aux-eq00} on $[T_\varepsilon,\infty)$. It then follows that
\begin{equation}
\label{new-aux-eq7}
u(x,t;u_0)\le M e^{-\big(x\cdot\xi -(2+\frac{ \sqrt{N}\chi}{1-\chi}+\varepsilon  \sqrt{N} \chi)(t-T_{\varepsilon})\big)} e^{(2+\chi M_\varepsilon)T_{\varepsilon}},\quad x\in\R,\,\,\xi\in S^{N-1},\,\,  t\ge T_{\varepsilon},
\end{equation}
which implies that
$$
c_+^*(\chi)\le 2+\frac{\sqrt{N}\chi}{1-\chi}+\varepsilon \sqrt{N}\chi.
$$
Letting $\varepsilon \to 0$, we get
$$
c_+^*(\chi)\le 2+\frac{\sqrt{N}\chi}{1-\chi}.
$$
(i) then follows.

\medskip

(ii) If $0<\chi<\frac{2}{3+\sqrt{N+1}}$, the proof of  the uniform lower bound for $c^*_{\rm low}(u_0)$ when $N=1$ also apply to the general case.
\end{proof}

\end{document}